\newcommand{\RR}{{\mathbb R}}
\newcommand{\EE}{{\mathbb E}}
\newcommand{\ee}{{\tilde{e}}}
\newcommand{\LL}{{\mathbb L}}
\newcommand{\PP}{{\mathbb P}}
\renewcommand{\AA}{{\tilde{A}}}
\newcommand{\QQ}{{\tilde{Q}}}
\newcommand{\WW}{{\tilde{W}}}
\newcommand{\GG}{{\tilde{G}}}
\newcommand{\KK}{{\tilde{K}}}
\renewcommand{\SS}{{\tilde{S}}}
\numberwithin{equation}{section}
\newtheorem{theorem}{Theorem}[section]
\newtheorem{lemma}[theorem]{Lemma}
\newtheorem{prop}[theorem]{Proposition}
\newtheorem{coro}[theorem]{Corollary}
\begin{document}
\title[Strong Rate of Convergence of a  time Euler scheme for 2D Boussinesq]
{Rate of convergence of a semi-implicit time Euler \\scheme 
  for a 2D B\'enard-Boussinesq  model} 

\author[H. Bessaih]{Hakima Bessaih}
\address{Florida International University, Mathematics and Statistics Department, 11200 SW 8th Street, Miami, FL 33199, United States}
\email{hbessaih@fiu.edu}

\author[A. Millet]{ Annie Millet}
\address{SAMM, EA 4543,
Universit\'e Paris 1 Panth\'eon Sorbonne, 90 Rue de
Tolbiac, 75634 Paris Cedex France {\it and} Laboratoire de
Probabilit\'es, Statistique et Mod\'elisation, UMR 8001, 
  Universit\'es Paris~6-Paris~7} 
\email{annie.millet@univ-paris1.fr}

\subjclass[2000] { Primary  60H15, 60H35, 65M12; Secondary 76D03, 76M35.}

\keywords{B\'enard model, Boussinesq model, implicit time Euler schemes, strong convergence, exponential moments}

\begin{abstract}
 We prove that a semi-implicit time Euler scheme for the two-dimensional B\'enard-Boussinesq  model on the torus $D$ converges.
 The rate of convergence in probability is almost 1/2 for a multiplicative noise; this  relies on moment estimates in various norms for the processes and the scheme.
 In case of an additive noise, due to the coupling of the equations, provided that the difference on temperature between the top and bottom parts of the torus 
 is not too big compared to the viscosity and
 thermal diffusivity,  a strong polynomial rate of convergence (almost 1/2) is proven in  $(L^2(D))^2$ for the velocity  and    in $L^2(D)$ for the temperature.
  It depends on exponential moments of the scheme; due to linear terms involving the other quantity in both evolution equations, the proof has to be done simultaneaously for 
  both the velocity and the temperature. These rates in both cases are similar to that obtained for the Navier-Stokes equation. 
\end{abstract}

\maketitle


\section{Introduction}\label{s1} \smallskip

The Boussinesq equations have been used as a model in many geophysical applications. They have been widely studied in a both
 the deterministic and stochastic settings. 
 We take random forcings into account and formulate the B\'enard convection problem as a system of stochastic partial differential equations (SPDEs). 
 The need to take stochastic effects into account for modeling complex systems has now become widely recognized. Stochastic 
partial differential equations (SPDEs) arise naturally as mathematical models for nonlinear macroscopic dynamics under random influences.
 It is  a coupled system of the stochastic Navier–Stokes equations and the stochastic transport equation for temperature.
 For us this system will be subject to
an additive random perturbation which will be defined later. 
Here, $u$ describes the fluid velocity field, while ${\mathcal T}$ describes the temperature  of the buoyancy  driven  fluid, and $\pi$ is the  fluid's  pressure.

We study the  B\'enard-Boussinesq  equations on the torus $D=[0,L]^2$ subject to a multiplicative (an additive) stochastic perturbation. 
Unlike \cite{BeMi_Bou} we impose different values of the temperature ${\mathcal T}$ on the top and bottom parts of the torus,  that is ${\mathcal T}(t,.)={\mathcal T}_0$ on $\{x_2=0\}$
and ${\mathcal T}(t,.)={\mathcal T}_L$ on $\{x_2=L\}$ for $t\in [0,T]$. 
 The fluid's velocity $u$, temperature ${\mathcal T}$ and pressure $\pi$   satisfy the following B\'enard problem  (see e.g. \cite{Temam-88}  and \cite{Fer}):
\begin{align} \label{velocity-mul}
 \partial_t u - \nu \Delta u + (u\cdot \nabla) u + \nabla{\pi} & = ({\mathcal T}-T_L) v_2 + G(u) \,  dW\quad \mbox{\rm in } \quad (0,T)\times D,\\
 \partial_t {\mathcal T} - \kappa \Delta {\mathcal T}  + (u\cdot \nabla {\mathcal T}) &= {\mathcal G} ({\mathcal T})\,  d\tilde{W} \quad \mbox{\rm in } \quad (0,T)\times D, 
 \label{temperature-mul}\\
 \mbox{\rm div }u&=0 \quad \mbox{\rm in } \quad  [0,T]\times D,  \nonumber 
 \end{align}
 where  $T>0$. The processes  
$u: \Omega\times (0,T)\times D  \to \RR^2$, and ${\mathcal T} : \Omega\times (0,T)\times D  \to \RR$ 
 have  initial conditions $u_0$ and $ {\mathcal T}_0$ in $D$ respectively, and $\pi : \Omega\times (0,T)\times D  \to \RR$. 
The  parameter $\nu>0$ denotes  the  kinematic viscosity of the fluid,  $\kappa>0$ its thermal diffusivity, and $(v_1,v_2)$ is the canonical basis of $\RR^2$. 
On $[0,T]\times \partial D$ these fields satisfy 
  the following  boundary conditions  
  \begin{align} \label{periodicity}
  u(t,x+L v_i)=u(t,x), & \; i=1,2,  \quad {\mathcal T} (t,x+L v_1) = {\mathcal T}(t,x), \quad  \partial_1 {\mathcal T}(t,x+Lv_1) = {\mathcal T}(t,x), \nonumber  \\
  \pi(t,x+L v_1) =\pi(t,x),& \quad   \partial_1 \pi (t,x+Lv_1) = \pi (t,x) 
  \end{align}
 where $v_i$, $i=1,2$
 denotes the canonical basis of $\RR^2$, 
 
 More information on the stochastic perturbation will be given in the next section. 
  We then introduce a classical change of processes to describe  the temperature and pressure (see e.g. \cite{Temam-88}  and \cite{Fer}):
 \begin{equation} \label{def_theta}
  \theta(t,x)={\mathcal T}(t,x) - {\mathcal T}_0 - x_2 \frac{{\mathcal T}_L-{\mathcal T}_0}{L}, \quad 
 \hat{\pi}(t,x) = \pi(t,x) + ({\mathcal T}_L-{\mathcal T}_0) x_2 + \frac{ ({\mathcal T}_L-{\mathcal T}_0) x_2^2}{2L} .
 \end{equation}
We deduce that $\theta$ and $\hat{\pi}$ have the same periodic properties as ${\mathcal T}$ and $\pi$,  that $\theta(t,.)=0$ on $\{x_2 =0\}\cup \{x_2=L\}$
and that the pair $(u,\theta)$ satisfies the more symmetric  Boussinesq equations
\begin{align} \label{velocity}
 \partial_t u - \nu \Delta u + (u\cdot \nabla) u + \nabla \hat{\pi} & = \theta   v_2 +  G(u)  dW\quad \mbox{\rm in } \quad (0,T)\times D,\\
 \partial_t \theta- \kappa \Delta \theta + (u\cdot \nabla \theta)  +  C_L u_2&=  \tilde{G}(\theta)  d\tilde{W} \quad \mbox{\rm in } \quad (0,T) \times D, 
 \label{temperature}\\
 \mbox{\rm div }u&=0 \quad \mbox{\rm in } \quad [0,T]\times D,  \nonumber 
 \end{align}
where  $C_L:= \frac{{\mathcal T}_L-{\mathcal T}_0}{L}\in \RR$ and $\GG$ is defined by 
$\GG(\theta) = {\mathcal G}\big({\mathcal T}-{\mathcal T}_0 - x_2 \frac{{\mathcal T}_L-{\mathcal T}_0}{L}\big)$ .
 
 In two dimensions, the deterministic version of this system (without stochastic perturbations) has been extensively studied, yielding a comprehensive understanding 
 of its well-posedness and long-time behavior; see \cite{Temam-88}. Further investigations have addressed the cases where either 
$\nu=0$ or  $\kappa=0$, albeit with only partial results.

In \cite{HouLi}, the authors establish the existence and uniqueness of solutions when $\nu>0$ and  $\kappa=0$ for initial conditions $u_0\in (H^m)^2(\RR^2)$ 
and $\theta_0\in H^{m-1}(\RR^2)$ for $m\geq 3$. In \cite{LaLuTi}, the case where viscosity acts only in the horizontal direction (i.e., 
 replacing $\nu \Delta u$ by $\nu \partial_1^2 u$) and $\kappa =0$ is analyzed. The authors prove the existence of a global solution for initial data 
 $\theta_0\in L^2(D)$ and a divergence-free velocity field $u_0\in \big( W^{1,2}(D)\big)^2$.  They reformulate the system in terms of vorticity and establish uniqueness 
 under stronger regularity assumptions, leveraging techniques inspired by Yudovich. To the best of our knowledge, the case $\nu=\kappa=0$ remains an open problem.

In the stochastic setting, well-posedness and the large deviation principle have been established for the B\'enard system with $\nu>0$ and $\kappa>0$; 
see \cite{ChuMil}. Random attractors are investigated in \cite{DGS} for a more general ocean-atmospheric model, while invariant measures are studied in \cite{Fer} using dissipativity arguments.

Let $V^0 = \{ u \in (L^2)^2 : {\rm div}\,  u=0\}$, $\Pi : (L^2)^2 \to V^0$ denote the Leray projection on divergence free fields,  and $A=-\Pi \Delta$ denote he Stokes operator. 
Projecting on divergence-free fields, we consider the system 
\begin{align} \label{velocity-u}
 \partial_t u + \nu A u + (u\cdot \nabla) u  & = \Pi (\theta v_2) +   G(u) dW\quad \mbox{\rm in } \quad (0,T)\times D,\\
 \partial_t \theta- \kappa \Delta \theta + (u\cdot \nabla \theta)  +  C_L u_2&=  \GG(\theta)  d\tilde{W} \quad \mbox{\rm in } \quad (0,T) \times D, 
 \label{temperature-theta} 
 \end{align}
 If $u_0$ and $\theta_0$ are square integrable, it is known that the random system \eqref{velocity}--\eqref{temperature} is well-posed, 
 and that there exists a unique solution $(u\times \theta)$ in
 $L^2([0,T];(L^2)^2 \times L^2) \cap L^2(\Omega ; (H^1)^2 \times H^1)$; see e.g. \cite{ChuMil}. 

Numerical schemes and algorithms have been introduced to best approximate the solution to non-linear PDEs. The time approximation
is either an implicit Euler or a time splitting scheme coupled with Galerkin approximation or finite elements to approximate the space variable. 
The literature on numerical analysis for SPDEs is now very extensive. 
In many papers  the models are either linear, have global Lipschitz properties,
 or more generally some monotonicity property. In this case the convergence is proven to be in mean square. 
When nonlinearities are involved that are not of Lipschitz or monotone type,  then a rate of  convergence 
 in mean square is more difficult to  obtain.  
  Indeed, because of the stochastic perturbation, one may not use the Gronwall lemma after taking the expectation of the
   error bound   
  since it involves a nonlinear term which is often  quadratic; such a nonlinearity requires some localization.

In a random setting, the discretization of the Navier-Stokes equations has been intensively investigated.  
Various space-time numerical schemes have been studied for the stochastic Navier-Stokes equations with a multiplicative or an additive noise, that is 
where in the
right hand side of  \eqref{velocity} (with no $\theta$) we have either $G(u) \, dW$  or $dW$.
 We refer to \cite{BrCaPr,Dor, Breckner, CarPro, BreDog}, where convergence in probability is stated with various rates of convergence
  in a multiplicative setting. 
 As stated previously, the main tool to get the convergence in probability  is the localization of the nonlinear term over a space of large probability.
  We studied the strong (that is  $L^2(\Omega)$) rate of convergence of the time  implicit Euler scheme
   (resp. space-time implicit Euler scheme coupled with
  finite element space discretization) in our previous papers \cite{BeMi_multi}  (resp.  \cite{BeMi_FEM}) for an $H^1$-valued initial condition. 
  The method is based on the fact that the solution (and the scheme) have finite  moments (bounded uniformly on the time mesh). 
  For a general multiplicative noise, the strong rate is logarithmic. When the diffusion coefficient is bounded (which is a slight extension of an additive noise),
  the solution has exponential  moments;  we used this property in \cite{BeMi_multi} and \cite{BeMi_FEM} to  get an explicit polynomial strong rate
   of convergence. However, this rate depends on the viscosity and the strength of the noise, and is strictly less than 1/2 for the time parameter 
   (resp. than $1$ for the
   spatial one). For a given viscosity, the rates on convergence increase to 1/2 when the strength of the noise converges to 0. For an additive noise,
  if the strength of the noise is not too large,  the strong ($L^2(\Omega)$) rate of convergence in time is the optimal one, that is almost $1/2$,  for an $H^1$-valued initial condition 
  (see  \cite{BeMi_additive}).  The case of an additive noise (or a bounded linear coefficient) depends on exponentlal moments of the supremum of the
  $H^1$-norm of the solution (and of its Euler scheme for the space discretization); this enables to have polynomial rates. 
\smallskip

The convergence of a fully implicit time Euler scheme for the two-dimensional Boussinesq equation on the torus has been studied in \cite{BeMi_Bou} for 
periodic solutions $u,\theta$ and  a multiplicative noise on the torus. 
The rate of convergence in probability (almost 1/2) is the optimal one for $H^1$-valued initial velocity and temperature, while the strong rate is logarithmic. 
The proof heavily depends on  moments of the supremum of the $H^1$-norms for the velocity, which is quite easy to prove, and of the temperature, which is more delicate and
requires larger moments on the initial condition. The argument is based on moments of small time increments and on the strong convergence on a subset of $\Omega$ on which
the moments of the $H^1$-norms of both quantities are bounded.

In the current paper, we study the time approximation of the Boussinesq equations \eqref{velocity}-\eqref{temperature}   subject to a multiplicative and  additive perturbation 
but for the B\'enard model, 
  Unlike \cite{BeMi_Bou} we consider  semi-implicit (and not fully implicit)  time Euler schemes $\{ u^j\}_j$ and $\{\theta^j\}_j$.
On one hand, we extend the results of \cite{BeMi_Bou} to this more general setting and prove that the rate of convergence in probability of the scheme is "almost" 1/2.
 On the other hand, we  prove  the  strong (i.e., $L^2(\Omega)$) polynomial rate of convergence of the scheme, which is "almost" 1/2 (that is the optimal one due to the Gaussian noise), 
uniformly on the time grid
for the $L^2$-norms of both processes. In order to achieve this rate, we give an explicit constraint  on the average temperature difference  $C_L$ - when it is different from 0 - 
and the strength of
the perturbations  in the case of deterministic initial conditions. As expected, this constraint is stronger
for "small" values of the viscosity $\nu$ and the thermal diffusivity $\kappa$, and for "large" values of the length of the time interval and of the traces of the covariance operators of the
random additive noises.  
  As  in the case of  the 2D-Navier-Sokes equations,
the main ingredient is the study of exponential moments. Unlike references \cite{BeMi_multi} and \cite{BeMi_additive}, we rely on exponential moments  of the schemes 
$\frac{T}{N} \sum_{j=1}^N \|\nabla \theta^j\|_{H^0}^2$ and  $\frac{T}{N} \sum_{j=1}^N \|\nabla u^j\|_{V^0}^2$, and not on that of the square of the $H^1$ and $V^1$ norms
of the solutions, uniformly in time (the definition of the spaces $H^k$ and $V^k$ is given in section \ref{frame}).
 Note that such exponential moments were already proven for the scheme $u^n$ of the 2D Navier-Stokes equations  in \cite{BeMi_FEM};  
they were used to relate the time and the space time Euler schemes.  
When $C_L\neq 0$, the velocity and temperature are treated simultaneously, and the coefficient in front of the velocity is that in front of the temperature multiplied by $C_L$. 
When $C_L=0$, the argument is different; one can first prove exponential moments for the scheme of the temperature, and then deduce exponential moments for the scheme of the velocity.  
Note that when we only deal with the velocity, that is with the solution to the 2D Navier-Stokes equations,  the results we obtain show the strong
rate of convergence of a semi-implicit time Euler scheme to the solution, thus completing what was proved in \cite{BeMi_additive} for a fully implicit one.

 The paper is organized as follows. In section \ref{preliminary} we describe the model, the assumptions on the noise and recall known  results on the global well-posedness
 of  the solution to  \eqref{velocity}--\eqref{temperature} and on the moments of $H^1$-norm of $u$ 
 and  $ \theta$. In section \ref{scheme},  we prove study the semi-implicit Euler scheme for a  multiplicative stochastic perturbation and prove a strong convergence for  a localized version
 of the scheme. Section \ref{sec_rate_proba} shows that for a multiplicative stochastic perturbation we can deduce a rate of convergence in probability "almost  1/2" from the previous localized 
 convergence. In section \ref{additive_noise} we deal  with an additive noise; the main results are the existence of exponential moments for the schemes when $C_L$ is positive, and  when $C_L=0$
 with a different argument. 
In section \ref{s-converg} we prove the main result about the polynomial  rate of convergence  in $L^2(\Omega)$ of the semi-implicit scheme to the solution. 
Finally, for the sake of completeness, some proofs of section \ref{preliminary} are given in the Appendix. They slightly differ from that in \cite{BeMi_Bou} since the velocity appears in the evolution
of the temperature.

As usual, except if specified otherwise, $C$ denotes a positive constant that may change  throughout the paper,
 and $C(a)$ denotes  a positive constant depending on some parameter $a$.

\section{Description of the model}\label{preliminary} 
In this section, we describe the functional framework, the driving noise, the evolution equations, 
and state known global well-posedness results for square integrable initial conditions and moments in $H^1$ uniformly in time.

\subsection{The functional framework}\label{frame}
Let  $D= [0,L]^2$   with periodic boundary conditions for the velocity $u$ and the pressure $\pi$, and such that 
$\theta(x_1,x_2)=0 $  on $\{x_2=0\} \cup \{x_2=1\}$,    $ \partial_1(\theta)$ is periodic in $x_ 1$  with period  $L$. 

Let 
${\mathbb L}^p:=L^p(D)^2$ (resp. ${\mathbb W}^{k,p}:=W^{k,p}(D)^2$)   be  the usual Lebesgue and Sobolev spaces of 
vector-valued functions
endowed with the norms  $\|\cdot \|_{{\mathbb L}^p}$  (resp. $\|\cdot \|_{{\mathbb W}^{k,p}}$).

  Let $\tilde{A} = -\Delta$ acting on $L^2(D)$ with the boundary conditions  imposed on $\theta$.  Let $\tilde{\lambda}_1>0$ denote the smallest positive eigenvalue of the
operator $(-\Delta)$ acting on the set of functions $\theta$ described above.  Then 
\begin{equation} 	\label{tilde_lambda_1} 
|\theta\|_{H^0}^2 \leq \frac{1}{\tilde{\lambda}_1} \| \tilde{A}^{\frac{1}{2}}\theta \|_{H^0}^2, \quad 
\forall  \theta \in H^1.
\end{equation} 

  For any non negative real number $k$ let
\[  H^k={\rm Dom}\big(\tilde{A}^{\frac{k}{2}}\big) , \;   V^k=\{ \phi \in {\mathbb W}^{k,2} \, : \mbox{\rm div} \phi =0\},\; \mbox{\rm endowed with the norms }
 \|\cdot\|_{H^k} \; \mbox{\rm and } \|\cdot\|_{V^k}.\]
Thus $H^0=L^2(D)$,  $V^0 \subset {\mathbb L}^2(D)$, and $H^k=W^{k,2}$. 
 Moreover,   let  $V^{-1}$ be the dual space of $V^1$ 
   with respect to the  pivot space $V^0$,  and 
  $\langle\cdot,\cdot\rangle$ denotes the duality between $V^1$ and $V^{-1}$.   
  Let  $\Pi : \mathbb{L} ^2 \to V^0$ denote the Leray projection, and let   $A=- \Pi \Delta$ denote the Stokes operator, with  domain  
  $\mbox{\rm Dom}(A)={\mathbb W}^{2,2}\cap V^0 = V^2$.  
  
   Let $\lambda_1>0$ denote the smallest positive eigenvalue of $A$. Then 
 \begin{equation}		\label{lambda_1}
 \| u\|_{V^0}^2 \leq \frac{1}{\lambda_1} \| A^{\frac{1}{2}} u\|_{V^0}^2  \quad \forall u\in V^1. 
 \end{equation}

  Let $b:(V^1)^3 \to \RR$ denote the trilinear map defined by 
  \[ b(u_1,u_2,u_3):=\int_D  \big(\big[ u_1(x)\cdot \nabla\big] u_2(x)\big)\cdot u_3(x)\, dx. \]
  The incompressibility condition implies   $b(u_1,u_2, u_3)=-b(u_1,u_3,u_2)$  for $u_i \in V^1$, $i=1,2,3$. 
  There exists a continuous bilinear map $B:V^1\times V^1 \mapsto
  V^{-1}$ such that
  \[ \langle B(u_1,u_2), u_3\rangle = b(u_1,u_2,u_3), \quad \mbox{\rm for all } \; u_i\in V^1, \; i=1,2,3.\]
  Therefore, the map  $B$ satisfies the following antisymmetry relations:
  \begin{equation} \label{B}
  \langle B(u_1,u_2), u_3\rangle = - \langle B(u_1,u_3), u_2\rangle , \quad \langle B(u_1,u_2),  u_2\rangle = 0 
  \qquad \mbox {\rm for all } \quad u_i\in V.
  \end{equation}
  For $u,v\in V^1$,  we have $B(u,v):= \Pi \big( \big[ u\cdot \nabla\big]  v\big) $.
  
  Furthemore, since $D=[0,L]^2$ with periodic boundary conditions, we have
  \begin{equation}		\label{B(u)Au}
  \langle B(u,u), Au\rangle =0, \quad \forall  u\in V^2.
  \end{equation}
  Note that for $u\in V^1$ and $\theta_1, \theta_2\in H^1$, if $(u.\nabla)\theta = \sum_{i=1,2} u_i \partial_i \theta$, we have
  \begin{equation} 	\label{antisym-ut}
   \langle [u.\nabla]\theta_1\, , \, \theta_2 \rangle = - \langle [u.\nabla]\theta_2\, , \, \theta_1\rangle,
   \end{equation}
  so that $\langle [u.\nabla]\theta\, , \, \theta \rangle =0$ for $u\in V^1$ and $\theta\in H^1$.

  In dimension 2 the inclusions  $H^1\subset L^p$  and $V^1\subset \LL^p$  for $p\in [2,\infty)$ follow from the Sobolev embedding theorem. 
  More precisely 
  the following Gagliardo Nirenberg inequality is true for some constant $\bar{C}_p$
    \begin{equation} \label{GagNir}
  \|u\|_{\LL^p} \leq \bar{C}_p \;   \|A^{\frac{1}{2}}  u\|_{\LL^2}^{\alpha}  \|u\|_{\LL^2}^{1-\alpha}  \quad \mbox{\rm for} \quad
    \alpha = 1-\frac{2}{p}, \quad \forall u\in V^1.
  \end{equation} 
.

Finally, let us recall the following estimate of the bilinear terms $(u.\nabla)v$ and $(u.\nabla)\theta$.
\begin{lemma} \label{GiMi}
Let $\alpha,  \rho$ be positive numbers and $\delta \in [0,1)$ be such that  $\delta +\rho > \frac{1}{2}$ and $\alpha + \delta +\rho \geq 1$. 
 Let  $u\in V^\alpha$,  $v\in V^\rho$ and $\theta \in H^\rho$; then 
\begin{align} 	\label{GiMi-uv}
\| A^{-\delta} \Pi [(u.\nabla) v] \|_{V^0} &\leq C \| A^\alpha u\|_{V^0}\, \|A^\rho v\|_{V^0}, \\
\| \tilde{A}^{-\delta} [(u.\nabla) \theta]\|_{H^0} &\leq C \| A^\alpha u\|_{V^0}\, \|\tilde{A}^\rho \theta\|_{H^0}, \label{GiMi-ut}
\end{align} 
   for some positive constant $C:= C(\alpha, \delta, \rho)$.
\end{lemma}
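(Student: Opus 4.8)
\medskip\noindent\textbf{Proof strategy.} The plan is to prove both estimates by duality in the pivot space, which reduces each of them to the bound of a single trilinear term, and then to combine the incompressibility of $u$ with the two--dimensional Sobolev and Gagliardo--Nirenberg inequalities. I would write out \eqref{GiMi-uv} in full and then observe that \eqref{GiMi-ut} follows in exactly the same way, with $A$ replaced by $\tilde A$, the tensor $u\otimes v$ replaced by the scalar field $u\,\theta$, and no Leray projection.

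For \eqref{GiMi-uv}: since $A^{-\delta}$ and $\Pi$ are self--adjoint on $V^0$ and $A^{-\delta}$ takes values in divergence--free fields, I would write, for $w\in V^0$,
\[
\langle A^{-\delta}\Pi[(u\cdot\nabla)v],w\rangle=\langle (u\cdot\nabla)v,z\rangle = b(u,v,z) = -\,b(u,z,v),\qquad z:=A^{-\delta}w,
\]
using the antisymmetry \eqref{B} (equivalently $\mbox{\rm div}\,u=0$), first for smooth fields and then by density. As $w$ ranges over the unit ball of $V^0$, $z$ ranges over $\{z:\|A^\delta z\|_{V^0}\le 1\}$, so
\[
\|A^{-\delta}\Pi[(u\cdot\nabla)v]\|_{V^0}=\sup\big\{|b(u,z,v)|:\ \|A^\delta z\|_{V^0}\le 1\big\},
\]
and it remains to prove $|b(u,z,v)|\le C\,\|A^\alpha u\|_{V^0}\,\|A^\delta z\|_{V^0}\,\|A^\rho v\|_{V^0}$.

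For this I would apply Hölder's inequality with exponents $\tfrac1{p_1}+\tfrac1{p_2}+\tfrac1{p_3}=1$ to $b(u,z,v)=\int_D\big([u\cdot\nabla]z\big)\cdot v\,dx$, obtaining $\|u\|_{\LL^{p_1}}\|\nabla z\|_{\LL^{p_2}}\|v\|_{\LL^{p_3}}$, and then control $\|u\|_{\LL^{p_1}}\le C\|A^\alpha u\|_{V^0}$, $\|v\|_{\LL^{p_3}}\le C\|A^\rho v\|_{V^0}$ and $\|\nabla z\|_{\LL^{p_2}}\le C\|A^\delta z\|_{V^0}$ by the Sobolev embeddings $\mbox{\rm Dom}(A^s)\hookrightarrow\LL^q$ valid in dimension two (together with \eqref{GagNir}). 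A short computation with these Sobolev indices shows that the $p_i$ can be chosen admissible precisely when $\delta+\rho>\tfrac12$ and $\alpha+\delta+\rho\ge1$ hold, which are exactly the hypotheses. Taking the supremum over $z$ then yields \eqref{GiMi-uv}, and the scalar case \eqref{GiMi-ut} is obtained verbatim from \eqref{antisym-ut}.

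The genuine obstacle is this last point: the single derivative in $b(u,z,v)$ must be absorbed, and whether it may fall entirely on $z$ (which needs $\mbox{\rm Dom}(A^\delta)\hookrightarrow\mathbb{W}^{1,p_2}$, hence $\delta$ not too small) or entirely on $v$ (which needs $\rho$ not too small) depends on the relative sizes of $\alpha,\delta,\rho$. In the remaining regime, where no single factor is regular enough to carry the whole derivative, one distributes it fractionally between two factors by means of a fractional Leibniz inequality, or, equivalently, one proves \eqref{GiMi-uv} at the two natural endpoint configurations of $(\alpha,\delta,\rho)$ and interpolates; this is precisely where the strict inequality $\delta+\rho>\tfrac12$ and the borderline case of $\alpha+\delta+\rho\ge1$ are used. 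Everything else is routine bookkeeping of Sobolev exponents.
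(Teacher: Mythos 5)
Your argument is correct and is essentially the same as the paper's: the paper simply cites Lemma~2.2 of Giga--Miyakawa for \eqref{GiMi-uv} and notes that the identical Sobolev-embedding-plus-H\"older argument gives \eqref{GiMi-ut}, and the duality/antisymmetry reduction you describe, together with the exponent bookkeeping (and a fractional product rule or interpolation in the intermediate regime where neither factor can absorb the whole derivative), is precisely that argument. You actually supply more detail than the paper does, and you correctly locate where the hypotheses $\delta+\rho>\tfrac12$ and $\alpha+\delta+\rho\ge 1$ enter.
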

\begin{proof}
The upper estimate \eqref{GiMi-uv} is Lemma 2.2 in \cite{GigMiy}. The argument, which is based on the Sobolev embeding theorem and
H\"older's inequality, clearly proves \eqref{GiMi-ut}. 
\end{proof}

\subsection{The multiplicative stochastic perturbation}\label{noise} 
Let $K$ (resp. $\tilde K$) be a Hilbert space 
 and let $(W(t), t\geq 0)$ (resp. $(\tilde{W}(t), t\geq 0)\, )$ be a $K$-valued (resp. $\tilde{K}$- valued) 
 Brownian motion with covariance $Q$ (resp. $\tilde{Q}$),
 that is a trace-class operator of $K$ (resp. $\tilde{K}$)  such that $Q \zeta_j = q_j \zeta_j$ (resp. $\tilde{Q} \tilde{\zeta}_j = \tilde{q}_j \tilde{\zeta}_j$), where
  $\{ \zeta_j\}_{j\geq 0}$ (resp. $\{ \tilde{\zeta}_j\}_{j\geq 0}$) is a complete orthonormal system of $K$ (resp. $\tilde{K}$), $q_j, \tilde{q}_j >0$,
  and ${\rm Tr}(Q)=\sum_{j\geq 0} q_j <\infty$
  (resp. ${\rm Tr}(\tilde{Q})=\sum_{j\geq 0} \tilde{q}_j <\infty$). Let $\{ \beta_j \}_{j\geq 0}$ (resp. $\{ \tilde{\beta}_j\}_{j\geq 0}$) be a sequence of independent
  one-dimensional Brownian motions on the same filtered probability space $(\Omega, {\mathcal F}, ({\mathcal F}_t, t\geq 0), \PP)$. Then
  \[ W(t)=\sum_{j\geq 0} \sqrt{q_j}\, \beta_j(t) \,\zeta_j, \quad \tilde{W}(t)=\sum_{j\geq 0} \sqrt{\tilde{q}_j }\, \tilde{\beta}_j \,\tilde{\zeta}_j.\] 
   For details concerning this Wiener process  we refer  to \cite{DaPZab}.

We make the following classical linear growth and Lipschitz assumptions on the diffusion coefficients $G$ and ${\mathcal G}$.

 \noindent{\bf Condition (C-u) (i)} Let $G:V^0\to {\mathcal L}(K;V^0)$ be such that
 \begin{align}  \|G(u)\|^2_{{\mathcal L}(K,V^0)} \leq&\,  K_0 + K_1 \|u\|^2_{V^0}, \quad \forall u\in V^0, 	\label{growthG-0}\\
  \|G(u_1)-G(u_2)\|^2_{{\mathcal L}(K,V^0)} \leq &\,  L_1  \|u_1-u_2|^2_{V^0} , \quad  \forall u_1,u_2\in V^0.	\label{LipG}
 \end{align}
{\bf  (ii)} Let also   $G:V^1\to {\mathcal L}(K;V^1)$ satisfy the growth condition
\begin{equation}		\label{growthG-1}
\| G(u)\|_{{\mathcal L}(K;V^1)}^2 \leq K_2 + K_3 \|u\|_{V^1}^2, \quad \forall u\in V^1.
\end{equation}
\noindent {\bf Condition  ($\tilde{C}$) (i)} Let ${\mathcal G}:H^0\to {\mathcal L}(\KK;H^0)$ be such that
 \begin{align}  \| {\mathcal G}({\mathcal T})\|^2_{{\mathcal L}(\KK,H^0)} \leq&\,  \tilde{K}_0 + \tilde{K}_1 \|{\mathcal T}\|^2_{H^0}, \quad \forall {\mathcal T} \in H^0, 	
 \label{growthGG-0}\\
  \|{\mathcal G}({\mathcal T}_1)-{\mathcal G}({\mathcal T}_2)\|^2_{{\mathcal L}(\KK,H^0)} \leq &\,  \tilde{L}_1  \|{\mathcal T}_1-{\mathcal T}_2 \|^2_{H^0} , \quad \forall {\mathcal T}_1, 
  {\mathcal T}_2\in H^0.	\label{LipGG}
 \end{align}
 {\bf (ii)} Let also ${\mathcal G}:H^1\to {\mathcal L}(K;H^1)$
 satisfy the growth condition
 \begin{equation}		\label{growthGG-1}
\| {\mathcal G}({\mathcal T})\|_{{\mathcal L}(K;H^1)}^2 \leq \KK_2 + \KK_3 \|{\mathcal T}\|_{H^1}^2, \quad \forall {\mathcal T}\in H^1.
\end{equation} 
 Let $\theta$ and $\hat{\pi}$ be defined in terms of ${\mathcal T}$ and $\pi$ by \eqref{def_theta}. Then the coeffficient
$\GG$ defined by $\GG(\theta) = {\mathcal G}\big({\mathcal T}-{\mathcal T}_0 - x_2 \frac{{\mathcal T}_L-{\mathcal T}_0}{L}\big)$ clearly satisfies the same linear growth and Lipschitz
conditions as those stated in {\bf Condition  ($\tilde{C}$)}, replacing ${\mathcal G}$ by $\GG$ and ${\mathcal T}$ by $\theta$ respectively. Therefore, the condition  {\bf Condition  (C-$\theta$)}
 from \cite{BeMi_Bou} is satisfied.
    
 Projecting the velocity on divergence free fields,  we consider the following SPDEs for the processes modeling  the velocity  $u(t)$ 
 and the temperature $\theta(t) $. The initial conditions $u_0$ and $\theta_0$ are ${\mathcal F}_0$-measurable, taking values in  
  $V^0$ and $H^0$ respectively, and the time dynamic is 
 \begin{align}
\partial_t u(t) + \big[ \nu\, A u(t) + B(u(t),u(t)) \big] dt = &\, \Pi (\theta(t) v_2) +  G\big((u(t) \big) dW(t), \label{def_u} \\
\partial_t \theta(t) + \big[\kappa \,\AA \theta(t) + (u(t).\nabla) \theta(t) + C_L u_2(t) \big] dt =&\, \GG\big(\theta(t) \big) d\WW(t), 	\label{def_t}
\end{align}

$\nu,\kappa$ are strictly positive constants, and  $v_2=(0,1)\in \RR^2$.

 \subsection{Global well posedness and moment estimates of $(u,\theta)$}	\label{gwp}
   The first result states the existence and uniqueness of   a   weak pathwise solution (that is strong probabilistic solution in the weak deterministic sense) 
  of \eqref{def_u}-\eqref{def_t}. It has been proven in \cite{ChuMil}.
 \begin{theorem}		\label{th-gwp}
Let $(\Omega, {\mathcal F}, \{{\mathcal F}_t \}_{t\geq 0},\PP)$  be a filtered probability space, and $W$ be a Brownian motion with respect to this filtration as described in Section
\ref{noise}.
 Let $u_0$ and $\theta_0$ be ${\mathcal F}_0$-measurable such that 
 $u_0\in L^{2p}(\Omega;V^0)$, $\theta_0 \in L^{2p}(\Omega;H^0)$ for  $p\in [2,\infty)$. 
 Then equations \eqref{def_u}--\eqref{def_t} have a unique pathwise solution, i.e.,
 \begin{itemize}
 \item  $u $ (resp. $\theta$) is an adapted $V^0$-valued (resp. $H^0$-valued) process which belongs a.s. to $L^2(0,T ; V^1)$
 (resp. to $L^2(0,T;H^1)$),
 \item $\PP$ a.s. we have  $u\in C([0,T];V^0)$, $\theta \in C(0,T];H^0)$, and 
  \begin{align*}
  \big(u(t), \varphi\big) +& \nu \int_0^t \big( A^{\frac{1}{2}} u(s), A^{\frac{1}{2}} \varphi\big) ds 
  + \int_0^t \big\langle [u(s) \cdot \nabla]u(s), \varphi\big\rangle ds \\
 &    = \big( u_0, \varphi) + \int_0^t \big( \Pi \theta(t) v_2,\varphi) ds + \int_0^t \big( \varphi , G(u(s))  dW(s) \big),\\
  \big(\theta(t), \psi\big) +& \kappa \int_0^t \big( \AA^{\frac{1}{2}} \theta(s), \AA^{\frac{1}{2}} \psi\big) ds 
  + \int_0^t \big\langle [u(s) \cdot \nabla]\theta(s), \psi\big\rangle ds  + \int_0^t \big( C_L u_2, \psi) ds \\
 &    = \big( \theta_0, \psi) + \int_0^t \big( \psi ,  \GG(\theta(s))  d\WW(s) \big),
 \end{align*}
for every $t\in [0,T]$ and every $\varphi \in V^1$ and $\psi\in H^1$.
  \end{itemize}
Furthermore, 
 \begin{align}		\label{mom_ut_L2}
 \EE\Big( \sup_{t\in [0,T]} &\big[ \|u(t)\|_{V^0}^{2p} +  \|\theta(t)\|_{H^0}^{2p} \big] \nonumber \\
 + & \int_0^T  \big[  \|A^{\frac{1}{2}} u(t)\|_{V^0}^2 \, \big( 1+\|u(t)\|_{V^0}^{2(p-1)}\big) 
 +  |\AA^{\frac{1}{2}} \theta(t)\|_{H^0}^2 \, \big( 1+\|\theta(t)\|_{H^0}^{2(p-1)} \big )\big] dt \Big)  \nonumber \\
 & \quad \leq C\big[ 1+\EE\big(\|u_0\|_{V^0}^{2p} + \|\theta_0\|_{H^0}^{2p}\big)\big].	
 \end{align}
 \end{theorem}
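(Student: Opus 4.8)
The plan is to establish existence by the classical Galerkin method, deriving uniform a priori bounds that already contain \eqref{mom_ut_L2}, to obtain pathwise uniqueness by an energy estimate on the difference of two solutions, and to combine the two via a Yamada--Watanabe (Gy\"ongy--Krylov) argument to get the unique pathwise solution. (One may also simply quote \cite{ChuMil}; we outline the self-contained argument because \eqref{mom_ut_L2} is what is reused in the Appendix.) Concretely, let $P_n$ and $\tilde P_n$ be the spectral projections onto the spans of the first $n$ eigenfunctions of $A$ and $\AA$ respectively, and let $(u^n,\theta^n)$ solve the finite-dimensional system obtained by applying $P_n$, resp. $\tilde P_n$, to \eqref{def_u}--\eqref{def_t}. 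Its drift is locally Lipschitz (the bilinear maps $B$ and $(u\cdot\nabla)\theta$ are smooth on a finite-dimensional space and the coupling terms $\Pi(\theta v_2)$, $C_L u_2$ are linear) and its diffusion is globally Lipschitz by {\bf Condition (C-u)} and {\bf Condition ($\tilde{C}$)}, so there is a unique local solution; the a priori bound below rules out explosion, so it is global.

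\textbf{The a priori estimate.} This is the core step. Apply It\^o's formula to $\|u^n(t)\|_{V^0}^{2p}$ and to $\|\theta^n(t)\|_{H^0}^{2p}$ and add the two identities. The nonlinear terms vanish, since $\langle B(u^n,u^n),u^n\rangle=0$ and $\langle (u^n\cdot\nabla)\theta^n,\theta^n\rangle=0$ by \eqref{B} and \eqref{antisym-ut}; the dissipation provides the good terms $-2p\,\nu\,\|A^{1/2}u^n\|_{V^0}^2\,\|u^n\|_{V^0}^{2(p-1)}$ and $-2p\,\kappa\,\|\AA^{1/2}\theta^n\|_{H^0}^2\,\|\theta^n\|_{H^0}^{2(p-1)}$. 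The two coupling terms $2p\,\|u^n\|_{V^0}^{2(p-1)}\big(\Pi(\theta^n v_2),u^n\big)$ and $-2p\,C_L\,\|\theta^n\|_{H^0}^{2(p-1)}\big(u^n_2,\theta^n\big)$ are bounded, by Cauchy--Schwarz and Young's inequality, by $C\big(\|u^n\|_{V^0}^{2p}+\|\theta^n\|_{H^0}^{2p}\big)$; this is exactly the point at which velocity and temperature must be estimated together, since neither bound closes on its own. The It\^o corrections and the expectations of the stochastic integrals are controlled, using the linear growth assumptions \eqref{growthG-0}, \eqref{growthGG-0} and the trace-class property of $Q,\tilde Q$, by $C\big(1+\|u^n\|_{V^0}^{2p}+\|\theta^n\|_{H^0}^{2p}\big)$; after taking $\sup_{t\le T}$, the martingale parts are dominated via the Burkholder--Davis--Gundy inequality and then Young's inequality absorbs a fraction of $\EE\sup_{t\le T}\big[\|u^n\|_{V^0}^{2p}+\|\theta^n\|_{H^0}^{2p}\big]$ into the left-hand side. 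Gronwall's lemma then yields \eqref{mom_ut_L2} with a constant independent of $n$ (the $\int_0^T$ dissipation terms being recovered before taking the supremum).

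\textbf{Passage to the limit.} The uniform bounds place $\{u^n\}$ in $L^{2p}(\Omega;L^\infty(0,T;V^0))\cap L^2(\Omega\times(0,T);V^1)$ and $\{\theta^n\}$ in $L^{2p}(\Omega;L^\infty(0,T;H^0))\cap L^2(\Omega\times(0,T);H^1)$, and reading off the equations bounds $\partial_t u^n$, $\partial_t\theta^n$ in suitable negative-order spaces. A stochastic compactness argument (tightness of the laws on $C([0,T];V^0_{\mathrm{weak}})\cap L^2(0,T;V^0)$ and the analogous space for $\theta$, Prokhorov, and a Skorokhod representation) produces a subsequence converging a.s. on a new probability space to a process solving \eqref{def_u}--\eqref{def_t} in the martingale sense, with \eqref{mom_ut_L2} passing to the limit by lower semicontinuity of the norms. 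Pathwise uniqueness together with the Gy\"ongy--Krylov lemma then upgrades this to the unique pathwise solution on the original filtered space, with the stated trajectory regularity.

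\textbf{Uniqueness, and the main obstacle.} For pathwise uniqueness, let $(u,\theta)$ and $(\bar u,\bar\theta)$ solve the system with the same data, set $w=u-\bar u$, $\varrho=\theta-\bar\theta$, and apply It\^o to $\|w(t)\|_{V^0}^2+\|\varrho(t)\|_{H^0}^2$. Using the antisymmetry of $B$, the difference of the nonlinear velocity terms reduces to $\langle B(w,\bar u),w\rangle$, which by the Gagliardo--Nirenberg inequality \eqref{GagNir} with $p=4$ is bounded by $\tfrac{\nu}{2}\|A^{1/2}w\|_{V^0}^2+C\,\|A^{1/2}\bar u\|_{V^0}^2\,\|w\|_{V^0}^2$; the difference of the temperature transport terms splits into a contribution $\langle(\bar u\cdot\nabla)\varrho,\varrho\rangle=0$ and a term $\langle(w\cdot\nabla)\bar\theta,\varrho\rangle$ which is absorbed into the dissipations up to $C\,\|\AA^{1/2}\bar\theta\|_{H^0}^2\,\|w\|_{V^0}^2$, while the coupling terms $(\varrho v_2,w)$, $C_L(w_2,\varrho)$ and the Lipschitz terms from \eqref{LipG}, \eqref{LipGG} contribute only $C\big(\|w\|_{V^0}^2+\|\varrho\|_{H^0}^2\big)$. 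One is left with $d\big(\|w\|_{V^0}^2+\|\varrho\|_{H^0}^2\big)\le \phi(t)\,\big(\|w\|_{V^0}^2+\|\varrho\|_{H^0}^2\big)\,dt+dM_t$ with $\phi\in L^1(0,T)$ a.s. (since $\int_0^T\big(\|A^{1/2}\bar u\|_{V^0}^2+\|\AA^{1/2}\bar\theta\|_{H^0}^2\big)dt<\infty$ a.s.), and a stopping-time plus Gronwall argument forces $w\equiv\varrho\equiv 0$. I expect the delicate point to be closing the a priori estimate: keeping the velocity and temperature bounds coupled through the Young estimates on the cross terms while still extracting enough dissipation, and correctly chaining Burkholder--Davis--Gundy, Young and Gronwall after taking the supremum in time; the analogous care is needed in running the uniqueness Gronwall argument with the random, only a.s.-integrable coefficient $\phi$.
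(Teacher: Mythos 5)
Your proposal is correct and follows the standard Galerkin / a priori estimate / stochastic compactness / pathwise-uniqueness route; the paper itself does not prove Theorem \ref{th-gwp} but simply cites \cite{ChuMil}, where essentially this argument (for an abstract 2D hydrodynamical system covering the B\'enard--Boussinesq model) is carried out, including the key observation that the two coupling terms force the velocity and temperature energy estimates to be closed simultaneously. The only detail to adjust is that in the uniqueness step the cross term $\langle (w\cdot\nabla)\bar\theta,\varrho\rangle$ is absorbed up to $C\,\|\AA^{\frac12}\bar\theta\|_{H^0}^2\big(\|w\|_{V^0}^2+\|\varrho\|_{H^0}^2\big)$ rather than a multiple of $\|w\|_{V^0}^2$ alone, which is harmless since the coefficient remains a.s. integrable on $[0,T]$.
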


The following result proves that if $u_0\in V^1$ and $\theta_0\in H^0$, the solution $u,\theta$ to  \eqref{def_u}--\eqref{def_t} is more regular.
 It is proven in \cite{BeMi_Bou} Proposition 3.2 for a slightly simpler equation (without the B\'enard modification). 
 Since the evolution equation of $u$ is not affected by the change of function describing the temperature, the argument is unchanged. 
\begin{prop}  	\label{prop_u_V1}
Let $u_0\in L^{2p}(\Omega;V^1)$, $\theta_0\in L^{2p}(\Omega;H^0)$ for    $p\in [2,\infty)$. 
 Then the solution $u$ to  \eqref{def_u}--\eqref{def_t} belongs a.s. to $ C([0,T];V^1)  \cap L^2([0,T];V^2)$.  Moreover,
 \begin{equation}		\label{mom_u_V1}
\EE\Big( \sup_{t\in [0,T]} \|u(t)\|_{V^1}^{2p} + \int_0^T \!\! \| A u(t)\|_{V^0}^2 \,\big[ 1+  \| A^{\frac{1}{2}} u(t) \|_{V^0}^{2(p-1)}\big] dt \Big) 
 \leq C  \big[ 1+\EE\big( \|u_0\|_{V^1}^{2p} + \|\theta_0\|_{H^0}^{2p}\big)\big].
 \end{equation}
\end{prop}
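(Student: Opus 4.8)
The plan is to establish the bound \eqref{mom_u_V1} as an a priori estimate — obtained first on a Galerkin approximation $u^{(m)}$ of the velocity equation \eqref{def_u}, then transferred to $u$ by passing to the limit — and to deduce the pathwise regularity from it. The key observation is that in \eqref{def_u} the temperature $\theta$ enters only through the buoyancy forcing $\Pi(\theta v_2)$, and since the Leray projection is an orthogonal projection in $\mathbb{L}^2$ we have $\|\Pi(\theta(t) v_2)\|_{V^0}\le \|\theta(t) v_2\|_{\mathbb{L}^2}=\|\theta(t)\|_{H^0}$, with $\EE\int_0^T\|\theta(t)\|_{H^0}^{2p}\,dt<\infty$ by \eqref{mom_ut_L2}. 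Hence $\Pi(\theta v_2)$ is a lower-order inhomogeneity whose moments are already under control, and the $V^1$-estimate for $u$ reduces exactly to the one for the stochastic Navier--Stokes equations on the torus — which is precisely the computation behind Proposition 3.2 of \cite{BeMi_Bou}.

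The structural fact that closes the $V^1$-estimate in dimension two is the cancellation \eqref{B(u)Au}, $\langle B(u,u),Au\rangle=0$, valid because $D$ is the torus. Applying It\^o's formula to $t\mapsto \big(1+\|u(t)\|_{V^1}^2\big)^p$ along \eqref{def_u}, dropping the nonlinear term via \eqref{B(u)Au}, estimating the It\^o correction and the quadratic variation through $\sum_j q_j\|A^{\frac12}G(u)\zeta_j\|_{V^0}^2\le {\rm Tr}(Q)\big(K_2+K_3\|u\|_{V^1}^2\big)$ (from \eqref{growthG-1}), and splitting the buoyancy contribution by Cauchy--Schwarz and Young so as to return part of the dissipation,
\[
2\,p\,\big(1+\|u\|_{V^1}^2\big)^{p-1}\big|\big(\Pi(\theta v_2),Au\big)_{V^0}\big|\le \nu\, p\,\big(1+\|u\|_{V^1}^2\big)^{p-1}\|Au\|_{V^0}^2 + C\big[\big(1+\|u\|_{V^1}^2\big)^{p}+\|\theta\|_{H^0}^{2p}\big],
\]
one obtains, with $C$ independent of $t$ and $m$,
\[
\big(1+\|u(t)\|_{V^1}^2\big)^p + \nu p\!\int_0^t\!\big(1+\|u(s)\|_{V^1}^2\big)^{p-1}\|Au(s)\|_{V^0}^2\,ds \le \big(1+\|u_0\|_{V^1}^2\big)^p + C\!\int_0^t\!\big(1+\|u(s)\|_{V^1}^2\big)^p ds + C\!\int_0^T\!\|\theta(s)\|_{H^0}^{2p} ds + N_t,
\]
where $N_t$ is a local martingale with $d\langle N\rangle_s\le C\big(1+\|u(s)\|_{V^1}^2\big)^{2p}\,ds$.

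I would then take the supremum over $s\in[0,t]$, take expectations, bound $\EE\sup_{s\le t}|N_s|$ by the Burkholder--Davis--Gundy inequality and then by $\varepsilon\,\EE\sup_{s\le t}\big(1+\|u(s)\|_{V^1}^2\big)^p + C_\varepsilon\,\EE\int_0^t\big(1+\|u(s)\|_{V^1}^2\big)^p\,ds$ with $\varepsilon$ small, so that the first summand is absorbed into the left-hand side. Together with $\EE\int_0^T\|\theta(t)\|_{H^0}^{2p}\,dt<\infty$ and $\big(1+\|u_0\|_{V^1}^2\big)^p\le C\big(1+\|u_0\|_{V^1}^{2p}\big)$, Gronwall's lemma applied to $t\mapsto \EE\sup_{s\le t}\big(1+\|u(s)\|_{V^1}^2\big)^p$ produces the bound on $\EE\sup_{t\le T}\|u(t)\|_{V^1}^{2p}$. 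Since $\big(1+\|A^{\frac12}u\|_{V^0}^2\big)^{p-1}\ge \tfrac12\big(1+\|A^{\frac12}u\|_{V^0}^{2(p-1)}\big)$, the surviving dissipation integral gives the weighted term of \eqref{mom_u_V1}, hence $u\in L^2(0,T;V^2)$ a.s.. Finally, the $V^1$-bounds together with \eqref{def_u} give weak continuity of $t\mapsto u(t)$ in $V^1$; combined with the a.s. continuity of $t\mapsto \|u(t)\|_{V^1}$ — obtained from It\^o's formula for $\|u(t)\|_{V^1}^2$ itself, legitimate now that $u\in L^2(0,T;V^2)$ — this yields $u\in C([0,T];V^1)$ a.s..

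The real work lies not in any single inequality but in running the It\^o computation on the regularized system, where $\|u^{(m)}\|_{V^1}$ is genuinely a semimartingale and \eqref{B(u)Au} applies verbatim, and then passing to the limit $m\to\infty$ while preserving the estimate by lower semicontinuity, together with the bookkeeping needed to make the BDG/Young absorption yield a coefficient in front of $\EE\sup_{s\le t}(\cdots)$ strictly below $1$. Once the torus cancellation \eqref{B(u)Au} is available, there is no genuine obstacle, and the coupling with $\theta$ introduces nothing new, since $\Pi(\theta v_2)$ is dominated by $\|\theta\|_{H^0}$, whose moments Theorem \ref{th-gwp} already provides.
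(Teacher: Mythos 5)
Your proposal is correct and follows essentially the same route the paper relies on: the paper does not reprove this proposition but cites Proposition 3.2 of \cite{BeMi_Bou}, whose argument is exactly the It\^o/Galerkin computation you describe — apply It\^o's formula to $\big(1+\|A^{\frac12}u\|_{V^0}^2\big)^p$, kill the nonlinearity with the torus cancellation \eqref{B(u)Au}, control the noise via \eqref{growthG-1} and Burkholder--Davis--Gundy with absorption, and close with Gronwall. Your treatment of the only new ingredient, the buoyancy forcing $\Pi(\theta v_2)$ bounded by $\|\theta\|_{H^0}$ whose $2p$-moments are supplied by \eqref{mom_ut_L2}, is precisely the observation the paper makes when asserting the argument is unchanged.
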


The next result proves similar bounds for moments of the gradient of the temperature, uniformly in time.  It is proven in \cite{BeMi_Bou} Proposition 3.3
for a simpler model with no B\'enard correction term. 
Recall that  the higher moments required on the norms of the initial conditions are related to the fact that some
estimate of $\langle [ u(t) . ] \theta(t) \, , \, \tilde{A} \theta(t)\rangle$ has to be used since this term does not vanish. 	For the sake of completeness we prove it in the 
Appendix.

\begin{prop}		\label{prop_mom_t_H1}
Let $u_0\in L^{8p+\epsilon}(\Omega;V^1)$ and $\theta_0\in L^{8p+\epsilon}(\Omega;H^1)$ for some $\epsilon >0$ and  $p\in [2,\infty)$. 
There exists a constants $C$ 
such that 
\begin{equation} 		\label{mom_t_H1}
\EE\Big( \sup_{t\leq T}  
 \| \theta(t)\|_{H^1}^{2p}
 \Big) \leq C. 
\end{equation}
\end{prop}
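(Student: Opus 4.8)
The plan is to apply the It\^o formula to $\|\tilde A^{1/2}\theta(t)\|_{H^0}^{2}=\|\nabla\theta(t)\|_{H^0}^2$, then to a power $(\cdot)^{p}$ of the resulting process (or, equivalently, to $\|\nabla\theta(t)\|_{H^0}^{2p}$ directly), and to control the bad terms using the already-established $L^2$-moment estimates of Theorem~\ref{th-gwp} together with the $V^1$-moments of $u$ from Proposition~\ref{prop_u_V1}. Concretely, testing \eqref{def_t} against $\tilde A\theta$ gives
\[
d\|\nabla\theta(t)\|_{H^0}^2 + 2\kappa\|\tilde A\theta(t)\|_{H^0}^2\,dt
= -2\langle (u(t)\cdot\nabla)\theta(t),\tilde A\theta(t)\rangle\,dt
- 2 C_L\langle u_2(t),\tilde A\theta(t)\rangle\,dt
+ \|\GG(\theta(t))\|_{\mathcal{L}(K;H^1)}^2\,dt
+ 2\langle \tilde A^{1/2}\theta(t),\tilde A^{1/2}\GG(\theta(t))\,d\WW(t)\rangle.
\]
The linear B\'enard term is harmless: $2|C_L|\,|\langle u_2,\tilde A\theta\rangle|\le \frac{\kappa}{4}\|\tilde A\theta\|_{H^0}^2 + C(C_L,\kappa)\|u\|_{V^0}^2$, and $\|u\|_{V^0}^2$ has all moments uniformly in time. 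The noise term is handled by \eqref{growthGG-1}: $\|\GG(\theta)\|_{\mathcal L(K;H^1)}^2 \le \KK_2 + \KK_3(\|\theta\|_{H^0}^2 + \|\nabla\theta\|_{H^0}^2)$, i.e.\ it is linear in $\|\nabla\theta\|_{H^0}^2$, hence absorbable by a Gronwall argument once higher powers are taken.

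The main obstacle is the trilinear term $\langle (u\cdot\nabla)\theta,\tilde A\theta\rangle$, which does \emph{not} vanish (unlike $\langle (u\cdot\nabla)\theta,\theta\rangle$ and unlike $\langle B(u,u),Au\rangle$). The standard route is the Gagliardo--Nirenberg/Ladyzhenskaya estimate in 2D: using \eqref{GagNir} and interpolation,
\[
|\langle (u\cdot\nabla)\theta,\tilde A\theta\rangle|
\le \|u\|_{\LL^4}\,\|\nabla\theta\|_{\LL^4}\,\|\tilde A\theta\|_{H^0}
\le C\,\|u\|_{V^0}^{1/2}\|u\|_{V^1}^{1/2}\,\|\nabla\theta\|_{H^0}^{1/2}\|\tilde A\theta\|_{H^0}^{1/2}\,\|\tilde A\theta\|_{H^0},
\]
and then Young's inequality with exponents chosen so that the $\|\tilde A\theta\|_{H^0}$ factor appears with total power $2$: one peels off $\frac{\kappa}{4}\|\tilde A\theta\|_{H^0}^2$ and is left with a term bounded by $C\,\|u\|_{V^0}^{2}\|u\|_{V^1}^{2}\,\|\nabla\theta\|_{H^0}^{2}$. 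This is the source of the high moment requirement on the initial data: after raising to the power $p$ and taking suprema, the Gronwall factor is $\exp\big(C\int_0^T \|u(s)\|_{V^0}^2\|u(s)\|_{V^1}^2\,ds\big)^{p}$, and to bound $\EE$ of this (together with the product against the martingale and the lower-order terms) via H\"older one needs moments of $\|u\|_{V^1}$ and $\|\theta\|_{H^1}$ of order $8p+\epsilon$; that is precisely the hypothesis. Alternatively, since $u$ only appears linearly in the $\theta$-equation, one may treat $\int_0^T\|u\|_{V^0}^2\|u\|_{V^1}^2\,ds$ as a (pathwise finite, with all moments by \eqref{mom_ut_L2}--\eqref{mom_u_V1}) random coefficient and apply a stochastic Gronwall lemma, which slightly relaxes bookkeeping but not the essential integrability budget.

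The steps in order: (i)~derive the It\^o differential for $\|\nabla\theta(t)\|_{H^0}^2$ as above, rigorously justified by a Galerkin approximation (exactly as in \cite{BeMi_Bou}, Proposition~3.3, with the extra linear term $C_L u_2$ which causes no trouble); (ii)~apply It\^o to $x\mapsto (x+1)^{p}$ or directly estimate $d\|\nabla\theta\|_{H^0}^{2p}$, producing the dissipation $\|\tilde A\theta\|_{H^0}^2\|\nabla\theta\|_{H^0}^{2(p-1)}$ on the left and, on the right, the trilinear term, the B\'enard term, the It\^o-correction from the noise (quadratic variation, controlled by \eqref{growthGG-1}), and a martingale; (iii)~use Gagliardo--Nirenberg plus Young to absorb all $\|\tilde A\theta\|_{H^0}$-factors into the dissipation, leaving a term $\le C\,\|u\|_{V^0}^2\|u\|_{V^1}^2\,\|\nabla\theta\|_{H^0}^{2p} + (\text{lower order with all moments})$; (iv)~take $\sup_{t\le T}$, use the Burkholder--Davis--Gundy inequality on the martingale term (its bracket is controlled by $\int_0^T \|\nabla\theta\|_{H^0}^{4p-2}\,\|\GG(\theta)\|_{\mathcal L(K;H^1)}^2\,ds$, split off a small multiple of $\EE\sup\|\nabla\theta\|_{H^0}^{2p}$), and (v)~conclude by the (stochastic) Gronwall lemma, bounding the exponential random factor in $L^r(\Omega)$ for suitable $r$ via \eqref{mom_ut_L2} and \eqref{mom_u_V1}; since $\|\theta_0\|_{H^1}\in L^{8p+\epsilon}(\Omega)$ the initial term $\EE\|\nabla\theta_0\|_{H^0}^{2p}$ is finite, and collecting the constants gives \eqref{mom_t_H1}. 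The details being essentially those of \cite{BeMi_Bou}, we only indicate in the Appendix the modifications due to the term $C_L u_2$.
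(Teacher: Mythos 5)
Your identification of the obstruction (the non-vanishing trilinear term $\langle (u\cdot\nabla)\theta,\AA\theta\rangle$) and your pointwise Gagliardo--Nirenberg bound for it are both on target, but the mechanism you propose for closing the estimate does not work. After Young's inequality you are left with a term of the form $C\,\|u\|_{V^0}^2\|u\|_{V^1}^2\,\|\AA^{\frac12}\theta\|_{H^0}^{2}$ (the paper's variant is $C\,\|u\|_{V^1}^4\,\|\AA^{\frac12}\theta\|_{H^0}^{2}$), and a pathwise Gronwall argument then produces the factor $\exp\big(Cp\int_0^T\|u(s)\|_{V^0}^2\|u(s)\|_{V^1}^2\,ds\big)$. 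You claim this can be bounded in $L^r(\Omega)$ via H\"older using \eqref{mom_ut_L2} and \eqref{mom_u_V1}; it cannot. Those estimates give only \emph{polynomial} moments of $\sup_t\|u(t)\|_{V^1}$, and no finite polynomial moment controls the expectation of an exponential: for the 2D system with a general multiplicative noise such exponential moments are simply not available (the paper stresses exactly this point at the start of Section \ref{sec_rate_proba}, and devotes Section \ref{additive_noise} to establishing exponential moments only for additive noise under smallness conditions). The same objection applies to the stochastic Gronwall lemma, whose usable forms still require an exponential moment of $\int_0^T A_s\,ds$ when one wants $\EE\sup X^{2p}$ rather than a fractional moment. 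So step (v) of your plan fails.

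The paper circumvents this with a two-step argument that your proposal is missing. First (Lemma \ref{lem_sup_E_t}) it works with the \emph{mild} formulation \eqref{weak_t}: the analytic smoothing $\|\AA^{1-\beta}\SS(t-s)\|\le C(t-s)^{-1+\beta}$ combined with \eqref{GiMi-ut} and Young's inequality isolates $\|\AA^{\frac12}\theta(s)\|_{H^0}^{2p}$ under a singular but \emph{deterministic} kernel, with the $u$- and $\theta$-factors pushed into separate terms of order $4p+\epsilon$; a deterministic singular-kernel Gronwall lemma (Lemma \ref{Gronwall}) then yields $\sup_{t\le T}\EE\big(\|\AA^{\frac12}\theta(t\wedge\tilde\tau_N)\|_{H^0}^{2p}\big)<\infty$, with no exponential of a random quantity ever appearing. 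Second, in the It\^o/energy estimate the offending term $\sup_s\|u(s)\|_{V^1}^4\int_0^{T\wedge\tilde\tau_N}\|\AA^{\frac12}\theta(s)\|_{H^0}^{2p}\,ds$ is bounded in expectation \emph{directly} by Cauchy--Schwarz against $\EE\sup_s\|u(s)\|_{V^1}^8$ and the step-one bound with exponent $4p$ (see \eqref{maj_u_int-theta}); this, not an exponential factor, is the actual source of the $8p+\epsilon$ moment hypothesis. To repair your proof you would need to insert this a priori pointwise-in-time moment bound (or an equivalent device) before the energy estimate; as written, the argument has a genuine gap.
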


We conclude this section stating the moment estimates of time increments of both $u$ and $\theta$ in various norms. 
The first result provides moments of the time increments in the $V^0$ and $H^0$-norms. The proof can be found in \cite[Proposition~4]{BeMi_Bou}.
The extra linear term coming from the B\'enard correction in the evolution equation of the temperature is easily dealt with. 
 \begin{prop}	\label{prop_regularity_L2} 
Let $u_0, \theta_0$ be ${\mathcal F}_0$-measurable and $p\in [2,\infty)$.

(i) Let  $u_0\in L^{4p}(\Omega;V^1)$ and $\theta_0\in L^{2p}(\Omega;H^0)$. 
 For $0\leq \tau_1<\tau_2 \leq T$,
\begin{equation}		\label{increm_u_L2}
\EE\big( \|u(\tau_2) -u(\tau_1) \|_{V^0}^{2p}\big) \leq C \,   |\tau_2-\tau_1|^{p}.
\end{equation}

(ii) Let $u_0\in L^{8p+\epsilon}(\Omega;V^1)$ and $\theta_0\in L^{8p+\epsilon}(\Omega;H^1)$ for some  $\epsilon >0$. 
Then for  $0\leq \tau_1<\tau_2 \leq T$,
\begin{equation} 	\label{increm_t_L2}
\EE\big(  \|\theta(\tau_2) -\theta(\tau_1) \|_{H^0}^{2p} \big) \leq C \, |\tau_2-\tau_1|^{ p}.
\end{equation} 
\end{prop}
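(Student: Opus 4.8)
The plan is to estimate the two time increments separately, using the mild (semigroup) formulation of each equation on the interval $[\tau_1,\tau_2]$, and to bound each resulting term by the appropriate power of $|\tau_2-\tau_1|$ with the help of the uniform moment bounds \eqref{mom_ut_L2}, \eqref{mom_u_V1}, \eqref{mom_t_H1} already recorded. For part (i), write $u(\tau_2)-u(\tau_1)$ as the sum of a deterministic ``drift'' contribution — the integral of $-\nu A u(s) - B(u(s),u(s)) + \Pi(\theta(s)v_2)$ over $[\tau_1,\tau_2]$ — and the stochastic integral $\int_{\tau_1}^{\tau_2} G(u(s))\,dW(s)$. For the drift part I would take the $V^{-1}$-norm of each term and use Hölder in time: the $\nu A u$ term is controlled by $\int_{\tau_1}^{\tau_2}\|A^{1/2}u(s)\|_{V^0}\,ds$, the bilinear term by \eqref{GiMi-uv} with $\delta=\rho=\alpha=1/2$ (so $\|A^{-1/2}B(u,u)\|_{V^0}\le C\|A^{1/2}u\|_{V^0}^2$), and the buoyancy term simply by $\int_{\tau_1}^{\tau_2}\|\theta(s)\|_{H^0}\,ds$; raising to the power $2p$, applying Hölder in $s$ to pull out $|\tau_2-\tau_1|^{2p-1}$, and then taking expectations, one invokes \eqref{mom_ut_L2} and \eqref{mom_u_V1} (this is where $u_0\in L^{4p}(\Omega;V^1)$ enters, since the bilinear term costs a fourth power of $\|A^{1/2}u\|_{V^0}$). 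For the stochastic integral I would use the Burkholder–Davis–Gundy inequality and then \eqref{growthG-0}, giving a bound by $C\,\EE\big(\int_{\tau_1}^{\tau_2}(K_0+K_1\|u(s)\|_{V^0}^2)\,ds\big)^p\le C|\tau_2-\tau_1|^p\big(1+\EE\sup_s\|u(s)\|_{V^0}^{2p}\big)$. Collecting terms gives \eqref{increm_u_L2}.

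For part (ii) the structure is the same applied to \eqref{def_t}: decompose $\theta(\tau_2)-\theta(\tau_1)$ into the time integral of $-\kappa\tilde A\theta(s)-(u(s)\cdot\nabla)\theta(s)-C_L u_2(s)$ plus $\int_{\tau_1}^{\tau_2}\GG(\theta(s))\,d\WW(s)$. Now I would measure the drift in the $H^{-1}$-norm: the diffusion term is handled by $\int_{\tau_1}^{\tau_2}\|\tilde A^{1/2}\theta(s)\|_{H^0}\,ds$, the linear Bénard term $C_L u_2$ trivially by $|C_L|\int_{\tau_1}^{\tau_2}\|u(s)\|_{V^0}\,ds$, and the transport term by \eqref{GiMi-ut} with $\delta=\rho=\alpha=1/2$, namely $\|\tilde A^{-1/2}[(u\cdot\nabla)\theta]\|_{H^0}\le C\|A^{1/2}u\|_{V^0}\|\tilde A^{1/2}\theta\|_{H^0}$. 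After raising to power $2p$ and Hölder in time, the expectation of $\big(\int_{\tau_1}^{\tau_2}\|A^{1/2}u(s)\|_{V^0}\|\tilde A^{1/2}\theta(s)\|_{H^0}\,ds\big)^{2p}$ is the delicate term; I would bound it via Cauchy–Schwarz in $s$ and then Hölder in $\Omega$, using $\sup_{s}\|\theta(s)\|_{H^1}^{2p}\in L^1$ from \eqref{mom_t_H1} together with $\EE\big(\int_0^T\|A^{1/2}u(s)\|_{V^0}^2\,ds\big)^{q}<\infty$ for a suitably large $q$ from \eqref{mom_u_V1} — this is precisely the reason the hypotheses demand the high moments $L^{8p+\epsilon}$ on $u_0$ and $\theta_0$. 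The stochastic term is again dispatched by BDG and \eqref{growthGG-0} (now in the form valid for $\GG$), yielding the factor $|\tau_2-\tau_1|^p$.

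The main obstacle I expect is the transport term $(u\cdot\nabla)\theta$ in part (ii): unlike the velocity equation, where $B(u,u)$ only involves $u$ and its $V^1$-moments are comparatively cheap, here the bilinear term couples $A^{1/2}u$ and $\tilde A^{1/2}\theta$, and controlling its $2p$-th moment in expectation forces one to simultaneously spend the $L^2_t$-integrability of $\|A^{1/2}u\|_{V^0}$ and the $L^\infty_t$-control of $\|\theta\|_{H^1}$; balancing the exponents through Hölder's inequality so that both factors land in spaces covered by \eqref{mom_u_V1} and \eqref{mom_t_H1} is what dictates the exact moment requirements on the initial data. Everything else is a routine application of BDG, the interpolation inequality \eqref{GiMi-uv}–\eqref{GiMi-ut}, and Hölder's inequality in $t$ and in $\omega$; since the extra term $C_L u_2$ is merely linear and bounded by $\|u\|_{V^0}$, it introduces no new difficulty and the proof is a direct adaptation of \cite[Proposition~4]{BeMi_Bou}.
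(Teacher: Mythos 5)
Your decomposition of each increment into a drift integral plus a stochastic integral, and the Burkholder--Davis--Gundy treatment of the latter, match the intended argument (the paper gives no proof of its own here; it cites \cite[Proposition~4]{BeMi_Bou} and merely observes that the linear B\'enard term is harmless). But there is a genuine gap in how you treat the drift: you measure it in $V^{-1}$ (resp.\ $H^{-1}$), whereas the statement asserts the increment bound in $V^0$ (resp.\ $H^0$). Controlling $\int_{\tau_1}^{\tau_2}\nu Au(s)\,ds$ by $\int_{\tau_1}^{\tau_2}\|A^{\frac12}u(s)\|_{V^0}\,ds$, and invoking \eqref{GiMi-uv}, \eqref{GiMi-ut} with $\delta=\tfrac12$, only bounds $A^{-1/2}$ (resp.\ $\tilde A^{-1/2}$) of these terms; what your argument actually yields is $\EE\|u(\tau_2)-u(\tau_1)\|_{V^{-1}}^{2p}\le C|\tau_2-\tau_1|^p$ and the analogous $H^{-1}$ bound for $\theta$. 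This cannot be upgraded to \eqref{increm_u_L2}--\eqref{increm_t_L2}: interpolating $\|w\|_{V^0}^2\le\|w\|_{V^{-1}}\|w\|_{V^1}$ against the uniform $V^1$/$H^1$ moment bounds only recovers the rate $|\tau_2-\tau_1|^{p/2}$, and the places where the proposition is used later (e.g.\ \eqref{mom_part_sumTj6}, where the Lipschitz condition \eqref{LipG} is phrased in the $V^0$ norm) genuinely require the $V^0$ rate $p$.

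The correct route estimates the drift directly in $V^0$ and $H^0$, which costs one more spatial derivative and is exactly what the hypotheses supply. For $u$: $\big\|\int_{\tau_1}^{\tau_2}\nu Au(s)\,ds\big\|_{V^0}\le \nu|\tau_2-\tau_1|^{1/2}\big(\int_0^T\|Au(s)\|_{V^0}^2\,ds\big)^{1/2}$, using the $p$-th moments of $\int_0^T\|Au\|_{V^0}^2\,ds$ obtained along the proof of Proposition \ref{prop_u_V1} (this is where $u_0\in V^1$ is indispensable), and $\|(u\cdot\nabla)u\|_{\LL^2}\le \|u\|_{\LL^4}\|\nabla u\|_{\LL^4}$ together with \eqref{GagNir} bounds the bilinear term by powers of $\sup_t\|u\|_{V^1}$ and $\|Au\|_{V^0}^{1/2}$. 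For $\theta$: the terms $\int\kappa\tilde A\theta\,ds$ and $\int(u\cdot\nabla)\theta\,ds$ must likewise be measured in $H^0$, which requires $\theta\in L^2(0,T;H^2)$ with moments and $\sup_t\|\theta(t)\|_{H^1}\in L^{2p}(\Omega)$ — precisely the content of Proposition \ref{prop_mom_t_H1} and the source of the $L^{8p+\epsilon}$ moment hypotheses on the data. A telling symptom of the gap is that your negative-norm estimates would go through with only $u_0\in V^0$, $\theta_0\in H^0$, so the $V^1$/$H^1$ assumptions are never used for what they are actually needed for.
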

The second result gives moments of the time increments of $u$ and $\theta$ in $V^1$ and $H^1$ in some integral-like form. 
Its proof  can be found in \cite[Proposition~5]{BeMi_Bou}.
Once more the extra linear term coming from the B\'enard correction does not affect the result.
\begin{prop}			\label{prop_regularity_H1}
 Let $N\geq 1$ be an integer,  for $k=0, \cdots, N$ set $t_k=\frac{kT}{N}$,  and let $\eta \in (0,1)$. 

(i) Let $p\in [2,\infty)$,  $u_0\in L^{4p}(\Omega;V^1)$ and $\theta_0\in L^{2p}(\Omega;H^0)$. 
Then
there exists  a positive constant $C$ (independent of $N$) such that 
\begin{align}			\label{mom_increm_u_1}
\EE\Big(  \Big|  \sum_{j=1}^N \int_{t_{j-1}}^{t_j}\!\! & \big[ 
\| u(s)-u(t_j)\|_{V^1}^2 + \|u(s)-u(t_{j-1})\|_{V^1}^2 \big] ds \Big|^p \Big)
\leq C \Big( \frac{T}{N}\Big)^{\eta p} 
\end{align} 

(ii) Let $p\in [2,\infty)$, $u_0\in L^{16p+\epsilon}(\Omega;V^1)$ and $\theta_0\in L^{16p+\epsilon}(\Omega;H^0)$ for some $\epsilon >0$. Then
 \begin{align}			\label{mom_increm_t_1}
\EE\Big(  \Big|  \sum_{j=1}^N \int_{t_{j-1}}^{t_j}\!\! & \big[ 
\| \theta(s)-\theta(t_j)\|_{H^1}^2 + \| \theta(s)-\theta(t_{j-1})\|_{H^1}^2 
\big] ds \Big|^p \Big)
\leq C \Big( \frac{T}{N}\Big)^{\eta p} 
\end{align} 
\end{prop}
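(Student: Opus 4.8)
The plan is to adapt the proof of \cite[Proposition~5]{BeMi_Bou}, treating the velocity (part (i)) and the temperature (part (ii)) in turn. The B\'enard correction only adds the linear term $C_L u_2$ to \eqref{def_t}, and since $\|u(r)\|_{V^0}\le\lambda_1^{-1/2}\|u(r)\|_{V^1}$ has uniform-in-time moments of the required order by Theorem~\ref{th-gwp} and Proposition~\ref{prop_u_V1}, this term is strictly lower order and does not affect the rate; I concentrate on the genuinely new difficulty, namely that the norm one has to increment ($V^1$ for $u$, $H^1$ for $\theta$) is precisely the highest norm in which uniform-in-time moments are available.

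For part (i) I would use the mild formulation of \eqref{def_u},
\[
u(t)=e^{-\nu tA}u_0+\int_0^t e^{-\nu(t-r)A}\big[\Pi(\theta(r)v_2)-B(u(r),u(r))\big]\,dr+\int_0^t e^{-\nu(t-r)A}G(u(r))\,dW(r),
\]
and for $s\in[t_{j-1},t_j]$ decompose $u(t_j)-u(s)$ into the three differences of corresponding terms. Writing $e^{-\nu(t_j-r)A}=e^{-\nu(s-r)A}e^{-\nu(t_j-s)A}$ on $[0,s]$, each convolution difference splits into a \emph{fresh} part $\int_s^{t_j}e^{-\nu(t_j-r)A}(\cdots)\,dr$ (resp. the analogous It\^o integral) and a \emph{semigroup-difference} part $\int_0^s e^{-\nu(s-r)A}\big(e^{-\nu(t_j-s)A}-I\big)(\cdots)\,dr$. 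I would then bound the $V^1$-norm (i.e. the $V^0$-norm after applying $A^{1/2}$) of each piece using: the analytic-semigroup estimates $\|A^{\sigma}e^{-\nu\tau A}\|_{\mathrm{op}}\le C(\nu\tau)^{-\sigma}$ and $\|(e^{-\nu\tau A}-I)A^{-\sigma}\|_{\mathrm{op}}\le C(\nu\tau)^{\sigma}$, the latter furnishing the crucial gain of a positive power of $t_j-s$ in the semigroup-difference parts; Lemma~\ref{GiMi}, which in two dimensions places $B(u,u)\in V^{-\varepsilon_0}$ with $\|B(u,u)\|_{V^{-\varepsilon_0}}\le C_{\varepsilon_0}\|u\|_{V^1}^2$ for every $\varepsilon_0>0$, together with $\|\Pi(\theta v_2)\|_{V^0}\le\|\theta\|_{H^0}$; the Burkholder--Davis--Gundy inequality with $\mathrm{Tr}\,Q<\infty$ for the stochastic pieces, using that by Condition~(C-u)(ii) $G$ maps $V^1$ boundedly into $\mathcal L(K;V^1)$ so that no further smoothing is needed there; and H\"older's inequality in time. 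Squaring, integrating over $s\in[t_{j-1},t_j]$, summing over $j$ via $\big(\sum_{j=1}^N a_j\big)^p\le N^{p-1}\sum_j a_j^p$, and finally taking expectations with the bounds $\EE[\sup_{t\le T}\|u(t)\|_{V^1}^{q}]<\infty$ (Proposition~\ref{prop_u_V1}) and $\EE[\sup_{t\le T}\|\theta(t)\|_{H^0}^{q}]<\infty$ (Theorem~\ref{th-gwp}) for $q$ large — the size of $q$ being forced by the quadratic factor $\|u\|_{V^1}^2$ coming from $B$ — yields the asserted $C(T/N)^{\eta p}$, and pins down the hypotheses $u_0\in L^{4p}(\Omega;V^1)$, $\theta_0\in L^{2p}(\Omega;H^0)$.

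Part (ii) is entirely parallel, with the mild formulation of \eqref{def_t} built on $e^{-\kappa t\tilde A}$: the bilinear term $B(u,u)$ is replaced by $(u\cdot\nabla)\theta$, estimated through \eqref{GiMi-ut} in the form $\|\tilde A^{-\varepsilon_0}[(u\cdot\nabla)\theta]\|_{H^0}\le C\|u\|_{V^1}\|\theta\|_{H^1}$, while the extra term $\int_0^t e^{-\kappa(t-r)\tilde A}C_L u_2(r)\,dr$ is handled trivially since $\|u_2\|_{H^0}\le\|u\|_{V^0}$. The mixed factor $\|u\|_{V^1}\|\theta\|_{H^1}$ now forces, after Cauchy--Schwarz in $\omega$, moments of $\sup_t\|\theta(t)\|_{H^1}$ (Proposition~\ref{prop_mom_t_H1}) in addition to those of $\sup_t\|u(t)\|_{V^1}$, which is the origin of the heavier moment requirement in part (ii).

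I expect the main obstacle to be the very control of this top-norm increment: since $V^1$ (resp. $H^1$) is the highest space in which moments are uniform in time, estimating its time increment inevitably calls on $V^2$- (resp. $H^2$-)type information, which the solution possesses only in the $L^2(0,T)$-in-time integrated sense and not pointwise in $t$; this is exactly why the statement is phrased in the time-integrated $\sum_j\int_{t_{j-1}}^{t_j}$ form and why the mild formulation — which lets the analytic semigroup trade the missing pointwise $V^2$ regularity for an integrable time singularity — is indispensable. Within this scheme the delicate estimate is the stochastic-convolution increment in $V^1$: one must peel off $(e^{-\nu(t_j-s)A}-I)$ to harvest a factor $(t_j-s)^{\beta/2}$ while keeping enough of the remaining semigroup to absorb the resulting $\beta/2$ loss of regularity and keep $\int_0^s(s-r)^{-\beta}\,dr$ finite, which forces $\beta<1$; this, together with the logarithmic loss near $t=0$ caused by $u_0$ lying in $V^1$ rather than a smoother space in the initial-datum term, is what degrades the exponent from $1$ to an arbitrary $\eta\in(0,1)$.
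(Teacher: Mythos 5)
Your argument is essentially correct, but it is not the route the paper takes: the paper gives no proof of this proposition at all, deferring to \cite[Proposition~5]{BeMi_Bou}, and the argument there (like the corresponding increment lemmas in \cite{BeMi_multi,BeMi_additive}) is variational rather than mild. One writes
$\|A^{\frac12}[u(t_j)-u(s)]\|_{V^0}^2=\|A^{\frac12}u(t_j)\|_{V^0}^2-\|A^{\frac12}u(s)\|_{V^0}^2-2\big(Au(s),u(t_j)-u(s)\big)$,
expands the first difference by It\^o's formula for $\|A^{\frac12}u(\cdot)\|_{V^0}^2$ (exploiting the torus cancellation \eqref{B(u)Au}) and the last pairing by the weak form of the increment tested against $Au(s)$, then sums in $j$, integrates in $s$, and closes the estimate with the time-integrated $V^2$-moments of \eqref{mom_u_V1} and a stochastic Fubini argument for the martingale contributions; the temperature is handled identically with $\tilde A$. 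Your mild-formulation route replaces the cancellation \eqref{B(u)Au} and the integrated $\|Au\|_{V^0}^2$ bounds by analytic-semigroup smoothing together with the negative-order estimates of Lemma~\ref{GiMi}, and it does deliver $(T/N)^{\eta p}$ for every $\eta\in(0,1)$: each of your three loss mechanisms (the $s^{-\sigma}(t_j-s)^{\sigma}$ factor on the initial-datum term, the $(t_j-s)^{\frac12-\varepsilon_0}$ gain from the $B(u,u)$-convolution, and the constraint on the smoothing exponent in the stochastic convolution) caps $\eta$ strictly below $1$ while allowing it arbitrarily close, exactly as stated. The main trade-off is moment bookkeeping: your bound on the $B(u,u)$ convolution consumes $\EE\big[\sup_{t\le T}\|u(t)\|_{V^1}^{4p}\big]$, which through Proposition~\ref{prop_u_V1} requires $\theta_0\in L^{4p}(\Omega;H^0)$ rather than the $L^{2p}(\Omega;H^0)$ stated in part (i); this should be flagged, though it is harmless for the paper's later applications (your accounting for part (ii), where the mixed factor $\|u\|_{V^1}\|\theta\|_{H^1}$ calls on Proposition~\ref{prop_mom_t_H1} and produces the $16p+\epsilon$ exponent, is consistent with the statement, which should in fact read $\theta_0\in L^{16p+\epsilon}(\Omega;H^1)$). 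What the variational route buys is this economy of hypotheses and the avoidance of fractional-power manipulations; what yours buys is independence from the periodic-torus identity \eqref{B(u)Au}, so it would survive boundary conditions under which $\langle B(u,u),Au\rangle\neq0$.
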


\section{The semi-implicit time Euler scheme with multiplicative noise }  	\label{scheme}
We next define the semi-implicit Euler scheme. 
 Fix $N\in \{1,2, ...\}$, let $h:=\frac{T}{N}$ denote the time mesh,
 and for $j=0, 1, ..., N$ set $t_j:=j \frac{T}{N}$.  
 
 Set $\Delta_l W:= W(t_l)-W(t_{l-1})$ and $\Delta_l \WW=\WW(t_l)-\WW(t_{l-1})$, $l=1, ...,N$.

The semi-implicit time Euler scheme $\{ u^k ; k=0, 1, ...,N\}$  and $\{ \theta^k ; k=0, 1, ...,N\}$ is defined by $u^0=u_0$, $\theta^0=\theta_0$,
 and for $\varphi \in V^1$, $\psi\in H^1$ and $l=1, ...,N$, 
\begin{align}	\label{Euler_u}
\Big( u^l-u^{l-1} &+ h \nu A u^l + h B\big( u^{l-1},u^l\big)   , \varphi\Big) = \big(\Pi (\theta^{l-1} v_2), \varphi) h  + \big(G(u^{l-1}) \Delta_l W\, , \, \varphi\big),  \\
\Big( \theta^l-\theta^{l-1} &+ h \kappa \AA \theta^l +h [ u^{l-1}.\nabla]\theta^l +  h C_L u^{l-1}_2 , \psi\Big) = \big( \GG(\theta^{l-1}) \Delta_l\WW\, , \, \psi\big). \label{Euler_t}
\end{align}

Since the scheme is semi-implicit, it is defined in terms of linear equations, and hence there is a unique solution. Furthermore, it is easy to see that 
  $\{u^l\}_{l=1, ...,N} \in V^1$ and   $\{\theta^l\}_{l=1, ...,N} \in H^1$,  and that $u^l$ and $\theta^l$ are ${\mathcal F}_{t_l}$-measurable for every $l=0, ..., N$. 
\subsection{Moments of the semi-implicit time Euler scheme}
 We next state upper bounds of moments of $u^k$ and $\theta^k$ uniformly in $k=1, ..., N$. 
The proof is similar to that of Proposition~6.1  in \cite[Proposition~6]{BeMi_Bou} (see also \cite{BrCaPr}) for a fully implicit scheme, 
that is when in equation \eqref{Euler_u} we replace $B(u^{l-1}, u^l)$ by
$B(u^l,u^l)$. Indeed, in both schemes  we have $\big\langle B( u^{l-1},u^l )   , u^l \big\rangle =
\big\langle B( u^l,u^l )   , u^l \big\rangle= 0$. Furthermore,  the extra term $h C_L u^{l-1}_2$ is linear in the pair $(u,\theta)$, which does not modify the moment estimates.
\begin{prop}		\label{prop_mom_scheme}
 Let $u_0\in L^{2^K}(\Omega;V^0) $ and
$\theta_0\in L^{2^K}(\Omega;H^0)$ respectively. Let $\{u^k\}_{k=0, ...,N}$ and $\{\theta^k\}_{k=0, ..., N}$ be solution of \eqref{Euler_u} and
\eqref{Euler_t} respectively. Then
\begin{align}		\label{mom_scheme_0}
\sup_{N\geq 1} \EE\Big(   \max_{0\leq L\leq N} \|u^L\|_{V^0}^{2^K} + \max_{0\leq L \leq N} \|\theta^L\|_{H^0}^{2^K} \Big)  <\infty \\
\sup_{N\geq 1} \EE\Big( h \sum_{l=1}^N \|A^{\frac{1}{2}} u^l\|_{V^0}^2 \|u^l\|_{V^0}^{2^K-2} +  h \sum_{l=1}^N \|\AA^{\frac{1}{2}} \theta^l\|_{H^0}^2 
\|\theta^l\|_{H^0}^{2^K-2}\Big)  <\infty, 	\label{mom_scheme_1} 
\end{align} 
\end{prop}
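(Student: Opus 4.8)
The plan is to argue by induction on the dyadic exponent, following the standard energy-estimate scheme for semi-implicit discretizations, but with the two equations \eqref{Euler_u}--\eqref{Euler_t} handled together because of the linear coupling terms $\Pi(\theta^{l-1}v_2)$ and $C_L u^{l-1}_2$. The base case $K=1$ (i.e.\ second moments) already contains all the difficulties, so I would do it first in detail and then indicate how to pass from $2^{K-1}$ to $2^K$. For the base case, test \eqref{Euler_u} against $u^l$ and \eqref{Euler_t} against $\theta^l$. Using the algebraic identity $(a-b,a)=\frac12\|a\|^2-\frac12\|b\|^2+\frac12\|a-b\|^2$, the monotonicity of $A$ and $\AA$, and the crucial cancellations $\langle B(u^{l-1},u^l),u^l\rangle=0$ and $\langle [u^{l-1}\cdot\nabla]\theta^l,\theta^l\rangle=0$ (which hold by \eqref{B} and \eqref{antisym-ut} exactly because the scheme is semi-implicit and the transport coefficient is $u^{l-1}$), one obtains for each $l$
\begin{align*}
\tfrac12\big(\|u^l\|_{V^0}^2-\|u^{l-1}\|_{V^0}^2+\|u^l-u^{l-1}\|_{V^0}^2\big) + h\nu\|A^{\frac12}u^l\|_{V^0}^2
&= h\big(\Pi(\theta^{l-1}v_2),u^l\big) + \big(G(u^{l-1})\Delta_l W,u^l\big),\\
\tfrac12\big(\|\theta^l\|_{H^0}^2-\|\theta^{l-1}\|_{H^0}^2+\|\theta^l-\theta^{l-1}\|_{H^0}^2\big) + h\kappa\|\AA^{\frac12}\theta^l\|_{H^0}^2
&= -hC_L(u^{l-1}_2,\theta^l) + \big(\GG(\theta^{l-1})\Delta_l\WW,\theta^l\big).
\end{align*}

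The coupling terms on the right are controlled by Cauchy--Schwarz and Young: $h|(\Pi(\theta^{l-1}v_2),u^l)|\le \tfrac{h}{2}\|\theta^{l-1}\|_{H^0}^2+\tfrac{h}{2}\|u^l\|_{V^0}^2$ and likewise $h|C_L||(u^{l-1}_2,\theta^l)|\le \tfrac{h|C_L|}{2}\|u^{l-1}\|_{V^0}^2+\tfrac{h|C_L|}{2}\|\theta^l\|_{H^0}^2$; note we do \emph{not} need smallness of $C_L$ here, only that it is finite. The stochastic terms are split as $(G(u^{l-1})\Delta_lW,u^{l-1}) + (G(u^{l-1})\Delta_lW, u^l-u^{l-1})$; the first piece is a martingale increment and the second is handled by Young's inequality, using It\^o isometry conditional on $\mathcal F_{t_{l-1}}$ to replace $\EE\|G(u^{l-1})\Delta_lW\|_{V^0}^2$ by $h\,\EE\|G(u^{l-1})\|^2_{{\mathcal L}(K,V^0)}\le h(K_0+K_1\EE\|u^{l-1}\|_{V^0}^2)$ via \eqref{growthG-0}, and similarly for $\GG$ via \eqref{growthGG-0}. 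Summing over $l=1,\dots,L$, adding the $u$ and $\theta$ inequalities, taking expectations (which kills the martingale parts), one gets $\EE\|u^L\|_{V^0}^2+\EE\|\theta^L\|_{H^0}^2 + \nu h\sum\EE\|A^{\frac12}u^l\|_{V^0}^2 + \kappa h\sum\EE\|\AA^{\frac12}\theta^l\|_{H^0}^2 \le C(1+\EE\|u_0\|_{V^0}^2+\EE\|\theta_0\|_{H^0}^2) + C h\sum_{l=1}^L\big(\EE\|u^l\|_{V^0}^2+\EE\|\theta^l\|_{H^0}^2\big)$, and the discrete Gronwall lemma closes the $L^2$ estimate in expectation and yields \eqref{mom_scheme_1} for $K=1$ as a byproduct. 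To upgrade to the $\sup$ inside the expectation as in \eqref{mom_scheme_0}, one keeps the martingale term, takes $\max_{0\le L\le N}$ before expectation, and applies the Burkholder--Davis--Gundy inequality to $\sum_l(G(u^{l-1})\Delta_lW,u^{l-1})$, bounding its bracket by $h\sum_l\|u^{l-1}\|_{V^0}^2\|G(u^{l-1})\|^2_{{\mathcal L}(K,V^0)}$ and absorbing a $\max_L\|u^L\|_{V^0}^2$ factor with a small constant; same for $\theta$.

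For the induction step from $2^{K-1}$ to $2^K$, I would test the same way but then raise the energy identity to the power $2^{K-1}$. Concretely, set $X^l:=\|u^l\|_{V^0}^2+\|\theta^l\|_{H^0}^2$; the estimates above give $X^l\le X^{l-1} + (\text{deterministic terms of order }hX^l) + (\text{martingale increments }M_l) + (\text{quadratic-variation corrections of order }hX^{l-1})$, and then one uses the elementary inequality $a^{2^{K-1}}\le b^{2^{K-1}} + 2^{K-1}a^{2^{K-1}-1}(a-b)$ together with H\"older/Young to get a closed recursion for $\EE[(X^L)^{2^{K-1}}]=\EE\,X^{2^{K-1}}$-type quantities; the stochastic terms at this level are again controlled by BDG, using the growth bounds \eqref{growthG-0} and \eqref{growthGG-0} to the appropriate power, and the discrete Gronwall lemma finishes. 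The terms $h\sum_l\|A^{\frac12}u^l\|^2_{V^0}\|u^l\|_{V^0}^{2^K-2}$ and its $\theta$-analogue in \eqref{mom_scheme_1} appear with a good sign on the left-hand side after multiplying the viscous/diffusive dissipation term by $(X^l)^{2^{K-1}-1}$, so one gets them for free once the recursion is set up. The only genuine subtlety, and the step I expect to be the main obstacle, is bookkeeping the cross terms that arise when raising the \emph{sum} of the $u$- and $\theta$-identities to a power $2^{K-1}$: the coupling contributions $h\|\theta^{l-1}\|_{H^0}\|u^l\|_{V^0}$ and $h|C_L|\|u^{l-1}\|_{V^0}\|\theta^l\|_{H^0}$ must be distributed by Young's inequality so that, after summation, everything is dominated by $h\sum_l\EE\,(X^l)^{2^{K-1}}$ (to feed Gronwall) plus the dissipation terms with small constants (to be absorbed on the left); keeping the constants uniform in $N$ requires that every use of Young's inequality trade a factor $h=T/N$ against an $O(1)$ constant, never the reverse. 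Since the B\'enard correction term is linear, it never competes with the dissipation and causes no essential new difficulty beyond this bookkeeping, which is exactly the point made in the sentence preceding the statement.
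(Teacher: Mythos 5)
Your plan is correct and is essentially the argument the paper relies on: the paper gives no independent proof here but refers to \cite{BeMi_Bou} and \cite{BrCaPr}, whose proofs proceed exactly as you describe — testing with $u^l$ and $\theta^l$, exploiting the cancellations $\langle B(u^{l-1},u^l),u^l\rangle=0$ and $\langle[u^{l-1}\cdot\nabla]\theta^l,\theta^l\rangle=0$, absorbing the linear coupling terms by Young's inequality without any smallness of $C_L$, splitting the noise term into a martingale increment plus a piece absorbed by $\|u^l-u^{l-1}\|_{V^0}^2$, and then BDG, induction on the dyadic power, and the discrete Gronwall lemma. The two points you flag as delicate (treating the two equations jointly because of the coupling, and keeping constants uniform in $N$) are precisely the ones the paper singles out in the remark preceding the statement.
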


\subsection{Strong convergence of a localized semi-implicit time Euler scheme} \label{loc-conv-multi}
Due to the bilinear terms $[u.\nabla]u$ and $[u.\nabla]\theta$, we first prove an $L^2(\Omega)$ convergence of the $\LL^2(D)$-norm of
the error,  uniformly on the time grid,  restricted to the set $\Omega_M(N)$ defined below for some $M>0$:
\begin{equation} 		\label{def-A(M)}
\Omega_M(j):= \Big\{ \sup_{s\in [0,t_j]} \|A^{\frac{1}{2}} u(s)\|_{V^0}^2 \leq M\Big\} \cap \Big\{ \sup_{s\in [0,t_j]} \|\AA^{\frac{1}{2}} \theta(s)\|_{H^0}^2 \leq M\Big\},
\; \forall j=0, ..., N,
\end{equation} 
and let $\Omega_M:= \Omega_M(N)$. 
Recall that, for $j=0, ...,N$, set $e_j:= u(t_j)-u^j$ and $\tilde{e}_j:= \theta(t_j)-\theta^j$; then, $e_0=\tilde{e}_0=0$. 
Using \eqref{def_u}, \eqref{def_t}, \eqref{Euler_u} and \eqref{Euler_t}, we deduce, for $j=1, ...,N$, $\phi\in V^1$ and $\psi\in H^1$, that
\begin{align}	\label{def_error_u}
\big( &e_j-e_{j-1}\, , \, \varphi \big) + \nu \!\int_{t_{j-1}}^{t_j} \! \! \!\big(  A^{\frac{1}{2}}  [ u(s) -   u^j ] ,  A^{\frac{1}{2}}  \varphi\big)  ds 
+ \int_{t_{j-1}}^{t_j} \!\! \big\langle B(u(s),u(s)) -   B(u^{j-1},u^j) ,  \varphi\big\rangle ds \nonumber \\
& =  \int_{t_{j-1}}^{t_{j}} \big( \Pi [\theta(s)-\theta^{j-1} ] v_2, \varphi\big) ds + 
 \int_{t_{j-1}}^{t_j} \big( [G(u(s))-G(u^{j-1}) ] dW(s) \, , \, \varphi\big),  
\end{align}
and
\begin{align}		\label{def_error_t}
\big( &\ee_j-\ee_{j-1}\, , \, \psi \big) + \kappa \!\int_{t_{j-1}}^{t_j} \! \! \big( \AA^{\frac{1}{2}}  [ \theta(s) -   \theta^j ] ,  \AA^{\frac{1}{2}}  \psi\big)  ds 
+ \int_{t_{j-1}}^{t_j} \!\! \big\langle [u(s).\nabla]\theta(s)  - [u^{j-1}.\nabla]\theta^j] ,  \psi\big\rangle ds \nonumber \\
& =  - C_L \int_{t_{j-1}}^{t_j} \big( u_2(s) - u^{j-1}_2, \psi\big) ds +  \int_{t_{j-1}}^{t_j} \big( [\GG(\theta(s))-\GG(\theta^{j-1}) ] d\WW(s) \, , \, \psi\big).  
\end{align}

In this section, we will suppose that $N$ is large enough to have $h:=\frac{T}{N} \in (0,1)$. The following result is  a crucial step towards
the rate of convergence of the implicit time Euler scheme.
 Since the model and the  scheme are different from what was used in \cite{BeMi_Bou}, we give a complete proof.
\begin{prop}	\label{th_loc_cv}
Suppose that  the  conditions {\bf (C-u)} and {\bf (C-$\theta$)} hold. \\
Let  $u_0\in L^{16+\epsilon}(\Omega ; V^1)$ and $\theta_0\in L^{16+\epsilon}(\Omega;H^1)$ for some $\epsilon>0$,
  $u,\theta$ be the solution to \eqref{def_u} and \eqref{def_t} and
$\{u^j, \theta^j\}_{j=0, ..., N}$ be the solution to \eqref{Euler_u} and \eqref{Euler_t}. Fix $M>0$ and let $\Omega_M=\Omega_M(N)$ 
be defined by \eqref{def-A(M)}. Then, 
 for $\eta\in (0,1)$, there exists a  positive constant  $C$,
 independent of $N$, such that,  for large enough $N$, 
\begin{align} 		\label{loc_cv}
\EE\Big( &1_{\Omega_M} \Big[ \max_{1\leq j\leq N} \big( \|u(t_j)-u^j\|_{V^0}^2 + \|\theta(t_j)-\theta^j\|_{H^0}^2 \big)  + \frac{T}{N} \sum_{j=1}^N 
\big[ \|A^{\frac{1}{2}} [ u(t_j) - u^j] \|_{V^0}^2 \nonumber \\
&\quad + \|\AA^{\frac{1}{2}} [ \theta(t_j) - \theta^j] \|_{H^0}^2  \Big] \Big) 
\leq  C (1+M)   e^{{\mathcal C}(M) T} \Big( \frac{T}{N}\Big)^{\eta},
\end{align}
where for some $\gamma >0$ 
\[  {\mathcal C}(M) =  \frac{9(1+\gamma) \bar{C}_4^4}{8}  \max\Big(\frac{5}{ \nu}  \, , 
 \, \frac{1}{\kappa}\Big) \, M\]  
 and $\bar{C}_4$ is the constant in the 
right hand side of the Gagliardo--Nirenberg inequality \eqref{GagNir}. 
\end{prop}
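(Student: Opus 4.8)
The plan is to test the error equations \eqref{def_error_u} and \eqref{def_error_t} against $\varphi = e_j$ and $\psi = \tilde e_j$ respectively, then sum over $j$. Writing the bilinear differences via the decompositions
\[
B(u(s),u(s)) - B(u^{j-1},u^j) = B(u(s)-u(t_{j-1}),u(s)) + B(u^{j-1},u(s)-u(t_j)) + B(e_{j-1},u(s)) + B(u^{j-1},e_j),
\]
and similarly for $[u(s).\nabla]\theta(s) - [u^{j-1}.\nabla]\theta^j$, one uses the antisymmetry relations \eqref{B} and \eqref{antisym-ut} to kill the terms $\langle B(u^{j-1},e_j),e_j\rangle$ and $\langle [u^{j-1}.\nabla]\tilde e_j,\tilde e_j\rangle$. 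The terms $\langle B(e_{j-1},u(s)),e_j\rangle$ and $\langle [e_{j-1}.\nabla]\theta(s),\tilde e_j\rangle$ are the dangerous ones: on $\Omega_M$ one bounds $\|A^{1/2}u(s)\|_{V^0}^2 \le M$ and $\|\tilde A^{1/2}\theta(s)\|_{H^0}^2\le M$, and uses the Gagliardo--Nirenberg inequality \eqref{GagNir} with $p=4$ to get, after Young's inequality, a contribution $\lesssim \e\,\|A^{1/2}e_j\|_{V^0}^2 + C(\e)M\,\|e_{j-1}\|_{V^0}^2$ (plus the cross term $\langle B(e_{j-1},u(s)),e_j\rangle$ which is split between the $u$ and $\theta$ estimates — this is exactly where the constant ${\mathcal C}(M)$ with its $\max(5/\nu,1/\kappa)$ and the factor $\frac{9(1+\gamma)\bar C_4^4}{8}$ comes from, the $1+\gamma$ absorbing lower-order $\e$-errors). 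The remaining bilinear error terms involving time increments $u(s)-u(t_j)$, $u(s)-u(t_{j-1})$, $\theta(s)-\theta(t_j)$, $\theta(s)-\theta(t_{j-1})$ are estimated by Young's inequality and controlled, after taking expectations, by Proposition~\ref{prop_regularity_H1}, which gives the $(T/N)^{\eta}$ rate; the factor $1+M$ in front of the bound arises because these increment terms get multiplied by $\|A^{1/2}u(s)\|$-type factors bounded by $M$ on $\Omega_M$.

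Next I would handle the coupling and stochastic terms. The linear coupling terms $\int_{t_{j-1}}^{t_j}(\Pi[\theta(s)-\theta^{j-1}]v_2,e_j)ds$ and $-C_L\int_{t_{j-1}}^{t_j}(u_2(s)-u_2^{j-1},\tilde e_j)ds$ are split as $\theta(s)-\theta^{j-1} = [\theta(s)-\theta(t_{j-1})] + \tilde e_{j-1}$; the increment part is absorbed using Proposition~\ref{prop_regularity_L2}(ii), and the $\tilde e_{j-1}$ part is kept as a Gronwall-type term $C\|\tilde e_{j-1}\|_{H^0}\|e_j\|_{V^0}$ (and symmetrically $C|C_L|\|e_{j-1}\|_{V^0}\|\tilde e_j\|_{H^0}$). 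The stochastic integrals: the martingale part, after using \eqref{LipG} and \eqref{LipGG}, contributes (via Davis/Burkholder--Davis--Gundy applied to the running maximum) terms controlled by $\e\,\EE(1_{\Omega_M}\max_j\|e_j\|_{V^0}^2) + C\,\EE\sum_j\|e_{j-1}\|_{V^0}^2\frac{T}{N}$ plus analogous $\tilde e$ terms, together with the quadratic-variation cross terms $\sum_j \|[G(u(s))-G(u^{j-1})]\|^2\frac{T}{N}$ handled by the Lipschitz bounds and the splitting $u(s)-u^{j-1} = [u(s)-u(t_{j-1})]+e_{j-1}$; the increment parts are again absorbed by the regularity propositions. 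One subtlety is that $\Omega_M$ is not adapted at time $t_{j-1}$, so one should either work with $1_{\Omega_M(j-1)}$ (which is ${\mathcal F}_{t_{j-1}}$-measurable and dominates $1_{\Omega_M}$) inside the sums, or note that $\Omega_M\subset\Omega_M(j)$ so that martingale properties can still be invoked carefully; this is standard but must be done cleanly.

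After collecting all the estimates one arrives at an inequality of the schematic form
\[
\EE\Big(1_{\Omega_M}\big[\max_{k\le n}(\|e_k\|_{V^0}^2+\|\tilde e_k\|_{H^0}^2) + \tfrac{T}{N}\textstyle\sum_{j\le n}(\|A^{1/2}e_j\|_{V^0}^2+\|\tilde A^{1/2}\tilde e_j\|_{H^0}^2)\big]\Big) \le a_N + {\mathcal C}(M)\tfrac{T}{N}\textstyle\sum_{j=1}^n \EE\big(1_{\Omega_M(j-1)}[\|e_{j-1}\|_{V^0}^2+\|\tilde e_{j-1}\|_{H^0}^2]\big),
\]
where $a_N = C(1+M)(T/N)^{\eta}$ collects all the increment and $a_N$-type remainders (the $\e$-terms on the right having been absorbed into the left side by choosing $\e$ small relative to $\nu,\kappa$, which is precisely what fixes the coefficients $5/\nu$ and $1/\kappa$ in ${\mathcal C}(M)$, with $\gamma$ absorbing the slack). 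The discrete Gronwall lemma then yields the bound $C(1+M)e^{{\mathcal C}(M)T}(T/N)^{\eta}$, since $n(T/N)\le T$.

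The main obstacle is the bookkeeping of the bilinear error terms $\langle B(e_{j-1},u(s)),e_j\rangle$ and $\langle[e_{j-1}.\nabla]\theta(s),\tilde e_j\rangle$: one must extract from them, via Gagliardo--Nirenberg and Young, exactly the right split between a small multiple of the dissipation terms $\|A^{1/2}e_j\|_{V^0}^2$, $\|\tilde A^{1/2}\tilde e_j\|_{H^0}^2$ (to be absorbed on the left) and the Gronwall term with the sharp constant ${\mathcal C}(M)$ — getting the constant $\frac{9(1+\gamma)\bar C_4^4}{8}\max(5/\nu,1/\kappa)$ right, rather than some larger constant, requires careful optimization of the Young's-inequality weights and is the delicate point, especially because the cross term linking the velocity and temperature error equations forces the two estimates to be carried out simultaneously rather than separately.
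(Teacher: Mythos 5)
Your proposal follows essentially the same route as the paper's proof: testing the error equations against $e_j$ and $\tilde e_j$, decomposing the bilinear differences so that antisymmetry kills the $\langle B(u^{j-1},e_j),e_j\rangle$-type terms, handling the dangerous $\langle B(e_{j-1},\cdot),e_j\rangle$ and $\langle [e_{j-1}\cdot\nabla]\theta,\tilde e_j\rangle$ terms by Gagliardo--Nirenberg and Young on $\Omega_M$, controlling the time-increment remainders via Propositions~\ref{prop_regularity_L2}--\ref{prop_regularity_H1}, treating the measurability of the localization by working with $1_{\Omega_M(j-1)}$, and concluding with the discrete Gronwall lemma. The only differences are cosmetic (a slightly different but equivalent grouping of the bilinear decomposition, and you leave the optimization of the Young weights that produces the exact constant ${\mathcal C}(M)$ as acknowledged bookkeeping, including the $(1+\alpha)$-shift from $\|A^{\frac{1}{2}}u(t_j)\|_{V^0}^2$ to $\|A^{\frac{1}{2}}u(t_{j-1})\|_{V^0}^2$ that the paper uses for adaptedness).
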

\begin{proof} 
Write \eqref{def_error_u} with $\varphi = e_j$; 
using the equality $(f,f-g)=\frac{1}{2}\big[ \|f\|_{\LL^2}^2 - \|g\|_{\LL^2}^2 + \|f-g\|_{\LL^2}^2\big]$, we obtain for $j=1, ..., N$
\begin{align}	\label{increm_error_u}
 \|e_j\|_{V^0}^2 - \|e_{j-1}\|_{V^0}^2 &+  \|e_j-e_{j-1}\|_{V^0}^2 + 2\nu h \|A^{\frac{1}{2}}  e_j\|_{V^0}^2  
 \leq2  \sum_{l=1}^7 T_{j,l}, 
 \end{align}
where, by the antisymmetry property \eqref{B}, we have 
\begin{align*}
T_{j,1}=&-\int_{t_{j-1}}^{t_j} \!\! \big\langle B\big(e_{j-1}, u^j\big) \, , \, e_j\big\rangle ds = 
-\int _{t_{j-1}}^{t_j} \!\! \big\langle B\big(e_{j-1}, u(t_j)\big) \, , \, e_j\big\rangle ds , \\
T_{j,2}=&-\int_{t_{j-1}}^{t_j}\!\!  \big\langle B\big(u(s)-u(t_{j-1})\, , u^j \big) \, e_j\big\rangle ds = - \int_{t_{j-1}}^{t_j}\!\!  \big\langle B\big(u(s)-u(t_{j-1})\, , u(t_j)\big) \, e_j\big\rangle ds , \\
T_{j,3}=&-  \int_{t_{j-1}}^{t_j}\!\!  \big\langle B\big( u(s), u(s)-u^j\big) \, , \, e_j\big\rangle ds = 
-\int_{t_{j-1}}^{t_j} \!\! \big\langle B\big( u(s), u(s)-u(t_j) \big)  \, , \,   e_j \big\rangle ds, \\
T_{j,4}=&-\nu \!\int_{t_{j-1}}^{t_j}\!\! \! \big( A^{\frac{1}{2}}  ( u(s)-u(t_j)) ,  A^{\frac{1}{2}}  e_j\big) ds, \quad 
T_{j,5}= \int_{t_{j-1}}^{t_j} \!\! \big( \Pi [\theta(s) - \theta^{j-1}] v_2 \, , \, e_j\big) ds, \\
T_{j,6}=& \int_{t_{j-1}}^{t_j}\!\! \! \big([G(u(s))-G(u^{j-1})\big] dW(s)\, ,\,  e_j-e_{j-1}\big), \\
T_{j,7}=& \int_{t_{j-1}}^{t_j}\!\!  \big([G(u(s))-G(u^{j-1}) \big] dW(s), e_{j-1}\big).
\end{align*}
We first prove upper estimates of  the terms $T_{j,l}$  for $l=1, ...,5$. 
The H\"older and Young inequalities and the Gagliardo--Nirenberg inequality \eqref{GagNir}
imply  for $\delta_1>0$ and  $j=1, ..., N$ 
 \begin{align}		\label{maj_Tj1}
 |T_{j,1}|\leq & \, \bar{C}_4^2 \, h\, \|A^{\frac{1}{2}} e_{j-1}\|_{V^0}^{\frac{1}{2}} \|e_{j-1}\|_{V^0}^{\frac{1}{2}}  
  \|A^{\frac{1}{2}} e_{j}\|_{V^0}^{\frac{1}{2}}   \|e_{j}\|_{V^0}^{\frac{1}{2}}  \|A^{\frac{1}{2}} u(t_j)\|_{V^0}  \nonumber \\
 \leq & \, \delta_1\, \nu\,  h\,  \|A^{\frac{1}{2}} e_{j-1}\|_{V^0}^2 + \delta_1\, \nu\,  h\,  \|A^{\frac{1}{2}} e_j\|_{V^0}^2  + 
 \frac{ \bar{C}_4^4}{16 \delta_1 \nu} \, h\,  \|A^{\frac{1}{2}} u(t_j)\|_{V^0}^2 \|e_{j-1}\|_{V^0}^2 \nonumber \\
 &\quad + \frac{ \bar{C}_4^4}{16 \delta_1 \nu} \, h\,  \|A^{\frac{1}{2}} u(t_j)\|_{V^0}^2 \|e_{j}\|_{V^0}^2, 
  \end{align}
  A similar computation yields for $\delta_2, \delta_3 >0$ and $\lambda_1$ defined by \eqref{lambda_1} 
\begin{align} 	\label{up_Tj2}
|T_{j,2}| \leq & \, \bar{C}_4^2  \lambda_1^{-\frac{1}{4}} \int_{t_{j-1}}^{t_j} \!\! \!  \| A^{\frac{1}{2}} [u(s)-u(t_{j-1})]\|_{V^0} 
\| A^{\frac{1}{2}} u(t_j)\|_{V^0} \| A^{\frac{1}{2}} e_j\|_{V^0}^{\frac{1}{2}}
\|e_j\|_{V^0}^{\frac{1}{2}} ds \nonumber \\
\leq & \, \delta_2 h\nu \|A^{\frac{1}{2}} e_j\|_{V^0}^2 + h \|e_j\|_{V^0}^2 \nonumber \\
&\quad + \frac{\bar{C}_4^4}{8\sqrt{\nu \delta_2 \lambda_1 }} \sup_{s\in [t_{j-1}  t_j]}  \|A^{\frac{1}{2}} u(s)\|_{V^0}^2 \int_{t_{j-1}}^{t_j} \!\!  \|A^{\frac{1}{2}} [u(s)-u(t_{j})] \|_{V^0}^2 ds, 
\end{align}
and 
\begin{align} 	\label{up_Tj3}
|T_{j,3}| \leq & \; \bar{C}_4^2 \lambda_1^{-\frac{1}{4}} 
 \int_{t_{j-1}}^{t_j} \!\! \! \| A^{\frac{1}{2}} u(s) \|_{V^0} \| A^{\frac{1}{2}} [u(s)-u(t_j)]\|_{V^0} \| A^{\frac{1}{2}} e_j \|_{V^0}^{\frac{1}{2}} \|e_j\|_{V^0}^{\frac{1}{2}}   ds \nonumber \\
\leq & \; \delta_3 \nu h \|A^{\frac{1}{2}} e_j\|_{V^0}^2 +  h \| e_j\|_{V^0}^2  \nonumber \\
&\; +  \frac{\bar{C}_4^4}{8\sqrt{\nu \delta_3 \lambda_1 }} \sup_{s\in [t_{j-1}, t_j]} \|A^{\frac{1}{2}} u(s)\|_{V^0}^2 
 \int_{t_{j-1}}^{t_j} \| A^{\frac{1}{2}} [u(s)-u(t_j)]\|_{V^0}^2 ds .
\end{align}
The Cauchy-Schwarz and Young inequalities imply  for $\delta_4>0$ 
\begin{align}		\label{up_Tj4}
|T_{j,4}|\leq &\;  \nu \int_{t_{j-1}}^{t_j} \!\!  \| A^{\frac{1}{2}} e_j\|_{V^0} \|A^{\frac{1}{2}} [u(s)-u(t_j)] \|_{V^0}^2 ds \nonumber \\
\leq & \; \delta_4 \nu h \|A^{\frac{1}{2}} e_j\|_{V^0}^2 + \frac{\nu}{4\delta_4} \int_{t_{j-1}}^{t_j} \!\! \| A^{\frac{1}{2}} [u(s)-u(t_j)]\|_{V^0}^2 ds. 
\end{align}
Finally, using once more the Cauchy-Schwarz and Young inequalities, we deduce 
\begin{align}		\label{up_Tj5}
|T_{j,5}|\leq &\; \int_{t_{j-1}}^{t_j} \Big( [\theta(s)-\theta(t_{j-1})] v_2 , e_j) ds + h \| \ee_{j-1} \|_{H^0}  \|e_j\|_{V^0}  \nonumber \\
\leq & \; h \|e_j\|_{V^0}^2 + \frac{h}{2}   \| \ee_{j-1}\|_{H^0}^2 
+ \frac{1}{2} \int_{t_{j-1}}^{t_j} \!\!  \|\theta(s)-\theta(t_{j-1})\|_{H^0}^2 ds.
\end{align}
 We next prove similar estimates for $\theta$.  Writing and \eqref{def_error_t} with $\psi=\theta^j$ we deduce 
\begin{align} 
\|\ee_j\|_{H^0}^2 - \|\ee_{j-1}\|_{H^0}^2 &+  \|\ee_j-\ee_{j-1}\|_{H^0}^2 + 
2\kappa h \|\AA^{\frac{1}{2}}  \ee_j\|_{H^0}^2   \leq2 \sum_{l=1}^7 \tilde{T}_{j,l},		\label{incrfem_error_t}
\end{align}
where using  the antisymmetry property \eqref{antisym-ut} we have 
\begin{align*}
\tilde{T}_{j,1}=&-\int_{t_{j-1}}^{t_j} \!\! \big\langle [e_{j-1}.\nabla] \theta^j  \, , \, \ee_j \big\rangle ds = 
-\int _{t_{j-1}}^{t_j} \!\! \big\langle  [e_{j-1}.\nabla] \theta(t_j) \, , \, \ee_j\big\rangle ds , \\
\tilde{T}_{j,2}=&-\int_{t_{j-1}}^{t_j}\!\! \! \! \!\big\langle \big[ \big((u(s)-u(t_{j-1})\big) .\nabla \big]  \theta^j   , \ee_j\big\rangle ds 
= -\int_{t_{j-1}}^{t_j}\!\! \!  \big\langle \big[ \big((u(s)-u(t_{j-1})\big) .\nabla \big]   \theta(t_j)   \, ,\, \ee_j\big\rangle ds,\\
\tilde{T}_{j,3}=&-  \int_{t_{j-1}}^{t_j}\!\!  \big\langle [u(s).\nabla] \big(\theta(s)-\theta^j \big)  \, , \, \ee_j\big\rangle ds = 
\int_{t_{j-1}}^{t_j} \!\! \big\langle  [u(s).\nabla]  \big(\theta(s)-\theta(t_j)\big) , \ee_j  \big\rangle ds, \\
\tilde{T}_{j,4}=&-\nu \!\int_{t_{j-1}}^{t_j}\!\! \! \big( \AA^{\frac{1}{2}}  ( \theta(s)-\theta(t_j)) ,  \AA^{\frac{1}{2}}  \ee_j\big) ds,  \quad
\tilde{T}_{j,5}= -C_L \! \int_{t_{j-1}}^{t_j} \!\! ( u_2(s)-u_2^{j-1} , \ee_j) ds, \\
\tilde{T}_{j,6}=& \int_{t_{j-1}}^{t_j}\!\! \! \big([\GG(\theta(s))-\GG(\theta^{j-1}) d\WW(s)\, ,\,  \ee_j-\ee_{j-1}\big), \\
\tilde{T}_{j,7}=& \int_{t_{j-1}}^{t_j}\!\!  \big([G(u(s))-G(u^{j-1}) \big] dW(s), e_{j-1}\big).
\end{align*}
We next prove upper estimates of $\tilde{T}_{j,i}$ for $i=1, ..., 5$. 
 Using the H\"older, Gagliardo and Young inequalities, we deduce for $\delta_5, \tilde{\delta}_1>0$  and  $j=1, ..., N$
\begin{align}		\label{up_tildeTj1}
|\tilde{T}_{j,1}| \leq &\;  h \; \bar{C}_4^2 \; \|e_{j-1}\|_{V^0}^{\frac{1}{2}} \|A^{\frac{1}{2}} e_{j-1}\|_{V^0}^{\frac{1}{2}} \|\tilde{A}^{\frac{1}{2}} \theta(t_j)\|_{H^0} \|\ee_j\|_{H^0}^{\frac{1}{2}}
\| \tilde{A}^{\frac{1}{2}} \ee_j\|_{H^0}^{\frac{1}{2}} \nonumber \\
\leq & \; \delta_5 \nu h  \|A^{\frac{1}{2}} e_{j-1}\|_{V^0}^2 + \tilde{\delta}_1 \kappa h \|\tilde{A}^{\frac{1}{2}} \ee_j\|_{H^0}^2  \nonumber \\
&\;  + \frac{\bar{C}_4^4}{16 \nu \delta_5}h  \|\tilde{A}^{\frac{1}{2}} \theta(t_j)\|_{H^0}^2
\|e_{j-1}\|_{V^0}^2 +  \frac{\bar{C}_4^4}{16 \kappa \tilde{\delta}_1}h  \|\tilde{A}^{\frac{1}{2}} \theta(t_j)\|_{H^0}^2  \| \ee_j\|_{H^0}^2 .
\end{align}
Similar computations yield for $\tilde{\delta}_2>0$ and ${\lambda}_1$ defined by \eqref{lambda_1} 
\begin{align}		\label{up_tildeTj2}
|\tilde{T}_{j,2}| \leq &\;  \bar{C}_4^2  \lambda_1^{-\frac{1}{4}}  \int_{t_{j-1}}^{t_j}\!\!   \|A^{\frac{1}{2}} [u(s)-u(t_{j-1})] \|_{V^0} \| \tilde{A}^{\frac{1}{2}} \theta(t_j)\|_{H^0}
\| \ee_j\|_{H^0}^{\frac{1}{2}} \| \tilde{A}^{\frac{1}{2}} \ee_j\|_{H^0}^{\frac{1}{2}} ds
 \nonumber \\
\leq & \; \tilde{\delta}_2 \kappa h \|\tilde{A}^{\frac{1}{2}} \ee_j\|_{H^0}^2  +h  \| \ee_j\|_{H^0}^2  \nonumber \\%
& \; + \frac{\bar{C}_4^4}{8 \sqrt{\kappa \tilde{\delta}_2 \lambda_1}}  \sup_{s\in [t_{j-1}, t_j]} \| \tilde{A}^{\frac{1}{2}} \theta(s) \|_{H^0}^2
 \int_{t_{j-1}}^{t_j} \!\! \|u(s)-u(t_{j-1})\|_{V^0}^2 ds, 
\end{align}
while a similar argument yields  for $\tilde{\delta}_3>0$ 
\begin{align}		\label{up_tildeTj3}
|\tilde{T}_{j,3}| \leq &\;  \bar{C}_4^2 \lambda_1^{-\frac{1}{4}} \; \int_{t_{j-1}}^{t_j}\!\!    \|A^{\frac{1}{2}} u(s) \|_{V^0}  \| \tilde{A}^{\frac{1}{2}}[ \theta(s)- \theta(t_j)]\|_{H^0}
\| \ee_j\|_{H^0}^{\frac{1}{2}} \| \tilde{A}^{\frac{1}{2}} \ee_j\|_{H^0}^{\frac{1}{2}} ds
 \nonumber \\
\leq & \; \tilde{\delta}_3 \kappa h \|\tilde{A}^{\frac{1}{2}} \ee_j\|_{H^0}^2  +  h \| \ee_j\|_{H^0}^2  \nonumber \\
&\;  + \frac{\bar{C}_4^4}{8 \sqrt{\kappa \tilde{\delta}_3 \lambda_1} } \sup_{s\in [t_{j-1}, t_j]}  \| A^{\frac{1}{2}} u(s) \|_{V^0}^2  \int_{t_{j-1}}^{t_j} 
\!\!  \|\tilde{A}^{\frac{1}{2}}[ \theta(s)-\theta(t_{j})]\|_{H^0}^2 ds .
\end{align}
The Cauchy-Schwarz and Young inequalities imply for $\tilde{\delta}_4>0$
\begin{align}		\label{up_tildeTj4}
|\tilde{T}_{j,4}|\leq &  \tilde{\delta}_4 \kappa h  \|\tilde{A}^{\frac{1}{2}} \ee_j\|_{H^0}^2  + \frac{\kappa}{4 \tilde{\delta}_4 }  \int_{t_{j-1}}^{t_j}  \|\tilde{A}^{\frac{1}{2}} [ \theta(s)-\theta(t_j)|\|_{H^0}^2 ds,
\end{align}
and
\begin{align}		\label{up_tildeTj5}
|\tilde{T}_{j,5}|\leq &\;  |C_L| \int_{t_{j-1}}^{t_j} \!\!  \|u(s)-u(t_{j-1})\|_{V^0} \|\ee_j\|_{H^0} ds + h \, |C_L|\,  \|e_{j-1}\|_{V^0} \|\ee_j\|_{H^0} \nonumber \\
\leq & \;  h\, |C_L|\,    \|\ee_j\|_{H^0}^2 +   h \, \frac{|C_L|}{2}\,    \|e_{j-1}\|_{V^0}^2 + \frac{|C_L|}{2} \int_{t_{j-1}}^{t_j} \|u(s)-u(t_{j-1})\|_{V^0}^2 ds. 
\end{align}
Note that the sequence of subsets $\{ \Omega_M(j)\}_{0\leq j\leq N}$ is decreasing. Therefore, since $e_0=\ee_0=0$, given $L=1, ..., N$, 
 we obtain
 \begin{align*}
 \max_{1\leq J\leq L} &\sum_{j=1}^J   1_{\Omega_M({j-1})}  \big[ \|e_j\|_{V^0}^2 - \| e_{j-1}\|_{V^0}^2 + \|\ee_j\|_{H^0}^2  - \| \ee_{j-1}\|_{H^0}^2 \big]  \\
&\;  = \max_{1\leq J\leq L}  \Big[  \Big( 1_{\Omega_M({J-1})} \big[ \|e_J\|_{V^0}^2 + \|\ee_J\|_{H^0}^2 \big] \Big) \\
&\qquad  + \sum_{j=2}^J \big( 1_{\Omega_M(j-2)} - 1_{\Omega_M({j-1})}\big) \big[ \|e_{j-1}\|_{V^0}^2 + \|\ee_{j-1} \|_{H^0}^2 \big]  \Big]\\
&\; \geq \max_{1\leq J\leq L}  \Big( 1_{\Omega_M({J-1})} \big[ \|e_J\|_{V^0}^2 + \|\ee_J\|_{H^0}^2 \big] \Big).
 \end{align*} 
Given non negative real numbers $a(i,J)$ for $i=1, ...,3$ and $J=1, ..., L$, we have \linebreak 
$ \sum_{i=1}^3 \max_{1\leq J\leq L}  a(i,J) \leq 3 \max_{1\leq J\leq L} \sum_{i=1}^3 a(i,J)$. 

 In the right hand side of \eqref{maj_Tj1} we encounter the product $\|e_{j-1}\|_{V^0}^2 \|A^{\frac{1}{2}} u(t_j)\|_{V^0}^2$. 
For measurability issues, we have to replace $\|A^{\frac{1}{2}} u(t_j)\|_{V^0}^2$ by $\|A^{\frac{1}{2}} u(t_{j-1} )\|_{V^0}^2$. 
Young's inequality implies  for any $\alpha >0$ 
\begin{align*} 
h\|A^{\frac{1}{2}} u(t_j)\|_{V^0}^2 \leq &\;  h (1+\alpha)  \|A^{\frac{1}{2}} u(t_{j-1})\|_{V^0}^2  \\
&\; +   \big( 1+\alpha^{-1}\big) \int_{t_{j-1}}^{t_j}  \big( \|A^{\frac{1}{2}} [u(s)-u(t_j)| \|_{V^0}^2 + 
\|A^{\frac{1}{2}} [u(s)-u(t_{j-1})] \|_{V^0}^2 \big) ds.
\end{align*} 
A similar upper estimate relies $h\|A^{\frac{1}{2}}u(t_j)\|_{V^0}^2 $ and $h\| A^{\frac{1}{2}} u(t_{j-2})\|_{V^0}^2 $ (using $u(t_{j-1)}$ as an intermediate step) for $j=2, ..., N$ 
as well as $h \|\AA^{\frac{1}{2}} \theta(t_j)\|_{H^0}^2 $ and $h \| \AA^{\frac{1}{2}} \theta(t_{j-1})\|_{H^0}^2 $ (resp.  $h \|\AA^{\frac{1}{2}} \theta(t_j)\|_{H^0}^2 $ and
$h \|\AA^{\frac{1}{2}} \theta(t_{j-2})\|_{H^0}^2 $) for $j=1, ..., N$ (resp. for $j=2, ..., N$).\\
Hence, adding the upper estimates \eqref{increm_error_u}--\eqref{up_tildeTj5}  and using Young's inequality, we deduce that for 
$2\delta_1 + \sum_{j=2}^5 \delta_j < \frac{1}{3}$ and $\sum_{j=1}^4 \tilde{\delta}_j<\frac{1}{3}$, we have 
 for every $\alpha >0$
 \vspace{-6pt}
\begin{align}		\label{maj_error-1}
 \frac{1}{3}& \max_{1\leq J\leq L}  \Big( 1_{\Omega_M({J-1})} \big[ \|e_J\|_{V^0}^2 + \|\ee_J \|_{H^0}^2 \big] \Big)  \nonumber \\
 &\quad +
 \frac{1}{3} \sum_{j=1}^L 1_{\Omega_M(j-1)} \big( \|e_j - e_{j-1}\|_{V^0}^2 + \|\ee_j-\ee_{j-1}\|_{H^0}^2 \big) 		\nonumber \\
 &\quad + 2  \sum_{j=1}^L 1_{\Omega_M(j-1)}\, h\,  \Big[ \nu \Big( \frac{1}{3}-2\delta_1 - \sum_{i=2}^5 \delta_i \Big) \|A^{\frac{1}{2}} e_j\|_{V^0}^2 
 + \kappa \Big( \frac{1}{3}-\sum_{i=1}^4 \tilde{\delta}_i \Big) \|\AA^{\frac{1}{2}} \ee_j\|_{H^0}^2  \Big]  		\nonumber \\
 & \leq \,2  h \sum_{j=1}^{L-1}  1_{\Omega_M(j-1)} \|e_j\|_{V^0}^2 \Big[ \frac{(1+\alpha) \bar{C}_4^2 }{16\nu } \Big( \frac{2\|A^{\frac{1}{2}} u(t_{j-1})\|_{V^0}^2  }{\delta_1}  
  + \frac{\| \AA^{\frac{1}{2}} \theta(t_{j-1})\|_{H^0}^2}{\delta_5}  \Big)  +3 + \frac{|C_L|}{2} \Big]		\nonumber \\
 &\qquad + 2 h \sum_{j=1}^{L-1} 1_{\Omega_M(j-1)} \| \ee_j\|_{H^0}^2 \Big[ \frac{(1+\alpha) \bar{C}_4^2}{16 \tilde{\delta}_1 \kappa} 
 \| \AA^{\frac{1}{2}} \theta(t_{j-1})\|_{H^0}^2 +1+|C_L|   \Big]  + Z_L 	\nonumber \\
 &\qquad + 2 \max_{1\leq J\leq L} \sum_{j=1}^J 1_{\Omega_M(j-1)} \big[ T_{j,6}+\tilde{T}_{j,6}\big]  
 + 2 \max_{1\leq J\leq L} \sum_{j=1}^J 1_{\Omega_M(j-1)} \big[ T_{j,7}+\tilde{T}_{j,7}\big],
 \end{align} 
 where 
{\small \begin{align}	\label{def_ZL}
 Z_L = &\; C(\bar{C}_4, \delta_1, \nu, \kappa, \tilde{\delta}_1, C_L) \; h \|e_L\|_{V^0}^2 \Big( \sup_{s\in [0,T]} \|A^{\frac{1}{2}} u(s)\|_{V^0}^2 + \sup_{s\in [0,T]} \|\AA^{\frac{1}{2}} \theta(s)\|_{H^0}^2  +1\Big) \nonumber \\
 &+ C(\bar{C}_4, \kappa, \tilde{\delta}_1, C_L) \; h \| \ee_L\|_{H^0}^2 \Big( \sup_{s\in [0,T]} \|\AA^{\frac{1}{2}} \theta(s)\|_{H^0}^2+1\Big) \nonumber \\
 & + C(\bar{C}_4, \delta_2, \delta_3, \delta_4, \delta_5, \tilde{\delta}_2, \nu, \kappa, \lambda_1, C_L)
 \Big(  \sum_{j=1}^L \! \int_{t_{j-1}}^{t_j} \!\!\big[ \|  u(s)-u(t_{j-1})\big\|_{V^1}^2 + \|u(s)-u(t_{j})\|_{V^1}^2 \big] ds \Big) \nonumber \\
&\quad \times  \Big( \sup_{s\in [0,T]} \|A^{\frac{1}{2}} u(s)\|_{V^0}^2 + \sup_{s\in [0,T]} \| \AA^{\frac{1}{2}} \theta(s)\|_{H^0}^2 +\max_{1\leq j\leq L} \|u^j\|_{V^0}^2 +1\Big) \nonumber \\
 & + C(\bar{C}_4, \tilde{\delta}_1, \tilde{\delta}_3, \tilde{\delta}_4, \nu, \kappa, \lambda_1)  \Big(  \sum_{j=1}^L \int_{t_{j-1}}^{t_j}  \big[ \|\theta(s)-\theta(t_j)\|_{H^1}^2 + \|\theta(s)-\theta(t_{j-1})\|_{H^1}^2\big]  ds \Big) \nonumber \\
 &\quad \times \Big( \sup_{s\in [0,T]} \|A^{\frac{1}{2}} u(s)\|_{V^0}^2 +\sup_{s\in [0,T]} \|\AA^{\frac{1}{2}} \theta (s)\|_{H^0}^2 +\max_{1\leq j\leq L} \|u^j\|_{V^0}^2 + 
 \max_{1\leq j\leq L} \|\theta^j\|_{H^0}^2 +1 \Big)  .
 \end{align} }
 Using the upper estimates  \eqref{def_ZL}, taking expected values and using the Cauchy--Schwarz  inequality, as well as
  the upper estimates \eqref{mom_ut_L2}--\eqref{mom_t_H1}, \eqref{mom_increm_u_1}, \eqref{mom_increm_t_1},   
  and \eqref{mom_scheme_0}, we deduce 
\begin{align}		\label{mom_ZL}
 &\EE(Z_L) \leq  \, C\, h \Big\{ \EE\Big( \sup_{s\in [0,T]}\big[  \|u(s)\|_{V^0}^4 + \|\theta(s)\|_{H^0}^4 \big] \max_{0\leq j\leq N} \big[ \|u^j\|_{V^0}^4  + \|\theta^j\|_{H^0}^4 \big]\Big)   \Big\}^{\frac{1}{2}} \nonumber \\
 &\qquad \times 
 \Big\{ \EE\Big( \sup_{s\in [0,T]} \|A^{\frac{1}{2}} u(s)\|_{V^0}^4 + \sup_{s\in [0,T]} \| \AA^{\frac{1}{2}} \theta(s)\|_{H^0}^4 +1\Big) \Big\}^{\frac{1}{2}} \nonumber \\
 &\quad + C \Big\{ \EE\Big( \Big|   \sum_{j=1}^N \int_{t_{j-1}}^{t_j}  \big[ \|u(s)-u(t_j)\|_{V^1}^2 +  \|u(s)-u(t_{j-1})\|_{V^1}^2  \nonumber \\
 &\qquad \qquad \qquad \qquad  +  \|\theta(s)-\theta(t_j)\|_{H^1}^2  +  \|\theta(s)-\theta(t_{j-1})\|_{H^1}^2 \big] 
 ds \Big|^2 \Big) \Big\}^{\frac{1}{2}} \nonumber \\
 &\quad \quad \times \Big\{ \EE \Big( \sup_{s\in [0,T]} \big[ \|A^{\frac{1}{2}} u(s)\|_{V^0}^4 + \| \AA^{\frac{1}{2}} \theta(s)\|_{H^0}^4\big] +
 \max_{0\leq j\leq N}  \big[ \|u^j\|_{V^0}^4  + \|\theta^j\|_{H^0}^4 \big] +1\Big) \Big\}^{\frac{1}{2}} \nonumber \\
&\leq C h^\eta,
\end{align}
 for   $\eta \in (0,1)$ and  every $L=1, ..., N$, with some constant $C$ independent of $L$ and $N$. 

 The Cauchy--Schwarz and Young inequalities imply that
 \begin{align}		\label{maj_Tj6}
2 \max_{1\leq J\leq L} \sum_{j=1}^J &1_{\Omega_M(j-1)} |T_{j,6}|  \leq \frac{1}{3} \sum_{j=1}^L 1_{\Omega_M(j-1)}  \|e_j-e_{j-1}\|_{V^0}^2 \nonumber \\
 &+ 3 \sum_{j=1}^L 1_{\Omega_M(j-1)} \Big\|\int_{t_{j-1}}^{t_j} \big[ G(u(s))-G(u^{j-1})\big] dW(s)\Big\|_{V^0}^2, \\
 2 \max_{1\leq J\leq L} \sum_{j=1}^J  &1_{\Omega_M(j-1)} |\tilde{T}_{j,6}|  \leq \frac{1}{3} \sum_{j=1}^L 1_{\Omega_M(j-1)}  \|\ee_j-\ee_{j-1}\|_{H^0}^2 \nonumber \\
 &+  3 \sum_{j=1}^L 1_{\Omega_M(j-1)} \Big\|\int_{t_{j-1}}^{t_j} \big[ \GG(\theta(s))-\GG(\theta^{j-1})\big] d\WW(s)\Big\|_{H^0}^2. 
 	\label{maj_TTj6}
 \end{align}
 Furthermore, the Lipschitz  conditions \eqref{LipG},  \eqref{LipGG}, the inclusion $\Omega_M(j-1)\subset \Omega_M(j-2)$ for $j=2, ...,N$
 and the upper estimates \eqref{increm_u_L2}  imply that
\begin{align}		\label{mom_part_sumTj6}
 \EE\Big( & \sum_{j=1}^L 1_{\Omega_M(j-1)} \Big\|\int_{t_{j-1}}^{t_j} \big[ G(u(s))-G(u^{j-1})\big] dW(s)\Big\|_{V^0}^2 \big)	\nonumber \\
 &\;  \leq 
  \sum_{j=1}^L \EE\Big( \int_{t_{j-1}}^{t_j} 1_{\Omega_M(j-1)} L_1 \| u(s)-u^{j-1}\|_{V^0}^2  {\rm Tr}(Q) ds \Big)	\nonumber \\
  &\; \leq 2 L_1{\rm Tr}(Q)\, h\,  \sum_{j=2}^L \EE( 1_{\Omega_M(j-2)}\|e_{j-1} \|_{V^0}^2  \big)
  + C \sum_{j=1}^L \EE\Big( \int_{t_{j-1}}^{t_j} \|u(s)-u(t_{j-1})\|_{V^0}^2 ds\Big)\nonumber \\
  &\; \leq 2 L_1 {\rm Tr}(Q) \, h\,  \sum_{j=2}^L \EE( 1_{\Omega_M(j-2)}\|e_{j-1} \|_{V^0}^2  \big) + C h, 
  \end{align}
  and a similar computation using  \eqref{increm_t_L2} yields
  \begin{align} 
  \EE\Big(  \sum_{j=1}^L &1_{\Omega_M(j-1)} \Big\|\int_{t_{j-1}}^{t_j} \big[ \GG(\theta(s))-\GG(\theta^{j-1})\big] d\WW(s)\Big\|_{H^0}^2 \big)	\nonumber \\
 &\;  \leq 2 \tilde{L}_1 {\rm Tr}(\QQ)\, h\,  \sum_{j=2}^L \EE( 1_{\Omega_M(j-2)}\|\ee_{j-1} \|_{H^0}^2  \big) + C h.	\label{mom_part_sumTTj5}
 \end{align}%

 Finally, the Davis inequality, the inclusion $\Omega_M(J-1)\subset \Omega_M(j-1)$ for $j\leq J$, the local property of stochastic integrals,
 the Lipschitz condition \eqref{LipG}, the Cauchy--Schwarz and Young inequalities and the upper estimate \eqref{increm_u_L2} imply,
  for $\lambda >0$, that
 \begin{align} 	\label{mom_max_Tj7}
 \EE\Big( & \max_{1\leq J\leq L} 1_{\Omega_M(J-1)} \sum_{j=1}^J T_{j7} \Big) 		\nonumber \\
 &\; \leq 3 \sum_{j=1}^L \EE\Big( \Big\{ 1_{\Omega_M(j-1)} \int_{t_{j-1}}^{t_j} 
  \|G(u(s))-G(u^{j-1})\|_{{\mathcal L}(K;V^0)}^2  {\rm Tr}(Q) \|e_{j-1}\|_{V^0}^2 ds \Big\}^{\frac{1}{2}} \Big)		\nonumber \\
 &\; \leq 3 \,\sum_{j=1}^L \EE\Big[ \Big(  \max_{1\leq j\leq L} 1_{\Omega_M(j-1)} \|e_{j-1}\|_{V^0} \Big) \Big\{ \int_{t_{j-1}}^{t_j} L_1
  {\rm Tr}(Q)  \|u(s)-u^{j-1}\|_{V^0}^2  ds
 \Big\}^{\frac{1}{2}} \Big) 		\nonumber \\
 &\; \leq  \lambda\EE \Big(  \max_{1\leq j\leq L} 1_{\Omega_M(j-1)} \|e_{j-1}\|_{V^0}^2 \Big) + 
 C(\lambda) \EE\Big( \sum_{j=1}^L \int_{t_{j-1}}^{t_j} L_1 {\rm Tr}(Q)  \|u(s)-u^{j-1}\|_{V^0}^2  ds \Big) 		\nonumber \\
 &\; \leq  \lambda\EE \Big(  \max_{1\leq j\leq L} 1_{\Omega_M(j-2)} \|e_{j-1}\|_{V^0}^2 \Big) +  C(\lambda) h \sum_{j=1}^L \EE(\|e_{j-1}\|_{V^0}^2) 
 + C(\lambda)\, h.
 \end{align} 
A similar argument, using the Lipschitz condition \eqref{LipGG} and  \eqref{increm_t_L2},  yields  for $\lambda >0$,
 \vspace{-6pt}
\begin{align} 	\label{mom_max_TTj6}
 \EE\Big(  \max_{1\leq J\leq L} &1_{\Omega_M(J-1)} \sum_{j=1}^J \tilde{T}_{j7} \Big) 	\nonumber \\
 &\leq  \lambda\EE \Big(  \max_{1\leq j\leq L} 1_{\Omega_M(j-2)} \|\ee_{j-1}\|_{H^0}^2 \Big) + C(\lambda)  h \sum_{j=1}^L \EE(\|\tilde{e}_{j-1}\|_{V^0}^2)  
 + C h.
 \end{align}
Collecting the upper estimates \eqref{maj_Tj1}--\eqref{mom_max_TTj6},  we obtain  for $\eta\in (0,1)$,  $2\delta_1 + \sum_{i=2}^5 \delta_i<\frac{1}{3}$, \linebreak[4]
 $\sum_{i=1}^4 \tilde{\delta}_i<\frac{1}{3}$,
 and $\alpha,\lambda >0$, 
{\small \begin{align}		\label{mom_max_L2-1} 
 \EE\Big( &\max_{1\leq J\leq N} 1_{\Omega_M(j-1)} \big[ \|e_j\|_{V^0}^2 + \| \ee_j\|_{H^0}^2 \big] \Big) 	\nonumber \\
 &\quad + \EE\Big( \sum_{j=1}^N 1_{\Omega_M(j-1)}
 \Big[ \nu\Big( 2-6\Big[ 2\delta_1+ \sum_{i=2}^5 \delta_i \Big]\Big)  \|A^{\frac{1}{2}} e_j\|_{V^0}^2
  + \kappa \Big( 2-6\sum_{i=1}^4 \tilde{\delta}_i\Big)  \|\AA^{\frac{1}{2}} \ee_j\|_{V^0}^2 \Big] \Big)		\nonumber \\
  \leq & \, h\, \sum_{j=1}^{N-1} \EE\Big( 1_{\Omega_M(j-1)} \|e_j\|_{V^0}^2 
 \Big[ \frac{3(1+\alpha) \bar{C}_4^4 }{8 \nu} \Big( \frac{2}{\delta_1} + \frac{1}{\delta_5}\Big) M
  +C  \Big]  \Big)		\nonumber \\
  &\quad + h\, \sum_{j=1}^{N-1} \EE\Big( 1_{\Omega_M(j-1)} \|\ee_j\|_{H^0}^2 
  \Big[ \frac{3(1+\alpha) \bar{C}_4^4 }{8 \tilde{\delta_1} \kappa}  M  +C  \Big]  \Big)		\nonumber \\
&\quad   + 6 \lambda \EE\Big( \max_{1\leq j\leq N} 1_{\Omega_M(j-1)} \big[ \|e_{j-1}\|_{V^0}^2 + \| \ee_j\|_{H^0}^2 \big] \Big) + C h^\eta.
   \end{align} }%

 Therefore, given $\gamma \in (0,1)$, choosing $\lambda \in (0, \frac{1}{6})$  and $\alpha >0$ such that $\frac{1+\alpha}{1-6\lambda} < 1+\gamma$, 
 neglecting the sum in the left hand side and using the discrete Gronwall lemma, we deduce  that for $\eta \in (0,1)$
 \begin{align}		\label{mom_loc_maxut_L2}
  \EE\Big( &\max_{1\leq J\leq N} 1_{\Omega_M(j-1)} \big[ \|e_j\|_{V^0}^2 + \| \ee_j\|_{H^0}^2 \big] \Big)  \leq C(1+M) e^{T {\mathcal C}(M)} h^\eta ,
 \end{align} 
 where 
 \[ {\mathcal C}(M):= \frac{3(1+\gamma) \bar{C}_4^4}{8}  \max\Big(\frac{2}{\delta_1 \nu} + \frac{1}{\delta_5 \nu } \, , 
 \, \frac{1}{\tilde{\delta}_1 \kappa}\Big) \, M,  \]
  for $2 \delta_1+\delta_5 <\frac{1}{3}$ and $\tilde{\delta}_1<\frac{1}{3}$ (and choosing $\delta_i, i=3,3, 4$ and $\tilde{\delta}_i, i=2,3,4$ such
 that $2\delta_1+ \sum_{i=2}^5 \delta_i < \frac{1}{3}$ and $\sum_{i=1}^4 \tilde{\delta}_i < \frac{1}{3}$).  
 Let $\delta_5<\frac{1}{15}$ and $\delta_1=2\delta_5$. 
 Then, for some  $\gamma >0$, we have that
 \[  {\mathcal C}(M) =  \frac{9(1+\gamma) \bar{C}_4^4}{8}  \max\Big(\frac{5}{ \nu}  \, , 
 \, \frac{1}{\kappa}\Big) \, M.  \]
 Plugging the upper estimate \eqref{mom_loc_maxut_L2} in \eqref{mom_max_L2-1}, we conclude the proof of \eqref{loc_cv}. 
\end{proof}

\section{Rate of Convergence in Probability and in $L^2(\Omega)$}		\label{sec_rate_proba} 
In this section, we deduce from Proposition \ref{th_loc_cv} the convergence in probability of the implicit time Euler scheme with the ``optimal'' 
rate of convergence of
``almost 1/2'' and a logarithmic speed of convergence in $L^2(\Omega)$. The presence of the bilinear term in the It\^o formula for
 $\|\AA^{\frac{1}{2}} \theta(t)\|_{H^0}^2$  does not enable us to prove exponential moments for this norm, which prevents us from using
 the general framework presented in \cite{BeMi_FEM}  to prove a polynomial rate  for the strong convergence.
 \subsection{Rate of Convergence in Probability}
 In this section, we deduce the rate of the convergence in probability (defined in \cite{Pri}) from Propositions \ref{prop_u_V1},
  \ref{prop_mom_t_H1}, \ref{prop_mom_scheme}
  and \ref{th_loc_cv}.  
  The short proof is similar to that on \cite[Theorem~xx]{BeMi_Bou}; it is included for the sake of completeness.
 \begin{theorem} 		\label{th_cv_proba}
Suppose that  the  conditions {\bf (C-u)} and {\bf (C-$\theta$)} hold. 
Let  $u_0\in L^{16+\epsilon}(\Omega ; V^1)$ and $\theta_0\in L^{16+\epsilon}(\Omega;H^1)$ for some $\epsilon>0$,
  $u,\theta$ be the solution to \eqref{def_u} and \eqref{def_t} and
$\{u^j, \theta^j\}_{j=0, ..., N}$ be the solution to \eqref{Euler_u} and \eqref{Euler_t}. 
 Then, for every $\eta \in (0,1)$, we have
\begin{equation}		\label{speed_proba} 
\lim_{N\to \infty} P\Big( \max_{1\leq J\leq N} \big[ \|e_J\|_{V^0}^2 + \|\ee_J\|_{H^0}^2 \big] + \frac{T}{N} \sum_{j=1}^N \big[ \|A^{\frac{1}{2}} e_j\|_{V^0}^2
+ \| \AA^{\frac{1}{2}} \ee_j \|_{H^0}^2 \big] \geq  N^{-\eta} \Big) = 0 .
\end{equation} 
\end{theorem}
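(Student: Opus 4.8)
The plan is to derive the convergence in probability directly from the localized $L^2(\Omega)$ estimate \eqref{loc_cv} of Proposition~\ref{th_loc_cv} together with the moment bounds of Propositions~\ref{prop_u_V1}, \ref{prop_mom_t_H1} and \ref{prop_mom_scheme}. The idea is the standard localization argument: on the set $\Omega_M=\Omega_M(N)$ the error is small by \eqref{loc_cv}, while the complement $\Omega_M^c$ has small probability, uniformly in $N$, provided $M$ is large. Concretely, fix $\eta\in(0,1)$ and pick $\eta'\in(\eta,1)$. Write $X_N$ for the random variable inside the probability in \eqref{speed_proba}, i.e.
\[
X_N:=\max_{1\leq J\leq N}\big[\|e_J\|_{V^0}^2+\|\ee_J\|_{H^0}^2\big]+\frac{T}{N}\sum_{j=1}^N\big[\|A^{\frac12}e_j\|_{V^0}^2+\|\AA^{\frac12}\ee_j\|_{H^0}^2\big].
\]
Then for any $M>0$,
\[
\PP\big(X_N\geq N^{-\eta}\big)\leq \PP\big(\{X_N\geq N^{-\eta}\}\cap\Omega_M\big)+\PP\big(\Omega_M^c\big).
\]

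First I would bound the first term. By Markov's inequality and Proposition~\ref{th_loc_cv},
\[
\PP\big(\{X_N\geq N^{-\eta}\}\cap\Omega_M\big)\leq N^{\eta}\,\EE\big(1_{\Omega_M}X_N\big)\leq N^{\eta}\,C(1+M)\,e^{\mathcal C(M)T}\Big(\frac{T}{N}\Big)^{\eta'}=C(1+M)e^{\mathcal C(M)T}T^{\eta'}N^{\eta-\eta'},
\]
which, with $M$ fixed, tends to $0$ as $N\to\infty$ because $\eta-\eta'<0$. For the second term, I would use the fact that $\Omega_M^c$ is contained in the union of the events $\{\sup_{s\in[0,T]}\|A^{1/2}u(s)\|_{V^0}^2>M\}$ and $\{\sup_{s\in[0,T]}\|\AA^{1/2}\theta(s)\|_{H^0}^2>M\}$, and apply Markov's inequality together with the moment estimates \eqref{mom_u_V1} (which controls $\EE(\sup_{t\leq T}\|u(t)\|_{V^1}^{2p})$, hence $\EE(\sup_{t\leq T}\|A^{1/2}u(t)\|_{V^0}^{2p})$) and \eqref{mom_t_H1} (which controls $\EE(\sup_{t\leq T}\|\theta(t)\|_{H^1}^{2p})$, hence $\EE(\sup_{t\leq T}\|\AA^{1/2}\theta(t)\|_{H^0}^{2p})$), valid since the hypotheses $u_0\in L^{16+\epsilon}(\Omega;V^1)$ and $\theta_0\in L^{16+\epsilon}(\Omega;H^1)$ guarantee these moments for $p$ up to at least $2$. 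Thus $\PP(\Omega_M^c)\leq C M^{-p}$ uniformly in $N$, and this can be made arbitrarily small by choosing $M$ large.

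The conclusion then follows by the usual $\varepsilon/2$ argument: given $\varepsilon>0$, first choose $M$ so large that $\PP(\Omega_M^c)<\varepsilon/2$ for all $N$, and then choose $N_0$ so large that for $N\geq N_0$ the first term is $<\varepsilon/2$; hence $\PP(X_N\geq N^{-\eta})<\varepsilon$ for $N\geq N_0$, which is exactly \eqref{speed_proba}. The only mild subtlety — not really an obstacle — is the order of quantifiers: the constant $\mathcal C(M)$ in \eqref{loc_cv} grows with $M$, so $M$ must be fixed \emph{before} sending $N\to\infty$; this is precisely why one obtains convergence in probability rather than a strong rate, and it is handled correctly by the two-step choice of $M$ then $N_0$ above. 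I expect no genuine difficulty here, since all the hard analytic work is already contained in Proposition~\ref{th_loc_cv} and the a priori moment bounds.
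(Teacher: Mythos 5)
Your proof is correct and follows essentially the same route as the paper: split the event over $\Omega_M$ and its complement, bound the first piece by Markov's inequality together with the localized estimate \eqref{loc_cv} (applied with an exponent $\eta'>\eta$), and bound the second by Markov's inequality together with the moment estimates \eqref{mom_u_V1} and \eqref{mom_t_H1}. The only cosmetic difference is that the paper makes the diagonal choice $M=M(N)=\ln(\ln N)$ so that both terms vanish simultaneously as $N\to\infty$, whereas you fix $M$ first and conclude with an $\varepsilon/2$ argument; both are valid.
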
 
 
 \begin{proof} 
 For $N\geq 1$ and $\eta\in (0,1)$, let 
\[ A(N,\eta):=\Big\{ \max_{1\leq J\leq N} \big[ \|e_J\|_{V^0}^2 + \|\ee_J\|_{H^0}^2 \big] + \frac{T}{N} \sum_{j=1}^N \big[ \|A^{\frac{1}{2}} e_j\|_{V^0}^2
+ \| \AA^{\frac{1}{2}} \ee_j \|_{H^0}^2 \big] \geq  N^{-\eta}\Big\}.
\]
Let $\tilde{\eta}\in  (\eta,1)$, $M(N)=\ln(\ln N)$ for $N\geq 3$.  
Then, 
\[ P\big(A(N,\eta)\big) \leq   P\big( A(N,\eta) \cap \Omega_{M(N)} \big) + P \big( (\Omega_{M(N)})^c\big),\]
where $\Omega_{M(N)}= \Omega_{M(N)}(N)$ is defined in Proposition \ref{th_loc_cv}. 
The inequality \eqref{loc_cv}  implies that
\begin{align*}
 P\big( &A(N,\eta) \cap  \Omega_{M(N)} \big) \\ 
 &    \leq N^{\eta}  \;  \EE\Big( 1_{\Omega_{M(N)}}
 \Big[   \max_{1\leq J\leq N} \big[ \|e_J\|_{V^0}^2 + \|\ee_J\|_{H^0}^2 \big] + \frac{T}{N} \sum_{j=1}^N \big[ \|A^{\frac{1}{2}} e_j\|_{V^0}^2
+ \| \AA^{\frac{1}{2}} \ee_j \|_{H^0}^2 \big]  \Big]\Big) \\
&\leq N^{\eta} \; C \big[1+\ln(\ln N) \big] e^{T \tilde{C}  \ln(\ln N)} \Big( \frac{T}{N}\Big)^{\tilde{\eta}  } \\
&\leq C \big[1+\ln(\ln N) \big] \big(  \ln N\big)^{\tilde{C} T} N^{-\tilde{\eta}+\eta} \to 0 \quad {\rm as}\; N\to \infty.
\end{align*} 
The inequalities   \eqref{mom_ut_L2}--\eqref{mom_t_H1}  imply that
 \begin{align*}
P\big(( \Omega_{M(N)})^c\big) \leq \frac{1}{M(N)} \EE \Big( \sup_{t\in [0,T]} \|u(t)\|_{V^1}^2 + \sup_{t\in [0,T]} \|\theta(t)\|_{H^1}^2 \Big) 
\to 0 \quad {\rm as}\; N\to \infty.
\end{align*}
The two above convergence results  complete the proof of \eqref{speed_proba}. 
\end{proof}

\subsection{Rate of Convergence in $L^2(\Omega)$} 
We finally prove the strong rate of convergence, which is also a consequence of   Propositions \ref{prop_u_V1}, \ref{prop_mom_t_H1},
\ref{prop_mom_scheme}  and \ref{th_loc_cv}. 	
\begin{theorem}		\label{th_strong_rate}
Suppose that  the  conditions {\bf (C-u)} and {\bf (C-$\theta$)(i)}  hold.  
Let $u_0\in L^{2^q+\epsilon}(\Omega;V^1)$ and $\theta_0\in L^{2^q+\epsilon}(\Omega;H^1)$ for $q\in [4,\infty)$ and some $\epsilon >0$. 
Then, for  some constant $C$ and for $N$ large enough  
\begin{align}	\label{strong_rate}
\EE\Big( \max_{1\leq J\leq N}& \big[ \|e_J\|_{V^0}^2 + \|\ee_J\|_{H^0}^2 \big] + \frac{T}{N} \sum_{j=1}^N \big[ \|A^{\frac{1}{2}} e_j\|_{V^0}^2
+ \| \AA^{\frac{1}{2}} \ee_j\|_{H^0}^2
\Big) 		
 \leq C  \big( \ln (N) \big) ^{ - (2^{q-1}+1)} .
\end{align}
\end{theorem}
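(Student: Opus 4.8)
The plan is to run the by-now-standard localisation-plus-moments argument. First I would fix the quantity to be estimated,
\[
X_N:=\max_{1\le J\le N}\big[\|e_J\|_{V^0}^2+\|\ee_J\|_{H^0}^2\big]+\frac{T}{N}\sum_{j=1}^N\big[\|A^{\frac12}e_j\|_{V^0}^2+\|\AA^{\frac12}\ee_j\|_{H^0}^2\big],
\]
so that \eqref{strong_rate} reads $\EE(X_N)\le C(\ln N)^{-(2^{q-1}+1)}$, and, for a threshold $M=M(N)>0$ to be chosen, split $\EE(X_N)=\EE(1_{\Omega_M}X_N)+\EE(1_{\Omega_M^c}X_N)$ with $\Omega_M=\Omega_M(N)$ the set of \eqref{def-A(M)}. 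On the good set I would quote Proposition~\ref{th_loc_cv} directly (it applies since $L^{2^q+\epsilon}\hookrightarrow L^{16+\epsilon}$): for every $\eta\in(0,1)$,
\[
\EE\big(1_{\Omega_M}X_N\big)\le C\,(1+M)\,e^{\mathcal C(M)T}\Big(\frac{T}{N}\Big)^{\eta},\qquad \mathcal C(M)=cM,
\]
with $c$ the explicit constant of Proposition~\ref{th_loc_cv}, independent of $N$ and $M$.

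Next I would handle the complement by Hölder's inequality, $\EE(1_{\Omega_M^c}X_N)\le\|X_N\|_{L^r(\Omega)}\,\PP(\Omega_M^c)^{1-1/r}$, taking $r$ as large as the hypotheses allow. This needs two uniform-in-$N$ facts. First, $\sup_N\|X_N\|_{L^r(\Omega)}<\infty$: the maximum part of $X_N$ is bounded, via the triangle inequality, by squares of the $V^0$- and $H^0$-norms of $u(t_j),u^j,\theta(t_j),\theta^j$, for which I would invoke Theorem~\ref{th-gwp} and Proposition~\ref{prop_mom_scheme}; the Riemann-sum part I would compare to the time integrals $\int_0^T\|A^{\frac12}u\|_{V^0}^2\,ds$, $\int_0^T\|\AA^{\frac12}\theta\|_{H^0}^2\,ds$ and their discrete scheme analogues (whose moments again come from Theorem~\ref{th-gwp} and the proof of Proposition~\ref{prop_mom_scheme}), at the cost of the increments $\sum_j\int_{t_{j-1}}^{t_j}\big[\|u(s)-u(t_j)\|_{V^1}^2+\|\theta(s)-\theta(t_j)\|_{H^1}^2\big]\,ds$, which are $o(1)$ by Proposition~\ref{prop_regularity_H1}. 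Second, $\PP(\Omega_M^c)\le\PP\big(\sup_{[0,T]}\|A^{\frac12}u\|_{V^0}^2>M\big)+\PP\big(\sup_{[0,T]}\|\AA^{\frac12}\theta\|_{H^0}^2>M\big)\le CM^{-m}$ by Chebyshev, using Proposition~\ref{prop_u_V1} for the velocity and Proposition~\ref{prop_mom_t_H1} for the temperature; it is precisely because controlling $2p$-th moments of $\|\theta\|_{H^1}$ through Proposition~\ref{prop_mom_t_H1} costs $L^{8p+\epsilon}$ initial data that the strong assumption $u_0,\theta_0\in L^{2^q+\epsilon}$ is needed. Keeping track of which $r$ and $m$ are admissible under this assumption should give $\EE(1_{\Omega_M^c}X_N)\le CM^{-(2^{q-1}+1)}$.

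Then I would optimise in $M$: take $M=M(N)=c_0\ln N$ with $c_0>0$ small enough that $c\,c_0T<\eta$ — possible because $\eta\in(0,1)$ is arbitrary while $c$ and $T$ are fixed. With this choice $e^{\mathcal C(M)T}(T/N)^\eta=C\,N^{cc_0T-\eta}$, so the good-set contribution is at most $C(1+c_0\ln N)\,N^{-\delta}$ with $\delta=\eta-cc_0T>0$, which is $o\big((\ln N)^{-(2^{q-1}+1)}\big)$, while the bad-set contribution is $C(c_0\ln N)^{-(2^{q-1}+1)}$; adding the two bounds gives \eqref{strong_rate} for $N$ large.

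The hardest part will be the bad-set estimate — extracting from $\PP(\Omega_M^c)$ a negative power of $M$ as large as $2^{q-1}+1$, simultaneously with the boundedness of $\|X_N\|_{L^r}$. The difficulty is structural. The localisation set $\Omega_M$ has to involve $\sup_{[0,T]}\|\AA^{\frac12}\theta\|_{H^0}^2$, since the term $\tilde T_{j,1}$ in the proof of Proposition~\ref{th_loc_cv}, which carries the factor $\|\AA^{\frac12}\theta(t_j)\|_{H^0}^2\|\ee_j\|_{H^0}^2$, cannot be absorbed otherwise; and, as recalled at the start of this section, the bilinear term in the It\^o expansion of $\|\AA^{\frac12}\theta\|_{H^0}^2$ rules out exponential moments of that norm, so the exponential-moment route used for the additive noise in Section~\ref{additive_noise} is not available — one is left with polynomial moments, hence a logarithmic (not polynomial) rate and the delicate exponent count. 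A secondary technical point is that the discrete Riemann sums $\frac{T}{N}\sum_j\|A^{\frac12}u^j\|_{V^0}^2$ and $\frac{T}{N}\sum_j\|\AA^{\frac12}\theta^j\|_{H^0}^2$ entering $X_N$ must be shown to have moments of sufficiently high order uniformly in $N$, which is not literally contained in the statement of Proposition~\ref{prop_mom_scheme} but follows from its proof.
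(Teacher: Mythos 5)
Your proposal follows essentially the same route as the paper's proof: split over $\Omega_{M(N)}$ with $M(N)\asymp\ln N$, apply Proposition~\ref{th_loc_cv} on the good set, and control the complement by H\"older's inequality together with a Chebyshev bound on $\PP(\Omega_M^c)$ coming from the polynomial $V^1$/$H^1$ moments of Propositions~\ref{prop_u_V1} and~\ref{prop_mom_t_H1}; the only cosmetic difference is that you take $c_0$ small so the good-set term is polynomially negligible, whereas the paper balances the two contributions exactly (introducing the $\ln\ln N$ correction in $M(N)$), and your unverified claim that the bad-set bookkeeping yields the exponent $2^{q-1}+1$ is exactly the step the paper also carries out only schematically, via $\{\PP(\Omega_M^c)\}^{1/p}$ with $p$ conjugate to $2^q$.
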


\begin{proof}
For any integer $N\geq 1$ and $M\in [1,+\infty)$,  
 let $\Omega_M=\Omega_M(N)$ be defined by \eqref{def-A(M)}.
Let $p$ be the conjugate exponent of $2^q$. H\"older's inequality implies that
\begin{align}	\label{strong-1}
 \EE\Big( & 
 1_{(\Omega_M)^c} \max_{1\leq J\leq N} \big[ \|e_J\|_{V^0}^2 + \|\ee_J\|_{H^0}^2 \big] \Big)  
 \leq  \Big\{ P\big(
  (\Omega_M)^c \big) \Big\}^{\frac{1}{p}}     \nonumber \\
 &\quad \times \Big\{ \EE\Big( \sup_{s\in [0,T]} \|u(s)\|_{V^0}^{2^q} +  \sup_{s\in [0,T]} \|\theta(s)\|_{H^0}^{2^q} 		
 + \max_{1\leq j\leq N} \|u^j\|_{V^0}^{2^q} +  \max_{1\leq j\leq N} \|\theta^j\|_{H^0}^{2^q} \Big) \Big\}^{2^{-q}}		\nonumber \\
 \leq &\;  C \Big\{ P\big( (\Omega_M)^c \big)  \Big\}^{\frac{1}{p}} ,
\end{align}
where the last inequality is a consequence of \eqref{mom_ut_L2} and \eqref{mom_scheme_0}.

Using  \eqref{mom_u_V1}	
and \eqref{mom_t_H1},  we deduce that
 \begin{equation}		\label{strong-2}
P\big( (\Omega_M)^c\big) \leq M^{-2^{q-1}} \EE\big( \sup_{s\in [0,T]} \|u(s)\|_{V^1}^{2^q} +\sup_{s\in [0,T]} \|\theta(s)\|_{H^1}^{2^q} \Big) = C  M^{-2^{q-1}}.
\end{equation}
Using \eqref{loc_cv}, we choose $M(N)\to \infty$ as $N\to \infty$ such that, for $\eta \in (0,1)$ and  $\gamma>0$,
\[ N^{-\eta} \exp\Big[ \frac{9(1+\gamma) \bar{C}_4^4 T }{8}  \Big( \frac{5}{\nu} \vee \frac{1}{\kappa}\Big) M(N) \Big] M(N) \asymp M(N)^{-2^{q-1}}
\]  
which,  taking logarithms, yields
\[ -\eta \ln(N) + \frac{9(1+\gamma) \bar{C}_4^2 T}{8}  \big( \frac{5}{\nu} \vee \frac{1}{\kappa}\big) M(N)  \asymp   -  2^{q-1}   \ln(M(N)).
\]
Set
\begin{align*} M(N)= &\,  \frac{8}{9(1+\gamma) \bar{C}_4^4 \big( \frac{5}{\nu} \vee \frac{1}{\kappa}\big) T} 
\big[ \eta \ln(N) -  \big( 2^{q-1} +1\big)
\ln\big( \ln(N)\big) \big] \\
\asymp &\, \frac{8}{9(1+\gamma) \bar{C}_4^4 \big( \frac{5}{\nu} \vee \frac{1}{\kappa}\big) T} \eta \ln(N). 
\end{align*} 
Then,
\begin{align*}
-\eta \ln(N) + \frac{9(1+\gamma) \bar{C}_4^4 T}{8}  \big( \frac{5}{\nu} \vee \frac{1}{\kappa}\big)  M(N) + \ln (M(N)) &\asymp  - \big( 2^{q-1} +1\big) 
\ln\big( \ln(N)\big)
+ 0(1), \\
- \big( 2^{q-1} +1\big)  \ln\big( M(N)\big)& \asymp - 
\big( 2^{q-1} +1\big) \ln(N) + 0(1).
\end{align*} 
This implies that
\[ \EE\Big( \max_{1\leq J\leq N} \big[ \|e_J\|_{V^0}^2 + \|\ee_J\|_{H^0}^2 \big] \Big) \leq C \big( \ln(N)\big)^{-\big( 2^{q-1} +1)}. 
\]
The inequalities \eqref{mom_u_V1} and \eqref{mom_t_H1} for $p=1$ and \eqref{mom_scheme_1} for $K=1$ imply 
 \[ \sup_{N\geq 1} \EE \Big( \frac{T}{N} \sum_{j=1}^N \big[ \| A^{\frac{1}{2}} u(t_j) \|_{V^0}^2 + \|A^{\frac{1}{2}} u^j\|_{V^0}^2 
 + \| \AA^{\frac{1}{2}} \theta(t_j)\|_{H^0}^2  + \| \AA^{\frac{1}{2}} \theta^j\|_{H^0}^2 \big] \Big) <\infty.
 \]
 Using   a similar argument, we obtain 		
 \[\EE\Big( \frac{T}{N} \sum_{j=1}^N \big[ \|A^{\frac{1}{2}} e_j\|_{V^0}^2 + \|\AA^{\frac{1}{2}} \ee_j\|_{H^0}^2 \big] \Big) \leq 
  \ C \big( \ln(N)\big)^{ -( 2^{q-1} +1)}
  .\] 
  This yields \eqref{strong_rate} and completes the proof. 
\end{proof}

\section{The additive stochastic perturbation - Exponential moments} \label{additive_noise}

We  assume that  $K=V^1$, $\tilde{K} = H^1$, and that $G(u)=Id_{V^1}$, $\tilde{G}(\theta) = Id_{H^1}$ for any $u_0\in V^0$ and $\theta \in H^0$. 
Thus  $W$ is a $Q$-Wiener process in $V^1$, that is solenoidal, 
 (resp. $\tilde{W}$ is a $\tilde{Q}$-Wiener process in $H^1$), where $Q$ 
(resp. $\tilde{Q}$) is a symmetric bounded operator in $V^1$ (resp. $H^1$) with trace-class denoted by ${\rm Tr}\, (Q)$ and ${\rm Tr}\, (\tilde{Q})$ respectively. 
  In particular, we choose 
 \begin{equation}\label{def_W}
 W(t)=\sum_{j\geq 1} \sqrt{q_j}\,  \beta_j(t)\, \zeta_j,\;  \; \tilde{W}(t)= \sum_{j\geq 1} \sqrt{\tilde{q}_j}\,  {\tilde{\beta}}_j(t)\, {\tilde{\zeta}}_j,\;  t\geq 0, 
 \end{equation} 
 where $\{q_j\}_{j\geq 1}$ is a sequence of positive numbers such that $Q \zeta_j = q_j \zeta_j$, $j=1, 2, ...$  
 and $\{\beta_j(t)\}_{j\geq 1}$ are independent one-dimensional Brownian motions on   $(\Omega, {\mathcal F},$ $  ({\mathcal F}_t)_t,  \PP)$, 
 while $\{ \tilde{q}_j\}_{j\geq 1}$ is a sequence of positive numbers such that $\tilde{Q} \tilde{\zeta}_j = \tilde{q}_j \tilde{\zeta}_j$, $j=1, 2, ...$  
 and $\{ \tilde{\beta}_j(t)\}_{j\geq 1}$ are independent one-dimensional Brownian motions (independent of $\{\beta_j(t)\}_{j\geq 1}$)
 on   $(\Omega, {\mathcal F},$ $  ({\mathcal F}_t)_t,  \PP)$. For details concerning this Wiener process  we refer  to \cite{DaPZab}.

Then ${\rm Tr}\, (Q)=\sum_{j\geq 1} q_j$ and  ${\rm Tr}\, (\tilde{Q})=\sum_{j\geq 1} \tilde{q}_j$
We will assume throughout this section the following stronger condition 
 \begin{equation}		\label{K0}
K_0:=  {\rm Tr}\, \big(A^{\frac{1}{2}} Q A^{\frac{1}{2}}\big) = \sum_{j\geq 1} \lambda_j\, q_j <\infty \; \mbox{\rm and} \; 
\tilde{K}_0:=  {\rm Tr}\, \big( (\tilde{A})^{\frac{1}{2}} \tilde{Q}  (\tilde{A})^{\frac{1}{2}}\big) = \sum_{j\geq 1} \tilde{\lambda}_j\, \tilde{q}_j <\infty.
 \end{equation}  
 We consider the evolution equations  similar to \eqref{velocity-mul}-\eqref{temperature-mul} for an additive noise (see e.g. \cite{Temam-88}  and \cite{Fer}):
\begin{align} \label{velocity1}
 \partial_t u - \nu \Delta u + (u\cdot \nabla) u + \nabla \pi & = ({\mathcal T} - {\mathcal T}_L)   v_2 +   dW\quad \mbox{\rm in } \quad (0,T)\times D,\\
 \partial_t {\mathcal T} - \kappa \Delta {\mathcal T} + (u\cdot \nabla {\mathcal T}) &=   d\tilde{W} \quad \mbox{\rm in } \quad (0,T) \times D, 
 \label{temperature1}\\
 \mbox{\rm div }u&=0 \quad \mbox{\rm in } \quad [0,T]\times D,  \nonumber 
 \end{align}
 where  $T>0$. The processes 
$u: \Omega\times (0,T)\times D  \to \RR^2$, and ${\mathcal T} : \Omega\times (0,T)\times D  \to \RR$ 
 have  initial conditions $u_0$ and $ {\mathcal T}_0$ in $D$ respectively, and $\pi : \Omega\times (0,T)\times D  \to \RR$  is  the  pressure. 
On $[0,T]\times \partial D$ these fields satisfy 
  the  boundary conditions  defined in \eqref{periodicity}.
   We then introduce the same classical change of processes \eqref{def_theta} as in the multiplicative case to describe  the temperature and pressure. 
   Once more $\theta$ and $\hat{\pi}$ have the same periodic properties as ${\mathcal T}$ and $\pi$,   $\theta(t,.)=0$ on $\{x_2 =0\}\cup \{x_2=L\}$
 the pair $(u,\theta)$ satisfies the more symmetric  equations 
\begin{align*} %
 \partial_t u - \nu \Delta u + (u\cdot \nabla) u + \nabla \hat{\pi} & = \theta   v_2 +   dW\quad \mbox{\rm in } \quad (0,T)\times D,\\
 \partial_t \theta- \kappa \Delta \theta + (u\cdot \nabla \theta)  +  C_L u_2&=   d\tilde{W} \quad \mbox{\rm in } \quad (0,T) \times D, 
 \\ 
 \mbox{\rm div }u&=0 \quad \mbox{\rm in } \quad [0,T]\times D, 
 \end{align*}
and projecting on divergence free fields
\begin{align}
 \partial_t u +\nu A u + (u\cdot \nabla) u 
 & = \Pi (\theta   v_2 )+   dW\quad \mbox{\rm in } \quad (0,T)\times D,
 \label{velocity-add}\\
 \partial_t \theta- \kappa \Delta \theta + (u\cdot \nabla \theta)  +  C_L u_2&=   d\tilde{W} \quad \mbox{\rm in } \quad (0,T) \times D, 
 \label{temperature-add}
\end{align}
The assumptions on the Brownian motions $W$ and $\tilde{W}$ imply that the conclusions of Theorem~\ref{th-gwp} and  Propositions~\ref{prop_u_V1}--\ref{prop_mom_t_H1}
 obviously apply to the solution $(u,\theta)$ to \eqref{velocity-add}--\eqref{temperature-add}. 

The semi-implicit time Euler scheme $\{ u^k ; k=0, 1, ...,N\}$  and $\{ \theta^k ; k=0, 1, ...,N\}$ in the additive case is defined by $u^0=u_0$, $\theta^0=\theta_0$,
 and for $\varphi \in V^1$, $\psi\in H^1$ and $l=1, ...,N$, 
\begin{align}	\label{Euler_u_add}
\Big( u^l-u^{l-1} &+ h \nu A u^l + h B\big( u^{l-1},u^l\big)   , \varphi\Big) = \big( \Pi(\theta^{l-1} v_2), \varphi) h  + \big( \Delta_l W\, , \, \varphi\big),  \\
\Big( \theta^l-\theta^{l-1} &+ h \kappa \AA \theta^l +h [ u^{l-1}.\nabla]\theta^l + h C_L u^{l-1}_2 , \psi\Big) = \big( \Delta_l\WW\, , \, \psi\big). \label{Euler_t_add}
\end{align}
The random variables $u^k$ and $\theta^k$ are ${\mathcal F}_{t_k}$-measurable for every $k=0, ..., N$ and 
the results of Proposition~\ref{prop_mom_scheme} are satisfied in the additive case for the solutions of \eqref{Euler_u_add} and \eqref{Euler_t_add}.

We conclude this section with exponential moments for the Euler scheme of the temperature  and the velocity, and  use ideas from \cite{BeMi_additive}. 
Note that we have to deal at the same time with the velocity and the temperature,  and that both evolution equations have a linear term involving the other quantity.
This will impose that  the "average" difference of temperatures $C_L$ between the top and bottom parts on the torus is small with respect  to the viscosity $\nu$ and thermal diffusivity $\kappa$.

 Suppose that for some positive constants $\gamma_0, \tilde{\gamma}_0$ we have 
\begin{equation}		\label{exp_mom_ut0L2}
\EE\big[ \exp( {\gamma}_0 \|u_0\|_{V^0}^2 ) \big] <\infty\quad \mbox{\rm and}\quad \EE\big[ \exp( \tilde{\gamma}_0 \|\theta_0\|_{H^0}^2 ) \big] <\infty.
\end{equation} 
Recall that the constants $\lambda_1$ and $\tilde{\lambda}_1$ are defined in the Poincar\'e inequalities \eqref{lambda_1} and \eqref{tilde_lambda_1} respectively. 
\begin{prop}		\label{exp_mom_scheme_ut}
Suppose that  $|C_L|<\frac{\nu \kappa \lambda_1 \tilde{\lambda}_1}{4}$. 

 (i) Let $\theta_0\in H^0$,  $u_0\in V^0$ be deterministic.  
  Set  
  \begin{equation} 	\label{def_beta0}
   \beta_0:= 
  \frac{\nu \kappa \lambda_1 \tilde{\lambda}_1 - 4 |C_L] }{4\big[ \tilde{\lambda}_1 \kappa |C_L| {\rm Tr(Q)} + \lambda_1 \nu {\rm Tr}(\QQ)\big]}.
  \end{equation} 
 Then given any  $\beta\in (0,\beta_0)$ we have for $N$ large enough, 
\begin{equation} 		\label{exp_mom_ut_reg}
 \EE\Big[ \exp  \Big( \beta   \max_{1\leq n\leq N}  \Big[ |C_L| \|u^n\|_{V^0}^2 + \| \theta^n\|_{H^0}^2 +\frac{T}{N} \sum_{l=1}^n \big[  |C_L| \nu \|A^{\frac{1}{2}} u^l \|_{V^0}^2 
+ \kappa \|\AA^{\frac{1}{2}} \theta^l \|_{H^0}^2\big]  \Big)\Big] <\infty.
\end{equation} 

 (ii)  Let $u_0$ and $\theta_0$ be random, ${\mathcal F}_0$-measurable  and satisfy  \eqref{exp_mom_ut0L2}.  Then for $\beta \in (0, \beta_1)$, where 
 \begin{equation} \label{beta_1} 
 \beta_1:= \frac{\nu \kappa\lambda_1 \tilde{\lambda}_1 - 4|C_L|}{4\big[ \tilde{\lambda}_1 \kappa |C_L| {\rm Tr}(Q)  + \lambda_1 \nu {\rm Tr}(\QQ)] 
+ (\nu \kappa \lambda_1 \tilde{\lambda}_1 -4 |C_L|) \frac{\gamma_0 + |C_L| \tilde{\gamma}_0}{\gamma_0 \tilde{\gamma_0}}}, 
 \end{equation} 
the upper bound \eqref{exp_mom_ut_reg} holds for $N$ large enough. 
\end{prop}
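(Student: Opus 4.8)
The plan is to reduce everything to a pathwise discrete energy inequality for the weighted quantity $X_n:=|C_L|\,\|u^n\|_{V^0}^2+\|\theta^n\|_{H^0}^2$ in which, thanks to the smallness of $|C_L|$, a strictly positive amount of dissipation survives the linear coupling, and then to run the exponential (super)martingale argument of \cite{BeMi_additive}. First I would test \eqref{Euler_u_add} with $2u^l$ and \eqref{Euler_t_add} with $2\theta^l$; using the polarization identity $2(f,f-g)=\|f\|^2-\|g\|^2+\|f-g\|^2$ together with the antisymmetry relations \eqref{B} and \eqref{antisym-ut} (so that $\langle B(u^{l-1},u^l),u^l\rangle=0$ and $\langle[u^{l-1}\cdot\nabla]\theta^l,\theta^l\rangle=0$ in the semi-implicit scheme), one obtains for each $l$ identities of the form
\[ \|u^l\|_{V^0}^2-\|u^{l-1}\|_{V^0}^2+\|u^l-u^{l-1}\|_{V^0}^2+2\nu h\|A^{\frac12}u^l\|_{V^0}^2=2h(\theta^{l-1},u_2^l)+2(\Delta_lW,u^l)_{V^0} \]
and the analogue for $\theta$ with the coupling term $-2hC_L(u_2^{l-1},\theta^l)$ and the stochastic term $2(\Delta_l\WW,\theta^l)_{H^0}$. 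In the stochastic terms I would replace $u^l$ by $u^{l-1}+(u^l-u^{l-1})$ (and similarly for $\theta$) and use Young's inequality to absorb the extra part into the increment $\|u^l-u^{l-1}\|_{V^0}^2$ already on the left, leaving an $\mathcal F_{t_{l-1}}$-adapted stochastic integrand plus a term controlled by $\|\Delta_lW\|_{V^0}^2$.

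Next I would multiply the velocity identity by $|C_L|$ and add it to the temperature identity. The two coupling terms $2h|C_L|(\theta^{l-1},u_2^l)$ and $-2hC_L(u_2^{l-1},\theta^l)$ almost cancel; the residual coming from the sign of $C_L$ and from the index shift $l-1\leftrightarrow l$ I would estimate by Cauchy--Schwarz and Young and then reabsorb into the dissipation terms $2\nu h|C_L|\|A^{\frac12}u^l\|_{V^0}^2$ and $2\kappa h\|\AA^{\frac12}\theta^l\|_{H^0}^2$ via the Poincar\'e inequalities \eqref{lambda_1} and \eqref{tilde_lambda_1}. This is the one place where $|C_L|<\tfrac14\nu\kappa\lambda_1\tilde\lambda_1$ is used: it ensures that, after summing over $l=1,\dots,n$, a strictly positive multiple of $h\sum_l[|C_L|\nu\|A^{\frac12}u^l\|_{V^0}^2+\kappa\|\AA^{\frac12}\theta^l\|_{H^0}^2]$ remains on the left and, by Poincar\'e once more, also a positive multiple $c'h\sum_l X_{l-1}$ which dominates any growth produced by the coupling. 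Writing $Y_n:=X_n+\delta\,h\sum_{l\le n}[|C_L|\nu\|A^{\frac12}u^l\|_{V^0}^2+\kappa\|\AA^{\frac12}\theta^l\|_{H^0}^2]$ for a small $\delta>0$, the outcome would be a recursion
\[ Y_n-Y_{n-1}+c'h\,X_{n-1}\le\mu_n+\xi_n,\qquad Y_0=X_0, \]
where $\mu_n:=2|C_L|(\Delta_nW,u^{n-1})_{V^0}+2(\Delta_n\WW,\theta^{n-1})_{H^0}$ is, conditionally on $\mathcal F_{t_{n-1}}$, a centered Gaussian with variance bounded by $c_Q\,h\,X_{n-1}$ for a constant $c_Q$ built from $|C_L|$, ${\rm Tr}(Q)$, ${\rm Tr}(\tilde Q)$ and the Poincar\'e constants (using \eqref{K0}), and $\xi_n:=C|C_L|\|\Delta_nW\|_{V^0}^2+C\|\Delta_n\WW\|_{H^0}^2$ is independent of $\mathcal F_{t_{n-1}}$.

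Then I would fix $\beta>0$, take conditional expectations, and decouple $\mu_n$ from $\xi_n$ by the Cauchy--Schwarz inequality:
\[ \EE\big[e^{\beta Y_n}\,\big|\,\mathcal F_{t_{n-1}}\big]\le e^{\beta Y_{n-1}}\,e^{-\beta c'hX_{n-1}}\,\EE\big[e^{2\beta\mu_n}\,\big|\,\mathcal F_{t_{n-1}}\big]^{\frac12}\,\EE\big[e^{2\beta\xi_n}\big]^{\frac12}. \]
The Gaussian factor equals $\exp\!\big(\beta^2\,{\rm Var}(\mu_n\,|\,\mathcal F_{t_{n-1}})\big)\le\exp(\beta^2c_QhX_{n-1})$, which is $\le e^{\beta c'hX_{n-1}}$ as soon as $\beta\le c'/c_Q$, so the $X_{n-1}$-dependence cancels; the $\xi_n$-factor is a deterministic product of terms of the form $(1-cq_j\beta h)^{-1/2}$ whose product over $l=1,\dots,N$ stays bounded by $e^{C\beta T}$ uniformly in $N$ as long as $h$ is small enough, i.e.\ for $N$ large. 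Hence $e^{\beta Y_n-C\beta nh}$ is a non-negative supermartingale, and a maximal inequality for non-negative supermartingales (after slightly shrinking $\beta$, as in \cite{BeMi_additive}) gives $\EE[\exp(\beta\max_{1\le n\le N}Y_n)]\le C\,e^{\beta X_0}$ for every $\beta<\beta_0$ and $N$ large, once the explicit value of $c'/c_Q$ is identified with $\beta_0$ of \eqref{def_beta0}; for deterministic $u_0,\theta_0$ this is precisely \eqref{exp_mom_ut_reg}, proving (i). For (ii) I would apply (i) conditionally on $\mathcal F_0$ (the constant $C$ does not depend on the data), so $\EE[e^{\beta\max_nY_n}\,|\,\mathcal F_0]\le Ce^{\beta X_0}$ with $X_0=|C_L|\|u_0\|_{V^0}^2+\|\theta_0\|_{H^0}^2$, then take expectations, split $e^{\beta X_0}=e^{\beta|C_L|\|u_0\|_{V^0}^2}e^{\beta\|\theta_0\|_{H^0}^2}$ and apply H\"older with three exponents $p,q,r$, $\tfrac1p+\tfrac1q+\tfrac1r=1$. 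By \eqref{exp_mom_ut0L2} and the previous step the three factors are finite when $\beta p|C_L|\le\gamma_0$, $\beta q\le\tilde\gamma_0$ and $\beta r\le\beta_0$; optimizing yields $\beta^{-1}>\beta_0^{-1}+\tilde\gamma_0^{-1}+|C_L|\gamma_0^{-1}$, i.e.\ $\beta<\beta_1$ with $\beta_1$ as in \eqref{beta_1}.

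I expect the delicate point to be the bookkeeping in the second step: verifying that the Young estimates of the coupling terms leave only strictly positive dissipation, with no net $\|u^{l-1}\|_{V^0}^2$ or $\|\theta^{l-1}\|_{H^0}^2$ growth surviving — this is exactly what forces the threshold $\tfrac14\nu\kappa\lambda_1\tilde\lambda_1$ — together with the matching fact in the martingale step that the conditional Gaussian variance of $\mu_n$ is absorbed precisely by this leftover dissipation, which pins down the admissible range $\beta<\beta_0$.
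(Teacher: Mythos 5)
Your proposal follows the same backbone as the paper's proof: test \eqref{Euler_u_add} with $u^l$ and \eqref{Euler_t_add} with $\theta^l$, use the polarization identity and the antisymmetry of the convection terms, shift the noise pairing to $(\Delta_l W, u^{l-1})$ at the cost of $\|u^l-u^{l-1}\|_{V^0}^2+\|\Delta_l W\|_{V^0}^2$, weight the velocity inequality by $|C_L|$ (the paper's choice $\alpha=|C_L|\beta$), absorb the coupling terms into the dissipation via Young and the Poincar\'e inequalities --- this is exactly where $|C_L|<\tfrac14\nu\kappa\lambda_1\tilde\lambda_1$ enters in both arguments --- and then control the martingale part exponentially. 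Where you genuinely diverge is the last step: you run a discrete supermartingale recursion using the conditional Gaussian law of $\mu_n$ given ${\mathcal F}_{t_{n-1}}$, whereas the paper introduces the continuous-time martingales ${\mathcal M},\tilde{\mathcal M}$, multiplies by the exponential supermartingales $\exp(\mu{\mathcal M}_{t_n}-\tfrac{\mu^2}{2}\langle{\mathcal M}\rangle_{t_n})$ and $\exp(\tilde\mu\tilde{\mathcal M}_{t_n}-\tfrac{\tilde\mu^2}{2}\langle\tilde{\mathcal M}\rangle_{t_n})$, and splits everything by H\"older with exponents $p_1,\mu,\tilde\mu$; the quadratic variations \eqref{quadvar_Mn}--\eqref{quadvar_tildeMn} are then absorbed into the leftover dissipation through \eqref{constraint-Tr}. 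Your route is cleaner in one respect (the independence of $W$ and $\tilde W$ factorizes the conditional Laplace transform, so no H\"older exponent is spent separating the two martingales) and your part~(ii) bookkeeping does reproduce the correct relation $\beta_1^{-1}=\beta_0^{-1}+\tilde\gamma_0^{-1}+|C_L|\gamma_0^{-1}$.

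Two caveats. First, the claim that the two coupling terms ``almost cancel'' after weighting by $|C_L|$ only holds when $C_L>0$; for $C_L<0$ they reinforce each other, and the residual from the index shift $l-1\leftrightarrow l$ is no easier to handle than the terms themselves, so you should (as you in fact then do) simply estimate both by Cauchy--Schwarz and Young as the paper does --- the cancellation buys nothing. Second, and more substantively for a proposition whose entire content is the explicit threshold \eqref{def_beta0}: decoupling $\mu_n$ from $\xi_n$ by a fixed Cauchy--Schwarz gives the Gaussian factor $\EE[e^{2\beta\mu_n}\,|\,{\mathcal F}_{t_{n-1}}]^{1/2}=\exp(\beta^2\,{\rm Var}(\mu_n|{\mathcal F}_{t_{n-1}}))$ rather than $\exp(\tfrac12\beta^2\,{\rm Var})$, which loses a factor $2$ in the admissible range and makes your $c'/c_Q$ come out strictly smaller than $\beta_0$. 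To actually reach \eqref{def_beta0} you must replace that Cauchy--Schwarz by a H\"older inequality with an exponent arbitrarily close to $1$ on the Gaussian factor, dumping the $\|\Delta_l W\|_{V^0}^2$ terms into a factor with a large conjugate exponent; this is harmless because \eqref{mom_delta_W} holds for every exponent once $N$ is large, and it is precisely the role of the paper's free parameter $p_1$. With that correction your argument delivers the stated constants; as written, it proves the qualitative statement but a weaker threshold.
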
 
\begin{proof} 
For $l = 1, ..., N$, we have $u^l\in V^1$ and $\theta^l \in H^1$. In \eqref{Euler_u_add} (resp. \eqref{Euler_t_add}) take the inner product with $\varphi = u^l$  (resp. with $\psi=\theta^l$).
Using the identity $(a-b,a)=\frac{1}{2}\big[ \|a\|^2 - \|b\|^2 + \| a-b\|^2\big]$, this yields
\begin{align}	\label{diff_u^l_add}
\|u^l\|_{V^0}^2- \| u^{l-1}\|_{V^0}^2 + \|u^l-u^{l-1}\|_{V^0}^2 + 2 h\nu \|A^{\frac{1}{2}} u^l\|_{V^0}^2 = 2h\big( \Pi( \theta^{l-1} v_2), u^l\big) + 2\big(\Delta_lW, u^l\big),\\
 \|\theta^l\|_{H^0}^2- \| \theta^{l-1}\|_{H^0}^2 + \| \theta^l-\theta^{l-1}\|_{H^0}^2 + 2 h\kappa \|\AA^{\frac{1}{2}} \theta^l\|_{H^0}^2 +2C_L h \big( u^{l-1}_2, \theta^l\big)
= 2 \big( \Delta_l \tilde{W} , \theta^l).		\label{diff_theta^l_add} 
\end{align}
The Cauchy-Schwarz and Young inequalities imply for $\epsilon >0$ 
\begin{align*}  2h\big|\big( \Pi( \theta^{l-1} v_2) , u^l\big)\big| &= 2h|\big(  \theta^{l-1} v_2, u^l\big)\big| \leq \frac{2}{\sqrt{\lambda_1}} h \|\theta^{l-1}\|_{H^0} \|A^{\frac{1}{2}} u^l\|_{V^0} \\
& \leq \epsilon \nu h \|A^{\frac{1}{2}} u^l\|_{V^0}^2 + \frac{h}{\epsilon \nu \lambda_1} \|\theta^{l-1}\|_{H^0}^2, 
\end{align*}
while
\[ 2  \big( \Delta_l W, u^l \big)  \leq 2 \big( \Delta_l W, u^{l-1} \big)  + \|u^l - u^{l-1} \|_{V^0}^2 + \| \Delta_l W\|_{V^0}^2.\]
The above inequalities and \eqref{diff_u^l_add} imply for $\epsilon \in (0,1)$ and $l=1, ..., N$
\begin{align}		\label{diff-ul_add-2}
\|u^l\|_{V^0}^2 &- \|u^{l-1}\|_{V^0}^2  + h \nu \|A^{\frac{1}{2}} u^l\|_{V^0}^2  \nonumber \\
& \leq
 2\big( \Delta_l  W, u^{l-1}\big) + \|\Delta_lW\|_{V^0}^2 + h \frac{ \|\theta^{l-1}\|_{H^0}^2 }{\epsilon \nu \lambda_1} - h\nu (1-\epsilon) \|A^{\frac{1}{2}} u^l\|_{V^0}^2.
\end{align} 
A similar computation yields for $\tilde{\epsilon}\in (0,1)$ and $l=1, ..., N$
\begin{align}		\label{diff-tl_add-2}
\|\theta^l\|_{H^0}^2 &- \|\theta^{l-1}\|_{H^0}^2  +  h\kappa  \|\AA^{\frac{1}{2}} \theta^l\|_{H^0}^2 \nonumber \\
& \leq 2\big( \Delta_l \WW, \theta^{l-1}\big) + \|\Delta_l \tilde{W}\|_{H^0}^2 + \frac{h |C_L|^2}{\tilde{\epsilon} \kappa \tilde{\lambda}_1}  \|u^{l-1}\|_{V^0}^2 
 -h\kappa (1-\tilde{\epsilon}) \|\AA^{\frac{1}{2}} \theta^l\|_{V^0}^2.
\end{align} 
Multiply \eqref{diff-ul_add-2} by $\alpha>0$ and \eqref{diff-tl_add-2} by $\beta>0$; add the corresponding inequalities for $l=1, ..., n$ for any $n=1, ..., N$;  this yields
\begin{align*}
\alpha &\Big( \|u^n\|_{V^0}^2 + h \nu \sum_{l=1}^n \|A^{\frac{1}{2}} u^l\|_{V^0}^2 \Big) +  \beta \Big(  \|\theta^n\|_{H^0}^2  + h \kappa \sum_{l=1}^n \|\AA^{\frac{1}{2}} \theta^l\|_{H^0}^2 \Big) 
 \leq   \alpha \|u_0\|_{V^0}^2 + \beta \|\theta\|_{H^0}^2\\
 &  + 2\alpha \sum_{l=1}^n \big( u^{l-1}, \Delta_l W\big) 
+ 2\beta \sum_{l=1}^n \big( \theta^{l-1} , \Delta_l \WW\big) 
+  \sum_{l=1}^n \big[ \alpha \|\Delta_l W\|_{V^0}^2 + \beta \|\Delta_l \WW\|_{H^0}^2 \big]  \\
 & + \frac{\alpha h}{\epsilon \nu \lambda_1} \sum_{l=1}^n \|\theta^{l-1}\|_{H^0}^2  + \frac{\beta |C_L|^2  h}{\tilde{\epsilon} \kappa \tilde{\lambda}_1} \sum_{l=1}^n \|u^{l-1}\|_{V^0}^2 \\
 & - \alpha \nu (1-\epsilon) h \sum_{l=1}^n \|A^{\frac{1}{2}} u^l\|_{V^0}^2 - \beta \kappa (1-\tilde{\epsilon}) h \sum_{l=1}^n \|\AA^{\frac{1}{2}} \theta^l\|_{H^0}^2.
\end{align*}
The Poincar\'e inequality implies
\begin{align}		\label{sum_errors}
\alpha &\Big( \|u^n\|_{V^0}^2 + h \nu \sum_{l=1}^n \|A^{\frac{1}{2}} u^l\|_{V^0}^2 \Big) +  \beta \Big(  \|\theta^n\|_{H^0}^2  + h \kappa \sum_{l=1}^n \|\AA^{\frac{1}{2}} \theta^l\|_{H^0}^2 \Big) \nonumber \\
   \leq  &  \Big[ \alpha + \frac{\beta |C_L|^2 h}{\tilde{\epsilon}\kappa \tilde{\lambda}_1} \Big] \|u_0\|_{V^0}^2 
+\Big[ \ \beta + \frac{\alpha h}{\epsilon \nu \lambda_1} \Big] \|\theta_0\|_{H^0}^2 + 2\alpha\! \sum_{l=1}^n\!  \big( u^{l-1}, \Delta_l W\big) 
+ 2\beta \! \sum_{l=1}^n\!  \big( \theta^{l-1} , \Delta_l \WW\big)  \nonumber \\
&
+  \sum_{l=1}^{n} \big[ 2\alpha \|\Delta_l W\|_{V^0}^2 \nonumber \\
&+ 2\beta \|\Delta_l \WW\|_{H^0}^2 \big]  -h\alpha \nu (1-\epsilon) \|A^{\frac{1}{2}} u^n\|_{V^0}^2 
+ h\sum_{j=1}^{n-1} \|A^{\frac{1}{2}} u^l\|_{V^0}^2 \Big[ - \alpha \nu (1-\epsilon) + \frac{\beta |C_L|^2}{\tilde{\epsilon} \kappa \lambda_1 \tilde{\lambda}_1} \Big] \nonumber \\
&-h\beta \kappa (1-\tilde{\epsilon}) \|\AA^{\frac{1}{2}} \theta^n\|_{V^0}^2 
+ h\sum_{j=1}^{n-1} \! \|\AA^{\frac{1}{2}} \theta^l\|_{V^0}^2 \Big[ - \beta \kappa (1-\tilde{\epsilon}) + \frac{\alpha}{\epsilon \nu \lambda_1 \tilde{\lambda}_1} \Big].
 \end{align}
Let $M_n=2\alpha \sum_{l=1}^n \big( u^{l-1}, \Delta_l W\big)$ and  $\tilde{M}_n=2\beta \sum_{l=1}^n \big( \theta^{l-1}, \Delta_l \WW\big)$; the processes $(M_n, {\mathcal F}_{t_n}, n=1, ..., N)$
and $(\tilde{M}_n, {\mathcal F}_{t_n}, n=1, ..., N)$ are discrete martingales. Since $W$ and $\WW$ are independent, $\EE\big( M_n \tilde{M}_n\big)=0$  for every $n=1, ..., N$.
 
 For $s\in [t_l, t_{l+1})$, $l=1, \cdots, N-1$, 
 set $\underline{s}=t_l$, $u^{\underline{s}}=
u^l$. and  $\theta^{\underline{s}}=
\theta^l$.  With these notations, $M_n = {\mathcal M}_{t_n}$,  $\tilde{M}_n=\tilde{\mathcal M}_{t_n}$, where 
\[ {\mathcal M}_t = 2\alpha  \int_0^t \big(    u^{\underline{s}} \ , \,  d  W(s)\big),  \quad \mbox{\rm and} \quad 
 \tilde{\mathcal M}_t=  2\beta \int_0^t \big(    \theta^{\underline{s}} \ , \,  d  \WW(s)\big), 
\quad t\in [0,T]. 
\]
The processes $({\mathcal M}_t, {\mathcal F}_t, t\in [0,T])$ and  $(\tilde{\mathcal M}_t, {\mathcal F}_t, t\in [0,T])$ are square integrable martingales such that
\begin{align}	\label{quadvar_Mn}
\langle {\mathcal M}\rangle_{t_n}& \leq 4\alpha^2 \int_0^{t_n} {\rm Tr}(Q) \|u^{\underline{s}}\|_{V^0}^2 ds = 4\alpha^2  {\rm Tr}(Q)  h \sum_{l=0}^{n-1} \|u^l\|_{V^0}^2,  \\
\langle\tilde {\mathcal M}\rangle_{t_n}& \leq 4\beta^2 \int_0^{t_n} {\rm Tr}(\QQ) \|\theta^{\underline{s}}\|_{H^0}^2 ds = 4\beta^2  {\rm Tr}(\QQ)  h \sum_{l=0}^{n-1} \|\theta^l\|_{H^0}^2 .
\label{quadvar_tildeMn}
\end{align}
Furthermore, if $Y$ (resp. $\tilde{Y}$) is a $V^0$ (resp. $H^0$)-valued centered Gaussian random variable with the same distribution as $W(1)$ (resp. $\WW(1)$), then
the covariance operator of $Y$ is $Q$ (resp. of $\tilde{Y}$ is $\QQ$). Using the scaling property and the independence of the increments $\Delta_l$, $\Delta_l\WW$, $l=1, ..., N$,
 we deduce that for any $\alpha, \beta >0$ we have
\[ \EE\Big[ \exp\Big( \alpha \sum_{l=1}^N \|\Delta_l W\|_{V^0}^2 + \beta \|\Delta_l W\|_{H^0}^2 \Big) \Big] = \Big( \EE\Big[ e^{\frac{\alpha T}{N} \|Y\|_{V^0}^2}\Big] \Big)^N
\Big( \EE\Big[ e^{\frac{\beta T}{N} \|\tilde{Y}\|_{H^0}^2}\Big] \Big)^N
\]
Proposition~\cite[Proposition~2.16]{DaPZab} implies that for $\gamma \in \big( 0, \frac{1}{4 {\rm Tr}(Q) }\big)$ and  $\bar{\gamma} \in \big[ \gamma, \frac{1}{4 {\rm Tr}(Q) }\big)$, we have 
\begin{align*} 
 \EE\big( e^{\gamma \|  {Y}\|_{V^0}^2}\big)& \leq \exp\Big( \frac{1}{2} \sum_{i=1}^\infty \frac{( 2\bar{\gamma})^i}{i} ({\rm Tr}(Q))^i \Big)  
 \leq 
\exp\Big[ - \frac{1}{2} \ln\big(1- 2 \bar{\gamma} {\rm Tr}({Q}) \big) \Big]<\infty. 
\end{align*} 
Since $-\ln(1-x) \leq 2x$ for $x\in (0,\frac{1}{2})$. we deduce 
\[ \EE\Big( e^{\gamma \|Y\|_{V^0}^2}\Big) \leq e^{\bar{\gamma} {\rm Tr}(Q)} \quad \mbox{\rm for}\quad 0<\gamma <\bar{\gamma} < \frac{1}{4{\rm Tr}(Q)}.\]
 A similar computation yields 
 \[ \EE\Big( e^{\delta \|\tilde{Y}\|_{H^0}^2}\Big) \leq e^{\bar{\delta} {\rm Tr}(\QQ)} \quad \mbox{\rm for}\quad 0<\delta <\bar{\delta} < \frac{1}{4{\rm Tr}(\QQ)}.\]
Hence  given any $\alpha, \beta >0$ and $p\in (1,\infty)$ ,  $\big(\EE\big[ \big(\frac{p\alpha T}{N} \|Y\|_{V^0}^2 \big) \big] \EE\big[ \big(\frac{p\beta T}{N} \|\tilde{Y}\|_{H^0}^2 \big)\big] \big)^N<\infty$ 
for $N$ large enough, 
which implies 
\begin{equation}  \label{mom_delta_W}
 \EE\Big[ \exp\Big(p  \sum_{l=1}^N  \big[ \alpha \|\Delta_l W\|_{V^0}^2 + \beta \|\Delta_l \WW\|_{H^0}^2 \big] \Big) \Big] <\infty.
\end{equation} 
Using \eqref{sum_errors} we deduce that for $\alpha, \beta >0$, $\epsilon, \tilde{\epsilon}\in (0,1)$ and $\mu, \tilde{\mu} \in (1,\infty)$,  
\begin{align}		\label{max_form1}
\exp\Big(& \max_{1\leq n\leq N}  \Big[ \alpha \|u^n\|_{V^0}^2 + \beta \| \theta^n\|_{H^0}^2 + \alpha \nu h \sum_{l=1}^n \|A^{\frac{1}{2}} u^l \|_{V^0}^2 
+ \beta \kappa h \sum_{l=1}^n \|\AA^{\frac{1}{2}} \theta^l \|_{H^0}^2 \Big] \Big) \nonumber \\
\leq &  \exp\Big(  \Big[  \alpha + \frac{\beta |C_L|^2 h}{\tilde{\epsilon}\kappa \tilde{\lambda}_1} \Big] \|u_0\|_{V^0}^2 \Big) \;  \exp\Big( 
\Big[  \beta + \frac{\alpha h}{\epsilon \nu \lambda_1} \Big] \|\theta_0\|_{H^0}^2  \Big) \nonumber  \\
&\times  \exp\Big( \sum_{l=1}^N  \big[ \alpha \|\Delta_l W\|_{V^0}^2 + \beta \|\Delta_l \WW\|_{H^0}^2 \big] \Big) \Big]
\Big\{ \max_{1\leq n\leq N} \exp\Big( {\mathcal M}_{t_n} - \frac{\mu}{2} \langle {\mathcal M}\rangle_{t_n}\Big) \Big\}\nonumber \\
&\times \Big\{ \max_{1\leq n\leq N} \exp\Big( \tilde{\mathcal M}_{t_n} - \frac{\tilde{\mu}}{2} \langle \tilde{\mathcal M}\rangle_{t_n}\Big) \Big\}\; 
\prod_{i=1}^4 \tau_i, 
\end{align} 
where 
\begin{align*} \tau_1=& \exp\Big( h \sum_{l=1}^{N-1}   \| A^{\frac{1}{2}} u^l\|_{V^0}^2 
\Big[ -\alpha \nu (1-\epsilon) + \frac{\beta C_L^2}{\tilde{\epsilon} \kappa \lambda_1 \tilde{\lambda}_1} \Big] \Big), \\
\tau_2=& \exp\Big( h \sum_{l=1}^{N-1}   \| \AA^{\frac{1}{2}} \theta^l\|_{H^0}^2 
\Big[ -\beta \kappa (1-\tilde{\epsilon}) + \frac{\alpha}{ \epsilon \nu \lambda_1 \tilde{\lambda}_1} \Big] \Big), \\
\tau_3= & \exp\Big( \frac{\mu}{2} 4\alpha^2 {\rm Tr}(Q) h \sum_{l=1}^{N-1} \|u^l\|_{V^0}^2 \Big), 
\quad \tau_4=  \exp\Big( \frac{\tilde{\mu}}{2} 4\beta^2 {\rm Tr}(\QQ) h \sum_{l=1}^{N-1} \|\theta^l\|_{H^0}^2 \Big),
\end{align*} 
where the last two exponential terms come from \eqref{quadvar_Mn}--\eqref{quadvar_tildeMn}
Using once more the Poinca\'e inequality, we deduce that the product $\Pi_{i=1}^4\tau_i $ can be rewritten as follows
\begin{align*}
\prod_{i=1}^4 \tau_i=& \exp\Big( \alpha h \sum_{l=1}^{N-1} \|A^{\frac{1}{2}} u^l\|_{V^0}^2 \Big[ -\nu (1-\epsilon) + \frac{\beta C_L^2}{\alpha \tilde{\epsilon} \kappa \lambda_1 \tilde{\lambda}_1}
+ \frac{2\alpha {\rm Tr}(Q) \mu}{\lambda_1}\Big] \Big)\\
&\times \exp\Big( \beta h \sum_{l=1}^{N-1} \|\AA^{\frac{1}{2}} \theta^l\|_{H^0}^2 \Big[ -\kappa (1-\tilde{\epsilon}) + \frac{\alpha}{\beta \epsilon \nu \lambda_1 \tilde{\lambda}_1}
+ \frac{2\beta {\rm Tr}(\QQ) \tilde{\mu}}{\tilde{\lambda}_1}\Big] \Big).
\end{align*}
Fix  $\delta, \tilde{\delta}  \in (0,1)$; we choose $\alpha$ and $\beta$ positive such that on one hand for $\epsilon, \tilde{\epsilon}\in (0,1)$, 
\begin{equation} 	\label{constraint} 
 \frac{\beta C_L^2}{\alpha \tilde{\epsilon} \kappa \lambda_1 \tilde{\lambda}_1}  \leq \delta \nu (1-\epsilon), \quad {\rm and} 
\quad \frac{\alpha}{\beta \epsilon \nu \lambda_1 \tilde{\lambda}_1}\leq \tilde{\delta} \kappa (1-\tilde{\epsilon}),
\end{equation} and on the other hand 
\begin{equation}	\label{constraint-Tr}
 \frac{2\alpha {\rm Tr}(Q) \mu}{\lambda_1} \leq (1-\delta) \nu (1-\epsilon), \quad {\rm and} \quad 
 \frac{2\beta {\rm Tr}(\QQ) \tilde{\mu}}{\tilde{\lambda}_1} \leq(1- \tilde{\delta}) \kappa (1-\tilde{\epsilon}).
\end{equation} 
We first satisfy \eqref{constraint}, and then find assumptions on ${\rm Tr}(Q) $ and $ {\rm Tr}(\QQ)$ to satisfy \eqref{constraint-Tr}.
Set $\alpha = \beta  \epsilon (1-\tilde{\epsilon}) \nu \kappa \lambda_1 \tilde{\lambda}_1 \tilde{\delta}$;  we need to impose 
\[ \frac{\beta C_L^2}{[\beta \epsilon (1-\tilde{\epsilon}) \nu \kappa \lambda_1 \tilde{\lambda}_1 \tilde{\delta} ] \tilde{\epsilon} \kappa \lambda_1 \tilde{\lambda}_1} \leq \delta \nu (1-\epsilon), \]
that is $C_L^2 \leq  \epsilon (1-\epsilon) \tilde{\epsilon} (1-\tilde{\epsilon}) \big( \nu \kappa \lambda_1 \tilde{\lambda}_1 \big)^2 \delta \tilde{\delta}$. 
The function $\epsilon\in (0,1) \to \epsilon (1-\epsilon)$
achieves its maximum for $\epsilon = \frac{1}{2}$ and the maximum is $\frac{1}{4}$. Let $\epsilon = \tilde{\epsilon}=\frac{1}{2}$; 
requiring that 
\[ |C_L| <  \frac{1}{4} \nu \kappa \lambda_1\tilde{\lambda}_1,  \] 
 choosing  $\delta =  \tilde{\delta}  : =  \frac{4 |C_L|}{\nu \kappa \lambda_1 \tilde{\lambda}_1} \in (0,1)$, 
 and letting 
$\alpha =  \frac{1}{4} \beta  \nu \kappa \lambda_1 \tilde{\lambda}_1 \tilde{\delta} = |C_L| \beta$,   we can fulfill \eqref{constraint}. 
Since $\tilde{\lambda} = \lambda$, the inequalities in \eqref{constraint-Tr} can be reformulated as follows
\begin{equation} 	\label{constraint_Tr_Bis}
 \frac{1}{\mu} + \frac{1}{\tilde{\mu}} <1, \quad \frac{1}{\mu} \geq  \frac{ 2 \beta |C_L| {\rm Tr}(Q)}{\nu \lambda_1 (1-\delta) \frac{1}{2}} \quad {\rm and} \quad 
\frac{1}{\tilde{\mu}} \geq  \frac{ 2 \beta {\rm Tr}(\QQ)}{\kappa \tilde{\lambda}_1 (1-\delta) \frac{1}{2}} .
\end{equation} 
Therefore, if 
 \[ \beta_0 := \frac{1}{4} \; 
 \frac{\nu \kappa \lambda_1 \tilde{\lambda}_1 -4|C_L]}{\tilde{\lambda}_1 \kappa |C_L| {\rm Tr(Q)} + \lambda_1 \nu {\rm Tr}(\QQ)},
 \]
 and $\beta < \beta_0$, we can find exponents $\mu$ and $\tilde{\mu}$ such that  \eqref{constraint_Tr_Bis} is satisfied.
  For this choice of parameters, the upper bounds  \eqref{constraint} and \eqref{constraint-Tr} are satisfied; 
 hence we have for some non negative constants $c$ and $ \tilde{c}$
\[ \prod_{i=1}^4 \tau_i = \exp\Big( -c \sum_{l=1}^{N-1} \|A^{\frac{1}{2}} u^l\|_{V^0}^2 - \tilde{c} \sum_{l=1}^{N-1} \|\AA^{\frac{1}{2}} \theta^l\|_{H^0}^2 \Big) \leq 1.\] 
 
 (i) Let $u_0$ and $\theta_0$ be deterministic;  suppose that   $|C_L|< \frac{1}{4} \nu \kappa \lambda_1\tilde{\lambda}_1$, $ \alpha = |C_L| \beta$
 and $\beta < \beta_0$,  and let $\mu$ and $\tilde{\mu}$ satisfy \eqref{constraint_Tr_Bis}. Let $p_1$ be defined by $\frac{1}{p_1} = 1-\frac{1}{\mu}- \frac{1}{\tilde{\mu}}$. 
 Take expected value in \eqref{max_form1},
apply H\"older's inequality with exponents $p_1$, $\mu$ and $\tilde{\mu}$. This yields for $c_1=|C_L| \beta \big[ 1+ \frac{2 |C_L| h}{\kappa \tilde{\lambda}_1}\big]$ and $
c_2=\beta  \big[ 1+  \frac{2 |C_L|  h}{\nu \lambda_1}\big] $ 
\begin{align*}
\EE\Big(  \Big[ & \max_{1\leq n\leq N}  \Big[ |C_L| \beta \|u^n\|_{V^0}^2 + \beta \| \theta^n\|_{H^0}^2 + |C_L| \beta\nu h \sum_{l=1}^n \|A^{\frac{1}{2}} u^l \|_{V^0}^2 
+ \beta \kappa h \sum_{l=1}^n \|\AA^{\frac{1}{2}} \theta^l \|_{H^0}^2 \Big] \Big) \nonumber \\
\leq & \exp\big( c_1 \|u_0\|_{V^0}^2 + c_2 \|\theta_0\|_{H^0}^2 \big)  
\Big\{ \EE \Big[ \exp\Big( \beta p_1  \sum_{l=1}^N \big[  |C_L| \| \Delta_l W\|_{V^0}^2 + \|\Delta_l \WW\|_{H^0}^2\big] \Big)\Big] \Big\}^{p_1}  \nonumber \\
&\times \Big\{ \EE\Big[ \max_{1\leq n\leq N} \exp\Big( \mu {\mathcal M}_{t_n} - \frac{\mu^2}{2} \langle {\mathcal M} \rangle_{t_n} \Big) \Big] \Big\}^{\frac{1}{\mu}}  
 \Big\{ \EE\Big[ \max_{1\leq n\leq N} \exp\Big( \tilde{\mu} {\mathcal M}_{t_n} - \frac{\tilde{\mu}^2}{2} \langle  \tilde{{\mathcal M}} \rangle_{t_n} \Big) \Big] \Big\}^{\frac{1}{\tilde{\mu}}} 
\end{align*} 
Since the last two factors are upper bounded by 1, \eqref{mom_delta_W} implies that the above upper bound is finite.
This concludes the proof of \eqref{exp_mom_ut_reg}.

(ii) Suppose that $u_0$ and $\theta_0$ are random, independent of $W$ and $\WW$, and that \eqref{exp_mom_ut0L2} holds. 
Let $p_1,p_2,p_3, \mu$ and $\tilde{\mu}$ belong to the interval $  (1,\infty)$
be such that $\frac{1}{p_1} + \frac{1}{p_2} + \frac{1}{p_3} + \frac{1}{\mu}+\frac{1}{\tilde{\mu}}  = 1$.  and suppose that 
$ |C_L|  < \frac{\nu \kappa \lambda_1 \tilde{\lambda}_1}{4}$  and $\alpha = |C_L| \beta$. with  
$\beta \leq  \beta_0$; then $\prod_{i=1}^4 \tau_i \leq 1$. 
Clearly, given any $\bar{\epsilon} >0 $, if $N$ is large enough we have $4 |C_L| h \big[ \frac{1}{\kappa \tilde{\lambda}_1} \vee \frac{1}{\nu \lambda_1}\big] \leq \bar{\epsilon}$.

Apply H\"older's inequality to the right hand side of \eqref{max_form1} with exponents $p_1, p_2, p_3, \mu$ and $\tilde{\mu}$;  collecting all requirements,  we have to impose 
for   $\delta = \frac{4|C_L|}{\nu \kappa \lambda_1 \tilde{\lambda}_1}$ 
 \begin{align*} 
  \frac{1}{\mu} + &\, \frac{1}{\tilde{\mu}} + \frac{1}{p_1}+\frac{1}{p_2} < 1, \quad 
 \frac{1}{\mu} \geq  \frac{ 4 \beta |C_L| {\rm Tr}(Q)}{\nu \lambda_1 (1-\delta) }, \quad 
\frac{1}{\tilde{\mu}} \geq  \frac{ 4 \beta {\rm Tr}(\QQ)}{\kappa \tilde{\lambda}_1 (1-\delta) } ,  \\
&  \; 
 p_1  |C_L| \beta  \Big[ 1+ \frac{2  |C_L| h}{\kappa \tilde{\lambda}_1}\Big] < \gamma_0, \quad {\rm and} \quad p_2 \beta  \Big[ 1+  \frac{2 |C_L|  h}{\nu \lambda_1}\Big] < \tilde{\gamma}_0, 
\end{align*} 
 that is  for $N$ large enough
\[ \beta \Big[ \frac{4 |C_L| {\rm Tr}(Q)}{\lambda_1 \nu (1-\lambda)} + \frac{4 {\rm Tr}(\QQ)}{\tilde{\lambda}_1 \kappa (1-\lambda)}+ \frac{|C_L|}{\gamma_0} + \frac{1}{\tilde{\gamma}_0}\Big] <1,
\] 
Therefore, if  $\beta < \beta_1$ where
\[ \beta_1:= \frac{\nu \kappa \lambda_1 \tilde{\lambda}_1 -4|C_L|}{4  \big[ \kappa \tilde{\lambda}_1 |C_L| {\rm Tr}(Q)  + \nu \lambda_1 {\rm Tr}(\QQ)\big]
+ \frac{|C_L| \tilde{\gamma}_0 + \gamma_0}{\gamma_0 \tilde{\gamma}_0} (\nu \kappa \lambda_1 \tilde{\lambda}_1 -4 |C_L|)}, \]
 the upper bound \eqref{exp_mom_ut_reg} is satisfied for $N$ large enough.  This completes the proof of \eqref{exp_mom_ut_reg}.

Note that if $u_0$ and $\theta_0$ are deterministic, they satisfy the condition \eqref{exp_mom_ut0L2} with any $\gamma_0$ and $\tilde{\gamma}_0$. As $\gamma_0$ and $\tilde{\gamma}_0$
 increase  to $+\infty$, we can check  that the limit of $\beta_1$ is equal to $\beta_0$, so that the results of parts (i) and (ii) are consistent.  
\end{proof}

Note that if $C_L=0$, the previous result does not give any information about exponential moments of the velocity. This is to be expected since in that case, equation \eqref{diff_theta^l_add}
does not involve the velocity. This means that we can deal with the temperature and then with the velocity. Proposition \ref{exp_mom_scheme_ut}  can be reformulated in a simpler way as follows.
\begin{coro}	\label{cor_exp_mom_t}
Suppose that $C_L=0$.

 (i) Let $\theta_0\in H^0$,  $u_0\in V^0$ be deterministic and let $\tilde{\lambda}_1$ be defined by \eqref{tilde_lambda_1}. 
 Set  $\tilde{\beta}_0:= \frac{\kappa \tilde{\lambda}_1}{4{\rm Tr}( \tilde{Q}) }$. Then given any  $\beta \in (0,\tilde{\beta}_0)$  
  there exists a constant $C(\beta)$ such that for $N$ large enough, 
\begin{equation} 	\label{exp-moments-At_al_0}
\EE\Big[ \exp\Big( \beta   \max_{0\leq n \leq N}  \Big\{  \| \theta^n\|_{H^0}^2  +   \frac{T}{N} \kappa \sum_{l=1}^n ||\tilde{A}^{\frac{1}{2}} \theta^l\|_{H^0}^2 \Big\} \Big)  \Big] 
= C(\beta) < \infty. 
\end{equation} 

 (ii)  Let $\theta_0$ be random, ${\mathcal F}_0$-measurable  and satisfy $ \EE\big[ \exp( \tilde{\gamma}_0 \|\theta_0\|_{H^0}^2 ) \big] <\infty$. 
 Set 
 \begin{equation}		\label{def_beta12}
  \tilde{\beta}_1:= \frac{\kappa \tilde{\lambda}_1 \tilde{\gamma}_0 }{4 {\rm Tr}(\QQ) \tilde{\gamma}_0 + \kappa \tilde{\lambda}_1}.
 \end{equation}
   Then   for  
   $\beta < \tilde{\beta}_1$,
 the equation 
 \eqref{exp-moments-At_al_0}) holds for $N$ large enough. 
\end{coro}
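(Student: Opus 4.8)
The plan is to read this corollary as the degenerate case $C_L=0$ of Proposition~\ref{exp_mom_scheme_ut} and to run the correspondingly simplified argument. When $C_L=0$ the hypothesis $|C_L|<\tfrac{\nu\kappa\lambda_1\tilde\lambda_1}{4}$ is automatic, the conclusion \eqref{exp_mom_ut_reg} collapses to \eqref{exp-moments-At_al_0} because every velocity term there is weighted by $|C_L|=0$, and substituting $C_L=0$ into \eqref{def_beta0} and \eqref{beta_1} returns exactly $\tilde\beta_0=\tfrac{\kappa\tilde\lambda_1}{4{\rm Tr}(\QQ)}$ and the $\tilde\beta_1$ of \eqref{def_beta12}; so one could simply quote Proposition~\ref{exp_mom_scheme_ut}. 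I would instead give the short self-contained proof, in which the coupling term and the cross-process H\"older splitting disappear. First I would take the inner product of \eqref{Euler_t_add} with $\theta^l$: the transport term vanishes by \eqref{antisym-ut}, the B\'enard term is absent, and one recovers \eqref{diff_theta^l_add} with $C_L=0$. Bounding the increment by $2(\Delta_l\WW,\theta^l)\le 2(\Delta_l\WW,\theta^{l-1})+\|\theta^l-\theta^{l-1}\|_{H^0}^2+\|\Delta_l\WW\|_{H^0}^2$ cancels the difference term and leaves the full $2h\kappa\|\AA^{1/2}\theta^l\|_{H^0}^2$, of which I keep $h\kappa\|\AA^{1/2}\theta^l\|_{H^0}^2$ on the left and carry the rest to the right as spare dissipation (keeping, as in Proposition~\ref{exp_mom_scheme_ut} with $\tilde\epsilon=\tfrac12$, only $(1-\tilde\epsilon)h\kappa$ as spare reproduces the stated $\tilde\beta_0$). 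Summing over $l=1,\dots,n$ and multiplying by $\beta>0$ yields, for every $n\le N$,
\[
\beta\Big(\|\theta^n\|_{H^0}^2+\kappa h\sum_{l=1}^n\|\AA^{\frac12}\theta^l\|_{H^0}^2\Big)\le \beta\|\theta_0\|_{H^0}^2+\beta\sum_{l=1}^N\|\Delta_l\WW\|_{H^0}^2+\Big(\tilde{\mathcal M}_{t_n}-\beta\kappa h\sum_{l=1}^n\|\AA^{\frac12}\theta^l\|_{H^0}^2\Big),
\]
with $\tilde{\mathcal M}_t=2\beta\int_0^t(\theta^{\underline{s}},d\WW(s))$ a square integrable martingale satisfying $\langle\tilde{\mathcal M}\rangle_{t_n}\le 4\beta^2{\rm Tr}(\QQ)h\sum_{l=0}^{n-1}\|\theta^l\|_{H^0}^2$ as in \eqref{quadvar_tildeMn}.

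Next I would take $\max_{1\le n\le N}$, exponentiate, and split the martingale factor as $\exp(\tilde{\mathcal M}_{t_n}-\tfrac{\tilde\mu}2\langle\tilde{\mathcal M}\rangle_{t_n})\,\exp(\tfrac{\tilde\mu}2\langle\tilde{\mathcal M}\rangle_{t_n})$ for some $\tilde\mu>1$. By the Poincar\'e inequality \eqref{tilde_lambda_1}, $\tfrac{\tilde\mu}2\langle\tilde{\mathcal M}\rangle_{t_n}\le 2\tilde\mu\beta^2{\rm Tr}(\QQ)\|\theta_0\|_{H^0}^2+\tfrac{2\tilde\mu\beta^2{\rm Tr}(\QQ)}{\tilde\lambda_1}h\sum_{l=1}^n\|\AA^{1/2}\theta^l\|_{H^0}^2$, and its dissipative part is absorbed by the spare $-\beta\kappa h\sum_{l=1}^n\|\AA^{1/2}\theta^l\|_{H^0}^2$ whenever $\tfrac1{\tilde\mu}\ge\tfrac{c\beta{\rm Tr}(\QQ)}{\kappa\tilde\lambda_1}$; combined with $\tilde\mu>1$ this pins down the admissible range of $\beta$. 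The factor $\exp(\tilde{\mathcal M}_{t_n}-\tfrac{\tilde\mu}2\langle\tilde{\mathcal M}\rangle_{t_n})$ is the $1/\tilde\mu$-th power of a nonnegative supermartingale started at $1$, so Doob's maximal inequality makes $\EE$ of its running maximum finite, while $\EE\big[\exp(p\beta\sum_{l=1}^N\|\Delta_l\WW\|_{H^0}^2)\big]<\infty$ for $N$ large by the Gaussian computation leading to \eqref{mom_delta_W}. In case (i), $\theta_0$ is deterministic, so $e^{\beta\|\theta_0\|_{H^0}^2}$ and the extra $e^{C\|\theta_0\|_{H^0}^2}$ above are constants, and H\"older with exponents $\tilde\mu$ and its conjugate gives \eqref{exp-moments-At_al_0} with finite $C(\beta)$ for every $\beta<\tilde\beta_0$. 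In case (ii), $\theta_0$ is random, and I would insert one more H\"older factor $\big(\EE[e^{p_1\beta(1+\bar\epsilon)\|\theta_0\|_{H^0}^2}]\big)^{1/p_1}$, finite for $p_1\beta(1+\bar\epsilon)<\tilde\gamma_0$, where $\bar\epsilon\to0$ as $N\to\infty$; reconciling $\tfrac1{p_1}+\tfrac1{\tilde\mu}<1$, $\tfrac1{\tilde\mu}\ge\tfrac{c\beta{\rm Tr}(\QQ)}{\kappa\tilde\lambda_1}$ and $p_1\beta<\tilde\gamma_0$ yields precisely the threshold $\tilde\beta_1$ of \eqref{def_beta12}.

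The only real obstacle is the bookkeeping of the H\"older exponents: one must carve the single exponential into a supermartingale piece with index strictly above $1$, a quadratic-variation piece dominated by the leftover dissipation, a Wiener-increment piece finite for large $N$, and (in case (ii)) an initial-condition piece under $\tilde\gamma_0$, with reciprocal exponents summing to at most $1$, and then verify that this system is solvable exactly on $(0,\tilde\beta_0)$, resp.\ $(0,\tilde\beta_1)$. This is the same juggling already carried out in the proof of Proposition~\ref{exp_mom_scheme_ut}, but with the entire velocity block and the parameters pertaining to it removed, so no genuinely new estimate is required.
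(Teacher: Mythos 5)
Your proposal is correct and follows the paper's own route: the paper gives no separate proof of this corollary, presenting it as the $C_L=0$ specialization of Proposition \ref{exp_mom_scheme_ut} (whose thresholds \eqref{def_beta0} and \eqref{beta_1} indeed reduce to $\tilde\beta_0$ and $\tilde\beta_1$), and your self-contained sketch is exactly the temperature half of that proposition's proof with the coupling and velocity blocks deleted. Your handling of the exponential-supermartingale factor via the $1/\tilde\mu$-th power and Doob's inequality is, if anything, slightly more careful than the paper's.
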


When $C_L=0$, the following result proves the existence of exponential moments for the discretization scheme $\{ u^n\}_n$.
\begin{prop}		\label{exp_mom_schemeu}
Suppose that $C_L=0$.

 (i) Let $\theta_0\in H^0$,  $u_0\in V^0$ be deterministic; let $\tilde{\lambda}_1$ and $\lambda_1$  be defined by \eqref{tilde_lambda_1} and \eqref{lambda_1} respectively. 
Let   $\alpha_1:=  \frac{\nu \kappa \lambda_1 \tilde{\lambda}_1}{4\big[ 2 T {\rm Tr}(\QQ) + \kappa \tilde{\lambda}_1 {\rm Tr}(Q)\big]} $; 
then  for any $\alpha \in \big(0,  \alpha_1\big)$ we have for $N$ large enough 
\begin{equation} 	\label{exp-moments-Aun}
\EE\Big[ \exp\Big( \alpha  \max_{0\leq n \leq N}  \Big\{  \| u^n\|_{V^0}^2  +   \frac{T}{N} \nu \sum_{l=1}^n \| {A}^{\frac{1}{2}} u^l\|_{V^0}^2 \Big\} \Big)  \Big] 
= C(\alpha) < \infty. 
\end{equation} 

 (ii)  Let $\theta_0$ and $u_0$ be random, ${\mathcal F}_0$-measurable  and satisfy \eqref{exp_mom_ut0L2}.   Set 
 \begin{equation}	\label{def_alpha01}
  \tilde{\alpha}_1 := \frac{\nu \kappa \lambda_1 \tilde{\lambda}_1}{4\big[ 2T {\rm Tr}(\QQ) + \kappa  \tilde{\lambda}_1 {\rm Tr}(Q)\big] +\frac{2\kappa \tilde{\lambda}_1 T}{\tilde{\gamma}_0}
 + \frac{\nu \kappa \lambda_1 \tilde{\lambda}_1}{\gamma_0}}.
 \end{equation} 
 Then   for 
 $\alpha \in  (0, \tilde{\alpha}_1) $,  the inequality  
   \eqref{exp-moments-Aun} holds for $N$ large enough. 

\end{prop}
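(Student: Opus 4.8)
The plan is to transpose the argument of Proposition~\ref{exp_mom_scheme_ut}, exploiting the fact that for $C_L=0$ the discrete temperature equation \eqref{Euler_t_add} is decoupled from the velocity. Consequently Corollary~\ref{cor_exp_mom_t} already furnishes finite exponential moments of
\[
\Theta_N:=\max_{0\le n\le N}\Big\{\|\theta^n\|_{H^0}^2+\tfrac{T}{N}\,\kappa\sum_{l=1}^n\|\tilde{A}^{\frac12}\theta^l\|_{H^0}^2\Big\},
\]
with any exponent $\beta<\tilde{\beta}_0=\tfrac{\kappa\tilde{\lambda}_1}{4{\rm Tr}(\tilde{Q})}$ in case (i), and any $\beta<\tilde{\beta}_1=\tfrac{\kappa\tilde{\lambda}_1\tilde{\gamma}_0}{4{\rm Tr}(\tilde{Q})\tilde{\gamma}_0+\kappa\tilde{\lambda}_1}$ in case (ii); the only interaction left is through the buoyancy term $\theta^{l-1}v_2$ in \eqref{Euler_u_add}. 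I would start from inequality \eqref{diff-ul_add-2} with $\epsilon=\tfrac12$, multiply it by $\alpha>0$, sum over $l=1,\dots,n$, telescope the left-hand side, and then pass to $\max_{1\le n\le N}$. Writing $\mathcal{M}_{t_n}=2\alpha\sum_{l=1}^n(u^{l-1},\Delta_l W)$, the term carrying the temperature is $\tfrac{2\alpha}{\nu\lambda_1}\tfrac{T}{N}\sum_{l=1}^n\|\theta^{l-1}\|_{H^0}^2$, which I would bound crudely by $\tfrac{2\alpha T}{\nu\lambda_1}\max_{0\le l\le N}\|\theta^l\|_{H^0}^2\le\tfrac{2\alpha T}{\nu\lambda_1}\,\Theta_N$; this is the source of the factor $T\,{\rm Tr}(\tilde{Q})$ in $\alpha_1$ and $\tilde{\alpha}_1$.

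Next I would exponentiate and split $\mathcal{M}_{t_n}=\big(\mathcal{M}_{t_n}-\tfrac{\mu}{2}\langle\mathcal{M}\rangle_{t_n}\big)+\tfrac{\mu}{2}\langle\mathcal{M}\rangle_{t_n}$ for a parameter $\mu>1$. Using the bound $\langle\mathcal{M}\rangle_{t_n}\le 4\alpha^2{\rm Tr}(Q)\tfrac{T}{N}\sum_{l=0}^{n-1}\|u^l\|_{V^0}^2$ from \eqref{quadvar_Mn} together with the Poincar\'e inequality \eqref{lambda_1}, the contribution of $\tfrac{\mu}{2}\langle\mathcal{M}\rangle_{t_n}$ is at most $\tfrac{2\mu\alpha^2{\rm Tr}(Q)}{\lambda_1}\tfrac{T}{N}\sum_{l=1}^{n-1}\|A^{\frac12}u^l\|_{V^0}^2$ plus a harmless term $2\mu\alpha^2{\rm Tr}(Q)\tfrac{T}{N}\|u_0\|_{V^0}^2$. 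The former is absorbed by the spare dissipation $-\tfrac{\alpha\nu}{2}\tfrac{T}{N}\sum_{l}\|A^{\frac12}u^l\|_{V^0}^2$ kept on the right-hand side of \eqref{diff-ul_add-2}, provided $\tfrac{2\mu\alpha\,{\rm Tr}(Q)}{\lambda_1}\le\tfrac{\nu}{2}$, i.e.\ $\mu\le\tfrac{\nu\lambda_1}{4\alpha{\rm Tr}(Q)}$. After this absorption one is left, taking the maximum over $n$, with a product of four factors: $\exp\big(\alpha(1+o(1))\|u_0\|_{V^0}^2\big)$ (a constant in case (i)), the factor $\exp\big(\tfrac{2\alpha T}{\nu\lambda_1}\Theta_N\big)$, the maximal exponential supermartingale $\max_{1\le n\le N}\exp\big(\mathcal{M}_{t_n}-\tfrac{\mu}{2}\langle\mathcal{M}\rangle_{t_n}\big)$, and $\exp\big(\alpha\sum_{l=1}^N\|\Delta_l W\|_{V^0}^2\big)$.

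I would then apply H\"older's inequality with conjugate exponents $q$ (for the $\Theta_N$ factor), $\mu$, $p_2$ (for the $\|u_0\|_{V^0}^2$ factor, needed only in case (ii)) and $p$ (for the $\Delta_l W$ factor). The maximal supermartingale factor has expectation at most $1+\tfrac{1}{\mu-1}$, obtained by writing it as the $1/\mu$-th power of the nonnegative supermartingale $\exp\big(\mu\mathcal{M}_{t}-\tfrac{\mu^2}{2}\langle\mathcal{M}\rangle_{t}\big)$ and invoking the weak-type maximal inequality for nonnegative supermartingales; the $\Delta_l W$ factor is finite for every exponent $p$ by \eqref{mom_delta_W} once $N$ is large; the $\Theta_N$ factor is finite as soon as $\tfrac{2\alpha T q}{\nu\lambda_1}<\tilde{\beta}_0$ in case (i), respectively $<\tilde{\beta}_1$ in case (ii), by Corollary~\ref{cor_exp_mom_t}; and in case (ii) the $\|u_0\|_{V^0}^2$ factor is finite, by \eqref{exp_mom_ut0L2}, as soon as $p_2\alpha(1+o(1))<\gamma_0$, which holds for $N$ large since the prefactor tends to $\alpha$. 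Collecting the constraints $\tfrac{1}{\mu}\ge\tfrac{4\alpha{\rm Tr}(Q)}{\nu\lambda_1}$, $\tfrac{1}{q}>\tfrac{2\alpha T}{\nu\lambda_1\tilde{\beta}_0}$ (resp.\ $\tfrac{1}{q}>\tfrac{2\alpha T}{\nu\lambda_1\tilde{\beta}_1}$), $\tfrac{1}{p_2}>\tfrac{\alpha}{\gamma_0}$, together with $\tfrac{1}{q}+\tfrac{1}{\mu}+\tfrac{1}{p_2}+\tfrac{1}{p}=1$ and letting $p\to\infty$, and substituting the explicit values of $\tilde{\beta}_0$ and $\tilde{\beta}_1$, the feasibility of this system is equivalent to $\alpha<\alpha_1$ in case (i) and to $\alpha<\tilde{\alpha}_1$ in case (ii), which yields \eqref{exp-moments-Aun}. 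The main obstacle is purely organizational: tracking the several H\"older exponents and the various "$N$ large enough" requirements (for \eqref{mom_delta_W}, for Corollary~\ref{cor_exp_mom_t}, and to make the $u_0$-prefactor close to $\alpha$ in case (ii)) so that the admissible range of $\alpha$ comes out exactly as stated. No genuinely new analytic difficulty arises, since $C_L=0$ removes the coupling that forced the simultaneous treatment of velocity and temperature in Proposition~\ref{exp_mom_scheme_ut}.
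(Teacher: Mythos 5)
Your proposal follows essentially the same route as the paper: sum \eqref{diff-ul_add-2}, exponentiate, compensate the discrete martingale by $\tfrac{\mu}{2}\langle\mathcal M\rangle$, absorb the quadratic variation into the spare dissipation via Poincar\'e, control the temperature factor through Corollary~\ref{cor_exp_mom_t}, and split by H\"older — and your fixed choice $\epsilon=\tfrac12$ produces exactly the same thresholds $\alpha_1$ and $\tilde\alpha_1$ as the paper's balancing choice of $\epsilon$. Your handling of the factor $\max_n\exp(\mathcal M_{t_n}-\tfrac{\mu}{2}\langle\mathcal M\rangle_{t_n})$ via the $1/\mu$-power of the exponential supermartingale and the weak-type maximal inequality is in fact slightly more careful than the paper's bare assertion that this maximal term has expectation at most one, but this is a standard point and does not change the argument.
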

\begin{proof} 
We use the notations and some computations used in the proof of Proposition \ref{exp_mom_scheme_ut}.
 Adding the upper estimates \eqref{diff-ul_add-2} for $l=1$ to $n$ we deduce that for every $n=1, ..., N$, $\alpha >0$ and $\epsilon \in (0,1)$,
\begin{align*}
\alpha\Big(  \|u^n\|_{V^0}^2  &+ h \nu \sum_{l=1}^n \|A^{\frac{1}{2}} u^l\|_{V^0}^2  \Big)
 \leq \; \alpha \|u_0\|_{V^0}^2 + \frac{\alpha h}{\epsilon \nu \lambda_1} \|\theta_0\|_{H^0}^2 + 2\alpha \sum_{l=1}^{n-1} \big( \Delta_l W, u^{l-1}\big)  \\
 & \quad  + \alpha \sum_{l=1}^n 
\| \Delta_l W\|_{V^0}^2  - \alpha (1-\epsilon) h\sum_{l=1}^n \|A^{\frac{1}{2}} u^l\|_{V^0}^2 + \frac{\alpha}{\epsilon \lambda_1 \nu} h  \sum_{l=1}^{n-1} \|\theta^l\|_{H^0}^2. 
\end{align*} 
We at first prove \eqref{exp-moments-Aun} and write an analog of equation \eqref{max_form1}, focusing on the velocity in the left hand side. Let $\{M_n\}_n$ and 
$({\mathcal M}_t, {\mathcal F}_t, t\in [0,1]) $ be defined in the proof of Proposition \ref{exp_mom_scheme_ut}. Then for $\mu \in (1,\infty)$ we have 
 \begin{align}		\label{max_form1_C0}
\exp\Big(& \max_{1\leq n\leq N}  \Big[ \alpha \|u^n\|_{V^0}^2  + \alpha \nu h \sum_{l=1}^n \|A^{\frac{1}{2}} u^l \|_{V^0}^2 \Big] \Big) \nonumber \\
\leq & \exp\big( \alpha \|u_0\|_{V^0}^2\big)  \exp\Big( \frac{\alpha h}{\epsilon \nu \lambda_1} \|\theta_0\|_{H^0}^2  \Big) 
  \exp\Big(\alpha  \sum_{l=1}^N   \|\Delta_l W\|_{V^0}^2 \Big) \nonumber  \\
&\times 
\Big\{ \max_{1\leq n\leq N} \exp\Big( {\mathcal M}_{t_n} - \frac{\mu}{2} \langle {\mathcal M}\rangle_{t_n}\Big) \Big\}\ \exp\Big( \frac{\alpha T}{\epsilon \nu \lambda_1} 
\max_{1\leq n\leq N} \|\theta^n\|_{H^0}^2 \Big) \prod_{i=1}^2 \tau_i, 
\end{align} 
where 
\begin{align*} \tau_1=& \exp\Big( - \alpha \nu (1-\epsilon) h \sum_{l=1}^{N-1}   \| A^{\frac{1}{2}} u^l\|_{V^0}^2 \Big), \\
\tau_2= & \exp\Big( \frac{\mu}{2} 4\alpha^2 {\rm Tr}(Q) h \sum_{l=1}^{N-1} \|u^l\|_{V^0}^2 \Big).
\end{align*} 
The Poincar\'e inequality \eqref{lambda_1} implies that if  $\alpha \frac{2\mu {\rm Tr}(Q)}{\lambda_1} \leq \nu (1-\epsilon)$ for some $\epsilon \in (0,1)$, 
we have $\tau_1 \tau_2 \leq 1$.
Furthermore, \eqref{mom_delta_W} implies that  for $p\in (1,\infty)$ we have  for $N$ large enough 
 $\EE\big[ \exp\big( p \alpha \sum_{l=1}^N \|\Delta_l W\|_{H^0}^2 \big) \big] <\infty$ for $N$ large enough. 
\smallskip

(i) Let $u_0$ and $\theta_0$ be deterministic. Let $p_1, p_2, \mu\in (1,\infty)$ be such that $\frac{1}{p_1} + \frac{1}{p_2} + \frac{1}{\mu} =1$ and suppose that
 $\alpha \frac{2\mu {\rm Tr}(Q)}{\lambda_1} \leq \nu (1-\epsilon)$ for some $\epsilon \in (0, 1)$.  Apply H\"older's inequality with exponents $p_1, p_2$ and $\mu$; this yields
\begin{align*}
\EE\Big[ \exp\Big(& \max_{1\leq n\leq N}  \Big[ \alpha \|u^n\|_{V^0}^2  + \alpha \nu h \sum_{l=1}^n \|A^{\frac{1}{2}} u^l \|_{V^0}^2 \Big] \Big) \Big] \\
\leq & \; \exp\big( \alpha \|u_0\|_{V^0}^2\big)  \exp\Big( \frac{\alpha h}{\epsilon \nu \lambda_1} \|\theta_0\|_{H^0}^2  \Big) 
\EE\Big[ \exp\Big(\alpha  p_2  \sum_{l=1}^N   \|\Delta_l W\|_{V^0}^2 \Big) \Big] \nonumber  \\
&\times 
\EE\Big[ \Big\{ \max_{1\leq n\leq N} \exp\Big(\mu  {\mathcal M}_{t_n} - \frac{\mu^2}{2} \langle {\mathcal M}\rangle_{t_n}\Big) \Big\}\Big] \;  \EE\Big[ \exp\Big( \frac{\alpha Tp_1}{\epsilon \nu \lambda_1} 
\max_{1\leq n\leq N} \|\theta^n\|_{H^0}^2 \Big) \Big]. 
\end{align*} 
The upper estimate \eqref{exp-moments-At_al_0} in Corollary \ref{cor_exp_mom_t} 
 implies that  for $N$ large enough, 
 \[ \EE\Big[ \exp\Big( \frac{\alpha T}{\epsilon \nu \lambda_1} p_1 \max_{1\leq n\leq N} \|\theta^n\|_{H^0}^2 \Big) \Big] <\infty\quad {\rm  if } \quad 
 \frac{\alpha T}{\epsilon \nu \lambda_1} p_1 < \frac{\kappa \tilde{\lambda}_1}{4{\rm Tr}(\QQ)},\] 
Since  $\mu$ is chosen such that $\EE\Big[ \max_{1\leq n\leq N} \exp\Big(\mu  {\mathcal M}_{t_n} - \frac{\mu^2}{2} \langle {\mathcal M}\rangle_{t_n}\Big) \Big\}\Big] \leq 1$ and
given any $p_2\in (1,\infty)$,  $ \EE\big[ \exp \big( \alpha p_2  \sum_{l=1}^N   \|\Delta_l W\|_{V^0}^2 \Big) \Big] <\infty$ for $ N$ large enough, we deduce that 
we have to find $\mu$ and $p_1$ such that 
\[ 1 > \frac{1}{p_1} + \frac{1}{\mu} > \frac{2\alpha}{\nu \lambda_1} \Big[  \frac{2 T {\rm Tr}(\QQ)}{\epsilon \kappa \tilde{\lambda}_1} + \frac{{\rm Tr}(Q)}{1-\epsilon}\Big] \]
for some $\epsilon \in (0, 1)$.  
 Choose $\epsilon$  such that $\frac{2T {\rm Tr}(\QQ)}{\epsilon \kappa \tilde{\lambda_1}} = \frac{{\rm Tr}(Q)}{1-\epsilon}$, that is
 $\epsilon = \frac{2T {\rm Tr}(\QQ)}{2T {\rm Tr}(\QQ) + \kappa \tilde{\lambda}_1 {\rm Tr}(Q)}< 1$;
  this yields 
\[ \alpha_1:= \frac{\nu \kappa \lambda_1 \tilde{\lambda}_1}{4\big[ 2T {\rm Tr}(\QQ) + \kappa \tilde{\lambda}_1 {\rm Tr}(Q)\big]} \]
such that for every $\alpha \in (0, \alpha_1)$  the estimate \eqref{exp-moments-Aun} holds, 
\smallskip

(ii) Let $u_0$ and $\theta_0$ be random such that \eqref{exp_mom_ut0L2} holds. We at first prove \eqref{exp-moments-Aun}. Since for every $\epsilon >0$, $\alpha >0$ and $p\in (1,\infty)$
 we have  $\frac{\alpha h}{\epsilon \nu \lambda_1} < \tilde{\gamma}_0$ is $N$is large enough, we have to find positive exponents $p_1, p_2$ and $\nu$ such that $\frac{1}{p_1} +
 \frac{1}{p_2} + \frac{1}{\mu} < 1$, and for  $\tilde{\beta}_1$  defined in Corollary  \ref{cor_exp_mom_t}
  \[ \frac{\alpha 2 \mu {\rm Tr}(Q)}{\lambda_1} < \nu(1-\epsilon), \quad \frac{\alpha T p_1}{\epsilon \nu \lambda_1} <  \tilde{\beta}_1
   \quad {\rm and} \quad  \alpha p_2 \leq \gamma_0  \] 
for some $\epsilon \in (0, 1)$. This can be summarized as
 \[ 1 > \alpha\Big[ \frac{T}{\epsilon \nu \lambda_1 \tilde{\beta}_1}  + \frac{1}{\gamma_0} + \frac{2{\rm Tr}(Q)}{\nu \lambda_1 (1-\epsilon)}\Big],\]
for some $\epsilon \in (0,1)$.  Choose $\epsilon$ such that $\frac{T}{\epsilon \tilde{\beta}_1} = \frac{2 {\rm Tr}(Q)}{1-\epsilon}$, that is
$\epsilon = \frac{T}{T+ 2 \tilde{\beta}_1 {\rm Tr}(Q)} \in (0, 1)$. 
We deduce  that for 
\[ \tilde{\alpha}_1:= \frac{\nu \lambda_1  \tilde{\beta}_1  \gamma_0}{2\gamma_0 \big[ T + 2 {\rm Tr}(Q)  \tilde{\beta}_1  \big] 
+ \nu \lambda_1 \tilde{\beta}_1 }, \]  
when  $\alpha \in (0,\tilde{\alpha}_1)$ the estimate \eqref{exp-moments-Aun} is satisfied for $N$ large enough.
Plugging in the value of $\tilde{\beta}_1$ from \eqref{def_beta12}  concludes the proof. 

Once more, as $\gamma_0$ and $\tilde{\gamma}_0$ increase to $+\infty$,  $\tilde{\alpha}_1 \to \alpha_1$; this shows that the results stated in parts
(i) and (ii) are consistent.
\end{proof}

\section{Strong convergence of the semi-implicit time Euler scheme - additive noise}		\label{s-converg}
The following theorem is one of the main results of this paper. It proves the "optimal" polynomial rate of convergence of the scheme uniformly on the time grid in $V^0$ 
for the velocity, and in $H^0$ for the temperature. 
For $j=0, ...,N$ recall that  $e_j:= u(t_j)-u^j$ and $ \tilde{e}_j:= \theta(t_j)-\theta^j$; then $e_0=\tilde{e}_0=0$.

\begin{theorem}		\label{th-s-cv_CLneq0}
Suppose that  $|C_L| \in \Big(0, \frac{\nu \kappa \lambda_1 \tilde{\lambda}_1}{4}\Big)$. 

Let  $u_0\in V^1$ a.s. and $\theta_0\in H^1$ a.s. 
Let $u$ and $\theta$ be the solutions of \eqref{def_u} and \eqref{def_t} respectively, $\{ u^l\}_{l=0,..., N}$ and $\{\theta^l\}_{l=1, ..., N}$ be the semi-implicit
Euler scheme defined by \eqref{Euler_u} and \eqref{Euler_t}  respectively. 
Then if either assumption (i) or (ii) is satisfied,
we have 
\begin{equation}	\label{s-rate}
\EE\Big( \max_{1\leq l\leq N} \big( \|u(t_l)-u^l\|_{V^0}^2 + \|\theta(t_l)-\theta^l\|_{H^0}^2 \big)\Big) \leq C \Big(\frac{T}{N}\Big)^\eta,\qquad \eta \in (0,1).
\end{equation}

(i)  $u_0\in V^1$,  $\theta_0\in H^1$ are deterministic,  and 
\[ \tilde{\lambda}_1 \kappa |C_L| {\rm Tr(Q)} + \lambda_1 \nu {\rm Tr}(\QQ)  <   \frac{\nu \kappa \lambda_1 \tilde{\lambda}_1 - 4 |C_L] }{
 \frac{ \bar{C}_4^4}{\kappa} \Big( \frac{1}{\nu} + \frac{1}{\kappa} + \frac{2\kappa}{\nu^2 |C_L|} \Big)}  \]

(ii)   $u_0$ and $\theta_0$ are ${\mathcal F}_0$-measurable and satisfy \eqref{exp_mom_ut0L2}.  
Suppose furthermore that $u_0 \in L^{p}(\Omega;V^1)$ and $\theta_0 \in L^p(\Omega;H^1)$ for any $p\in [2,\infty)$ (for example $u_0$ and $\theta_0$ are Gaussian random variables),  
Suppose furthermore that 
\[   \frac{ \bar{C}_4^4}{\kappa} \Big( \frac{1}{\nu} + \frac{1}{\kappa} + \frac{2\kappa}{\nu^2 |C_L|} \Big) <  4 \beta_1, 
\]
where the constant  $\beta_1$ 
is defined  by \eqref{beta_1} in Proposition  \ref{exp_mom_scheme_ut}. 
\end{theorem}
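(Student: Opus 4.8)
The plan is to run the energy argument of Proposition~\ref{th_loc_cv} directly on the error equations \eqref{def_error_u}--\eqref{def_error_t}, but \emph{without} the truncation on $\Omega_M$, exploiting that in the additive case $G=Id_{V^1}$, $\tilde G=Id_{H^1}$, so that $G(u(s))-G(u^{j-1})=0$ and $\tilde G(\theta(s))-\tilde G(\theta^{j-1})=0$: the noise cancels in the error equations, the stochastic--integral terms $T_{j,6},T_{j,7},\tilde T_{j,6},\tilde T_{j,7}$ disappear, and the error is governed by a purely pathwise (though random) recursion. First I would test \eqref{def_error_u} with $e_j$ and \eqref{def_error_t} with $\tilde e_j$, obtaining the analogues of \eqref{increm_error_u} and \eqref{incrfem_error_t}; I would then bound the bilinear contributions by the Gagliardo--Nirenberg inequality \eqref{GagNir} exactly as in \eqref{maj_Tj1}--\eqref{up_Tj3} and \eqref{up_tildeTj1}--\eqref{up_tildeTj3}, but \emph{without} rewriting $u^j,\theta^j$ as $u(t_j),\theta(t_j)$ (no measurability constraint is needed here), so that the Gronwall coefficients involve the \emph{scheme} norms $\|A^{\frac{1}{2}}u^j\|_{V^0}^2$ and $\|\AA^{\frac{1}{2}}\theta^j\|_{H^0}^2$; and I would bound the coupling terms $\int(\Pi[\theta(s)-\theta^{j-1}]v_2,e_j)\,ds$ and $-C_L\int(u_2(s)-u_2^{j-1},\tilde e_j)\,ds$ as in \eqref{up_Tj5} and \eqref{up_tildeTj5}. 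The decisive bookkeeping step is to multiply the velocity inequality by $|C_L|$ and the temperature inequality by $1$ before adding them, so that the controlled quantity $|C_L|\|e_j\|_{V^0}^2+\|\tilde e_j\|_{H^0}^2$ together with its dissipation $\frac TN(|C_L|\nu\|A^{\frac{1}{2}}e_j\|_{V^0}^2+\kappa\|\AA^{\frac{1}{2}}\tilde e_j\|_{H^0}^2)$ has precisely the structure of the expression in Proposition~\ref{exp_mom_scheme_ut}.

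Summing over $j$ and applying a discrete Gronwall inequality with \emph{random} coefficients (the current--step error terms being controlled by the dissipation as in the proof of Proposition~\ref{th_loc_cv}, which forces $N$ large) then yields, for $N$ large enough,
\begin{equation*}
\max_{1\le n\le N}\!\Big(|C_L|\|e_n\|_{V^0}^2+\|\tilde e_n\|_{H^0}^2\Big)+\frac TN\sum_{j=1}^N\!\Big(|C_L|\nu\|A^{\frac{1}{2}}e_j\|_{V^0}^2+\kappa\|\AA^{\frac{1}{2}}\tilde e_j\|_{H^0}^2\Big)\le \mathcal G_N\,\exp\!\big(c_0\,\mathcal Z_N\big),
\end{equation*}
where $\mathcal Z_N:=\max_{1\le n\le N}\big[|C_L|\|u^n\|_{V^0}^2+\|\theta^n\|_{H^0}^2+\frac TN\sum_{l=1}^n(|C_L|\nu\|A^{\frac{1}{2}}u^l\|_{V^0}^2+\kappa\|\AA^{\frac{1}{2}}\theta^l\|_{H^0}^2)\big]$ is exactly the quantity of \eqref{exp_mom_ut_reg}, and the factor $\mathcal G_N$ collects the time--increment integrals $\sum_j\int_{t_{j-1}}^{t_j}(\|u(s)-u(t_j)\|_{V^1}^2+\|u(s)-u(t_{j-1})\|_{V^1}^2+\|\theta(s)-\theta(t_j)\|_{H^1}^2+\|\theta(s)-\theta(t_{j-1})\|_{H^1}^2)\,ds$, whose $p$-th moments are $O((T/N)^{\eta p})$ for every $\eta\in(0,1)$ by Propositions~\ref{prop_regularity_L2}--\ref{prop_regularity_H1}, multiplied by suprema and maxima of the $V^0,V^1,H^0,H^1$ norms of $u,\theta$ and of $u^j,\theta^j$, which have finite moments of every order by Theorem~\ref{th-gwp} and Propositions~\ref{prop_u_V1}, \ref{prop_mom_t_H1}, \ref{prop_mom_scheme}. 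After optimising the Young parameters $\delta_i,\tilde\delta_i$ and letting the resulting slack $\gamma\downarrow0$, the constant is $c_0=\frac{\bar C_4^4}{4\kappa}\big(\frac1\nu+\frac1\kappa+\frac{2\kappa}{\nu^2|C_L|}\big)$.

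Taking expectations and applying H\"older's inequality with conjugate exponents $p,p'$ gives
\begin{equation*}
\EE\Big(\max_{1\le n\le N}\big[|C_L|\|e_n\|_{V^0}^2+\|\tilde e_n\|_{H^0}^2\big]\Big)\le\big(\EE[\mathcal G_N^{\,p}]\big)^{1/p}\big(\EE[e^{p'c_0\mathcal Z_N}]\big)^{1/p'}.
\end{equation*}
In case (i), with $u_0,\theta_0$ deterministic, the stated inequality on ${\rm Tr}(Q),{\rm Tr}(\QQ)$ is precisely $c_0<\beta_0$ with $\beta_0$ as in \eqref{def_beta0}; in case (ii) the stated inequality is precisely $c_0<\beta_1$ with $\beta_1$ as in \eqref{beta_1}. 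Hence one may choose $p'\in(1,\beta_0/c_0)$ (resp. $(1,\beta_1/c_0)$), and then Proposition~\ref{exp_mom_scheme_ut}(i) (resp. (ii)) gives $\sup_N\EE[e^{p'c_0\mathcal Z_N}]<\infty$ for $N$ large; while for the conjugate exponent $p$ a further application of H\"older together with the increment bounds \eqref{increm_u_L2}--\eqref{increm_t_L2}, \eqref{mom_increm_u_1}--\eqref{mom_increm_t_1} and the uniform moment bounds above gives $\EE[\mathcal G_N^{\,p}]\le C(T/N)^{\eta' p}$ for any $\eta'\in(0,\eta)$ (in case (ii) this step uses the assumption $u_0\in L^p(\Omega;V^1)$, $\theta_0\in L^p(\Omega;H^1)$ for all $p$). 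Dividing through by $\min(|C_L|,1)$ and relabelling $\eta'$ as $\eta$ yields \eqref{s-rate}. The main obstacle is the bookkeeping of the first two paragraphs: one must carry the Gronwall argument through in the untruncated setting so that the random exponent is $c_0\mathcal Z_N$ with the \emph{sharp} constant $c_0$ --- i.e. with exactly the coefficients $\frac1\nu+\frac1\kappa+\frac{2\kappa}{\nu^2|C_L|}$ --- which requires a coupled choice of the Young parameters, a careful treatment of the $|C_L|$--weighting and of the cross terms between the velocity and temperature errors, and control of the current--step error terms by the dissipation; it is this constant $c_0$, weighed against the exponential--moment thresholds $\beta_0,\beta_1$, that produces the smallness conditions on $|C_L|$ and on the traces.
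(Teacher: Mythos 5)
Your proposal is correct and follows essentially the same route as the paper's proof: cancellation of the noise in the additive error equations, a pathwise discrete Gronwall bound whose random exponent involves the scheme norms $\|A^{\frac{1}{2}}u^j\|_{V^0}^2$ and $\|\tilde{A}^{\frac{1}{2}}\theta^j\|_{H^0}^2$, and H\"older's inequality combined with the exponential moments of Proposition~\ref{exp_mom_scheme_ut}, with the smallness hypotheses arising exactly as $c_0<\beta_0$ (resp.\ $c_0<\beta_1$) for $c_0=\frac{\bar C_4^4}{4\kappa}\big(\frac1\nu+\frac1\kappa+\frac{2\kappa}{\nu^2|C_L|}\big)$. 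The only cosmetic difference is that the paper keeps the summed error inequalities unweighted and introduces the $|C_L|$-weighting only when matching the Gronwall exponent against the functional in \eqref{exp_mom_ut_reg} (and it dumps the current-step bilinear term into the remainder $Z_1$ via a crude bound on $\|e_n\|_{V^0}^2$ rather than absorbing it into the dissipation), whereas you weight the velocity inequality by $|C_L|$ before summing; both lead to the same constraints.
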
 
\begin{proof}
Using \eqref{def_error_u}--\eqref{def_error_t} reformulated for an additive stochastic perturbation, we deduce that  $j=1, ...,N$, $\varphi\in V^1$ and $\psi\in H^1$, 
\begin{align}	\label{def_error_u_add}
\big( &e_j-e_{j-1}\, , \, \varphi \big) + \nu \!\int_{t_{j-1}}^{t_j} \! \!\big(  A^{\frac{1}{2}}  [ u(s) -   u^j ] ,  A^{\frac{1}{2}}  \varphi\big)  ds 
+ \int_{t_{j-1}}^{t_j} \!\! \big\langle B(u(s),u(s)) - B(u^{j-1},u^j) ,  \varphi\big\rangle ds \nonumber \\
&\qquad  =  \int_{t_j}^{t_{j+1}} \big( \Pi [\theta(s)-\theta^{j-1} ] v_2, \varphi\big) ds,  
\end{align}
and
\begin{align}		\label{def_error_t_add}
\big( &\ee_j-\ee_{j-1}\, , \, \psi \big) + \kappa \!\int_{t_{j-1}}^{t_j} \! \! \! \big( \AA^{\frac{1}{2}}  [ \theta(s) -   \theta^j ] ,  \AA^{\frac{1}{2}}  \psi\big)  ds 
+ \int_{t_{j-1}}^{t_j} \!\! \! \big\langle [u(s).\nabla]\theta(s)  - [u^{j-1}.\nabla]\theta^j] ,  \psi\big\rangle ds  \nonumber \\
&\qquad + C_L  \int_{t_{j-1}}^{t_j} \big( u_2(s)-u_2^{j-1}\, , \, \psi) ds =0 .  
\end{align}
Write \eqref{def_error_u_add} with $\varphi = e_j$;  
using the equation $(a-b,a)=\frac{1}{2} \big[ a^2-b^2+|a-b|^2\big]$, this yields
\begin{equation}		\label{error_u}
\|e_j\|_{V^0}^2 - \|e_{j-1}\|_{V^0}^2 + \|e_j-e_{j-1}\|_{V^0}^2  + 2  \nu h \|A^{\frac{1}{2}}  e_j\|_{V^0}^2  =  2 \sum_{i=1}^5 T_{j,i},\quad j=1, ..., N, 
\end{equation}
where using the antisymmetry \eqref{B} we have 
\begin{align*}
T_{j,1}=&\;  - \int_{t_{j-1}}^{t_j} \big\langle B\big(e_{j-1}, u^j \big)\, , \, e_j\big\rangle ds = - \int_{t_{j-1}}^{t_j} \big\langle B(e_{j-1}, u(t_j) \, , \, e_j\big\rangle ds \\
T_{j,2}=&\;  -\int_{t_{j-1}}^{t_j} \big\langle B\big(u(s)-u(t_{j-1}) , u^j  \big)\, , \, e_j \big\rangle ds  =  -\int_{t_{j-1}}^{t_j} \big\langle B\big(u(s)-u(t_{j-1}) , u(t_j) \big)\, , \, e_j \big\rangle ds,\\
T_{j,3}=&\;  - \int_{t_{j-1}}^{t_j} \big\langle B\big(u(s), u(s)-u^j \big) \, , \, e_j\big\rangle ds = - \int_{t_{j-1}}^{t_j} \big\langle B\big(u(s), u(s)-u(t_j) \big) \, , \, e_j\big\rangle ds, \\
T_{j,4} = &\; - \nu  \int_{t_{j-1}}^{t_j} \big( A^{\frac{1}{2}} [u(s)-u^j] \, , \, A^{\frac{1}{2}} e_j\big) ds, \quad T_{j,5}=  \int_{t_{j-1}}^{t_j} \big( [\theta(s) - \theta^{j-1} ] v_2 \, , \, e_j\big) ds.
\end{align*} 
The H\"older and Gagliardo-Nirenberg inequalities imply for $\delta_1 >0$ and $j=1, ..., n-1$
\begin{align}		\label{up_Tj1}
|T_{j,1}| \leq & \; \bar{C}_4^2 h \|e_{j-1}\|_{V^0}^{\frac{1}{2}} \|e_j\|_{V^0}^{\frac{1}{2}} \|A^{\frac{1}{2}} e_{j-1}\|_{V^0}^{\frac{1}{2}} \|A^{\frac{1}{2}} e_j\|_{V^0}^{\frac{1}{2}} 
\| A^{\frac{1}{2}} u^j \|_{V^0} \nonumber \\
\leq & \;  \delta_1 h \nu \big[ \|A^{\frac{1}{2}} e_{j-1}\|_{V^0}^2 +\|A^{\frac{1}{2}} e_j \|_{V^0}^2 \big]+ \frac{\bar{C}_4^4}{16 \delta_1 \nu } h \|A^{\frac{1}{2}} u^j \|_{V^0}^2  \big[\|e_{j-1}\|_{V^0}^2
+ \|e_j\|_{V^0}^2\big],
\end{align}
while
\begin{align}		\label{up_Tn1_Bis}
|T_{n,1}| \leq & \delta_1 h \nu \big[ \|A^{\frac{1}{2}} e_{n-1}\|_{V^0}^2 + \| A^{\frac{1}{2}} e_n\|_{V^0}^2 \big] 
 + \frac{\bar{C}_4^4}{16 \delta_1 \nu } h  \|A^{\frac{1}{2}} u(t_n)\|_{V^0}^2
\big[ \|e^{n-1}\|_{V^0}^2 + \|e^n\|_{V^0}^2\big] .
\end{align}
Note that the terms $T_{j,i}, i=2, ..., 5$ are identical to those used in the decomposition \eqref{increm_error_u}. Hence we can upper estimates them as in the proof of  Proposition \ref{th_loc_cv}.

Adding the upper estimates \eqref{up_Tj2} -- \eqref{up_Tj5} for $j=1, ..., n$, \eqref{up_Tj1}  for $j=1, ..., n-1$ and  \eqref{up_Tn1_Bis}, we deduce  from \eqref{error_u} 
\begin{align}	\label{up_e_n}
\|e_n\|_{V^0}^2 &+ \sum_{j=1}^n \|e_j-e_{j-1}\|_{V^0}^2 + 2 \nu h \Big( 1-2\delta_1 - \sum_{i=2}^4 \delta_j\Big) \sum_{j=1}^n \|A^{\frac{1}{2}} e_j\|_{V^0}^2  \nonumber \\
\leq & 2 h  \sum_{j=1}^{n-1}  \|e_j\|_{V^0}^2  \Big( 3+  \frac{\bar{C}_4^4}{16\delta_1 \nu} \|A^{\frac{1}{2}} u^{j+1} \|_{V^0}^2  + \frac{\bar{C}_4^4}{16\delta_1 \nu} \|A^{\frac{1}{2}} u^j\|_{V^0}^2 \Big) 
\nonumber \\
&\;  +2  h \sum_{j=1}^{n-1} \|  \ee_j\|_{H^0}^2 + Z_1, 
\end{align}
where since $\|u(s)-u(t_j)\|_{V^0}^2  \leq \frac{1}{\lambda_1} \| A^{\frac{1}{2}} (u(s)-u(t_j))\|_{V^0}^2$, 
\begin{align} 		\label{Z1}  
Z_1= & \;  h C(\delta_1)  \big(\| u(t_n)\|_{V^0}^2 + \| u^n\|_{V^0}^2 \big)  \big( \|A^{\frac{1}{2}} u(t_n)\|_{V^0}^2 +1\big)
+ \sum_{j=1}^n   \int_{t_{j-1}}^{t_j} \!\! \|\theta(s)-\theta(t_{j-1}) \|_{H^0}^2 ds \nonumber \\
&\; + C(\delta_2, \delta_3, \delta_4 ) \sup_{s\in [0, t_n]} \big[1+\|A^{\frac{1}{2}} u(s)\|_{V^0}^2\big] \sum_{j=1}^n \int_{t_{j-1}}^{t_j} \!\!  \|A^{\frac{1}{2}} (u(s)-u(t_j))\|_{V^0}^2 ds .
\end{align} 
We then deal with the temperature. Write \eqref{def_error_t_add} with $\psi= \tilde{e}_j$; then since $\langle [u.\nabla] \theta\, , \, \theta\rangle =0$ , we have 
\begin{align} 		\label{norm_eej_add}
\|   \ee_j\|_{H^0}^2 &-  \| \ee_{j-1}\|_{H^0}^2 + \|  \ee_j- \ee_{j-1}\|_{H^0}^2 
+ 2 \kappa h \| \tilde{A}^{\frac{1}{2}} \ee_j\|_{H^0}^2     +  2\sum_{i=1}^4 \tilde{T}_{j,i},
\end{align}
where  
\begin{align*}
\tilde{T}_{j,1}&\; = -\int_{t_{j-1}}^{t_j} \!\! \big\langle  [e_{j-1}. \nabla] \theta^j \, , \,  \ee_j \big\rangle\, ds =  -\int_{t_{j-1}}^{t_j} \!\! \big\langle  [e_{j-1}. \nabla] \theta(t _j) \, , \,  \ee_j \big\rangle\, ds , \\
\tilde{T}_{j,2}&\; = -\int_{t_{j-1}}^{t_j} \!\! \big\langle  [u(s)-u(t_{j-1})]  . \nabla] \theta^j\, , \,  \ee_j \big\rangle \, ds  
 =  -\int_{t_{j-1}}^{t_j} \!\! \big\langle  [u(s)-u(t_{j-1})]  . \nabla] \theta(t_j) \, , \,  \ee_j \big\rangle \, ds ,\\
\tilde{T}_{j,3}&\; = -\int_{t_{j-1}}^{t_j} \!\! \big\langle  [u(s) .\nabla ] \big( \theta(s)-\theta^j\big) \, , \, \ee_j \big\rangle \, ds = 
 -\int_{t_{j-1}}^{t_j} \!\! \big\langle  [u(s) .\nabla ] \big( \theta(s)-\theta(t_j)\big) \, , \, \ee_j \big\rangle \, ds ,\\
\tilde{T}_{j,4}&\; =  -\kappa \int_{t_{j-1}}^{t_j} \!\! \big( \tilde{A}^{\frac{1}{2}} [\theta(s)-\theta(t_j)]\, , \, \tilde{A}^{\frac{1}{2}} \ee_j \big) ds, 
\quad \tilde{T}_{j,5}=-C_L \int_{t_{j-1}}^{t_j} \!\! \big( u_2(s)-u^{j-1}_2 , \ee_j\big) ds,
\end{align*}
Using the H\"older, Gagliardo and Young inequalities, we deduce for $\delta_5>0$  and  $j=1, ..., n-1$
\begin{align}		\label{up_tildeTj1add}
|\tilde{T}_{j,1}| \leq &\;  h \; \bar{C}_4^2 \; \|e_{j-1}\|_{V^0}^{\frac{1}{2}} \|A^{\frac{1}{2}} e_{j-1}\|_{V^0}^{\frac{1}{2}} \|\tilde{A}^{\frac{1}{2}} \theta^j\|_{H^0} \|\ee_j\|_{H^0}^{\frac{1}{2}}
\| \tilde{A}^{\frac{1}{2}} \ee_j\|_{H^0}^{\frac{1}{2}} \nonumber \\
\leq & \; \delta_5 \nu h  \|A^{\frac{1}{2}} e_{j-1}\|_{H^0}^2 + \tilde{\delta}_1 \kappa h \|\tilde{A}^{\frac{1}{2}} \ee_j\|_{H^0}^2  \nonumber \\
&\;  + \frac{\bar{C}_4^4}{16 \nu \delta_5}h  \|\tilde{A}^{\frac{1}{2}} \theta^j\|_{H^0}^2
\|e_{j-1}\|_{V^0}^2 +  \frac{\bar{C}_4^4}{16 \kappa \tilde{\delta}_1}h  \|\tilde{A}^{\frac{1}{2}} \theta^j\|_{H^0}^2  \| \ee_j\|_{H^0}^2.
\end{align}
as well as 
\begin{align}		\label{up_tildeTj1addBis}
|\tilde{T}_{n,1}| \leq &\; \delta_5 \nu h \|A^{\frac{1}{2}} e_{n-1}\|_{H^0}^2 + \tilde{\delta}_1 \kappa h \|\tilde{A}^{\frac{1}{2}} \ee_n\|_{H^0}^2  \nonumber \\
&\;  + \frac{\bar{C}_4^4}{16 \nu \delta_5}h  \|\tilde{A}^{\frac{1}{2}} \theta(t_n)\|_{H^0}^2
\|e_{n-1}\|_{V^0}^2 +  \frac{\bar{C}_4^4}{16 \kappa \tilde{\delta}_1}h  \|A^{\frac{1}{2}} \theta(t_n)\|_{H^0}^2  \| \ee_n\|_{H^0}^2 .
\end{align}
We upper estimate the terms $\tilde{T}_{j,i}$ for $j=1, ..., n$ and $i=2, ..., 5$ as in the proof of Proposition \ref{th_loc_cv}. 
Adding the upper estimates \eqref{norm_eej_add} for $j=1, ...,n$ and using  \eqref{up_tildeTj2}--\eqref{up_tildeTj5} for $j=1, ...,n$, \eqref{up_tildeTj1add} for $j=1, ..., n-1$ and \eqref{up_tildeTj1addBis}, we obtain 
\begin{align}		\label{upper_een-v1}
\|\ee_n\|_{H^0}^2 + & \sum_{j=1}^n \|\ee_j-\ee_{j-1}\|_{H^0}^2  
+ 2h\kappa \Big( 1-\sum_{i=1}^4 \tilde{\delta}_4 \Big)\sum_{j=1}^n  \| \tilde{A}^{\frac{1}{2}} \ee_j\|_{H^0}^2 
 \nonumber \\
\leq & \; 2 \delta_5 \nu h \sum_{j=1}^n \|A^{\frac{1}{2}} e_{j-1}\|_{V^0}^2  
+  h   \sum_{j=1}^{n-1}  \Big[  \frac{\bar{C}_4^4}{8 \nu \delta_5} \|\tilde{A}^{\frac{1}{2}} \theta^{j+1} \|_{H^0}^2   + |C_L| \Big] \|e_j\|_{V^0}^2  \nonumber \\
&\;  + h   \sum_{j=1}^{n-1}  \|\ee_j\|_{H^0}^2 \Big[ \frac{\bar{C}_4^4}{8 \kappa \tilde{\delta}_1} \|\tilde{A}^{\frac{1}{2}} \theta^j \|_{H^0}^2 
+ 4 + 2 |C_L| \Big] + Z_2, 
\end{align} 
where, using the inequality \eqref{lambda_1},  we have 
\begin{align}	\label{Z2}
Z_2=&\; C(\kappa, \tilde{\delta}_2,  C_L, \lambda_1)\Big[ 1+  \sup_{s\in [0,T]} \| \tilde{A}^{\frac{1}{2}} \theta(s)\|_{H^0}^2 \Big] 
\int_{t_{j-1}}^{t_j} \!\!  \| A^{\frac{1}{2}}[ u(s)-u(t_{j-1})] \|_{V^0}^2 ds\Big] \nonumber \\
 &\; +C(\kappa, \tilde{\delta}_3,  \tilde{\delta}_4)   \sup_{s\in [0,T]} \|A^{\frac{1}{2}} u(s)\|_{V^0}^2 
 \sum_{j=1}^n 
   \int_{t_{j-1}}^{t_j} \!\!   \|\tilde{A}^{\frac{1}{2}}[ \theta(s)-\theta(t_{j})]\|_{H^0}^2 ds  \nonumber \\
   &\; + C(\delta_5, \tilde{\delta}_1) h \|\tilde{A}^{\frac{1}{2}} \theta(t_n)\|_{H^0}^2 \Big[ \|u(t_{n-1})\|_{V^0}^2 + \|u^{n-1}\|_{V^0}^2 + \|\theta(t_n)\|_{H^0}^2 + \|\theta^n\|_{H^0}^2\Big].
\end{align} 
Adding the upper estimates \eqref{up_e_n} and \eqref{upper_een-v1}, we deduce that for any $n\in \{1, ..., N\}$ we have
\begin{align*}
\sup_{1\leq l \leq n} \Big[ & \|e_l\|_{V^0}^2 + \|\ee_l\|_{H^0}^2+  2 \nu h \Big( 1- 2\delta_1 -\sum_{i=2}^5 \delta_i\Big) \sum_{j=1}^l \|A^{\frac{1}{2}} e_j\|_{V^0}^2 \\
&\quad + 2\kappa h \Big( 1-\sum_{i=1}^4 \tilde{\delta}_i \Big)
\sum_{j=1}^l  \|\tilde{A}^{\frac{1}{2}} \ee_j\|_{H^0}^2 \Big] 
\leq  h \sum_{j=1}^{n-1} \big[ \|e_j\|_{V^0}^2 + \|\ee_j\|_{H^0}^2 \big]  Y_j + Z_1+Z_2,
\end{align*} 
where for $\delta_1 \in (0,\frac{1}{2})$, $0<\delta_5<  1-2\delta_1 $  and $\tilde{\delta}_1\in (0,1)$, 
\begin{align}		\label{Yj}
 Y_j &=\;   6+2  |C_L|+  \frac{\bar{C}_4^4}{8} \Big[  \frac{\|A^{\frac{1}{2}} u^{j+1}\|_{V^0}^2 + \|A^{\frac{1}{2}} u^j\|_{V^0}^2 }{\nu \delta_1 }   
+  \frac{ \|\tilde{A}^{\frac{1}{2}} \theta^{j+1} \|_{H^0}^2  }{\nu \delta_5}  +   \frac{\|\tilde{A}^{\frac{1}{2}} \theta^j \|_{H^0}^2}{\kappa \tilde{\delta}_1} \Big] .   
\end{align} 
Let $2\delta_1 + \sum_{i=2}^5 \delta_i <1$ and $\sum_{i=1}^4 \tilde{\delta}_i<1$. Then the discrete Gronwall lemma implies that 
\begin{align}	\label{up-error}
\max_{1\leq l\leq N} \big(  \|e_l\|_{V^0}^2 + \|\ee_l\|_{H^0}^2\big) \leq \exp\Big( h \sum_{j=1}^{N-1} Y_j\Big) \big[ Z_1+Z_2]. 
\end{align} 
Using the Cauchy-Schwarz inequality and Propositions~\ref{prop_u_V1}, \ref{prop_mom_t_H1} and \ref{prop_regularity_H1}, we deduce 
 that for any exponent $q\in (1,\infty)$, 
\[ \| Z_1 + Z_2\|_{L^q(\Omega)} \leq  C \; h^\eta, \quad \eta \in \Big( 0,\frac{1}{2} \Big).\]
Using  H\"older's inequality, we see that  the proof will be complete if we  check that under appropriate conditions, $\EE\big[ \exp \big( ph \sum_{j=1}^{N-1} Y_j\big) \big] <\infty$
for some $p\in (1,\infty)$, which can be chosen close to 1. 
\medskip

$\bullet$ {\bf Deterministic initial conditions $u_0$ and $\theta_0$}
For deterministic $u_0\in V^1$ and $\theta_0\in H^1$, 
let 
 $\tilde{\delta}_1\sim 1$, $\delta_1 \in (0, \frac{1}{2})$,  $\epsilon \in (0, 1-2\delta_1) $ and $\delta_5=1-2\delta_1-\epsilon$;  let $\beta_0$
be defined by \eqref{def_beta0} in the statement of  Proposition \ref{exp_mom_scheme_ut} to have \eqref{exp_mom_ut_reg}. We have to choose $p \in (1,\infty)$, close to 1,  such that   
\begin{equation}		\label{constraint_p-1}
 \frac{p \bar{C}_4^4}{4\nu^2 \delta_1 |C_L| } < \beta_0  , \quad \mbox{\rm and } \quad 
  \frac{p \bar{C}_4^4 }{4\kappa } \Big[ \frac{1}{\nu( 1-\epsilon - 2\delta_1)}+ \frac{1}{\kappa}\Big] < \beta_0 .
  \end{equation}  
Choose $\delta_1$ such that $\frac{1}{\delta_1 \nu |C_L|} = \frac{1}{\kappa (1-2\epsilon -2\delta_1)}$, that is $\delta_1=\frac{\kappa (1-2\epsilon)}{2\kappa+|C_L| \nu} < \frac{1}{2}-\epsilon$.  
 Then,  if 
 \[\frac{ \bar{C}_4^4}{\kappa} \Big( \frac{1}{\nu} + \frac{1}{\kappa} + \frac{2\kappa}{\nu^2 |C_L|}\Big) <  4 \beta_0= 
  \frac{\nu \kappa \lambda_1 \tilde{\lambda}_1 - 4 |C_L] }{\tilde{\lambda}_1 \kappa |C_L| {\rm Tr(Q)} + \lambda_1 \nu {\rm Tr}(\QQ)}. ,\] 
  we can find $\epsilon\sim 0$ and $p\sim 1$  such that \eqref{constraint_p-1} holds true.

  $\bullet$ {\bf Random initial conditions} Let $u_0$ and $\theta_0$ satisfy \eqref{exp_mom_ut0L2}. 
 Once more let  
  $\tilde{\delta}_1\sim 1$, $\delta_1 \in (0, \frac{1}{2})$,  $\epsilon \in (0, 1-2\delta_1) $ and $\delta_5=1-2\delta_1-\epsilon$; 
  let $\beta_1$
be defined by \eqref{beta_1} in the statement of  Proposition \ref{exp_mom_scheme_ut} to have \eqref{exp_mom_ut_reg} . We have to choose $p \in (1,\infty)$, close to 1,  such that     
\begin{equation}		\label{constraint_p-1_random}
 \frac{p \bar{C}_4^4}{4\nu^2 \delta_1 |C_L| } < \beta_1  , \quad \mbox{\rm and } \quad 
  \frac{p \bar{C}_4^4 }{4\kappa } \Big[ \frac{1}{\nu( 1-\epsilon - 2\delta_1)}+ \frac{1}{\kappa}\Big] < \beta_1 .
  \end{equation} 
Choose $\delta_1$ such that $\frac{1}{\delta_1 \nu |C_L|} = \frac{1}{\kappa (1-2\epsilon -2\delta_1)}$, that is $\delta_1=\frac{\kappa (1-2\epsilon)}{2\kappa+|C_L| \nu} < \frac{1}{2}-\epsilon$.  
 Then,  if 
 \[\frac{ \bar{C}_4^4}{\kappa} \Big( \frac{1}{\nu} + \frac{1}{\kappa} + \frac{2\kappa}{\nu^2 |C_L|}\Big) <  4 \beta_1= 
  \frac{\nu \kappa\lambda_1 \tilde{\lambda}_1 - 8|C_L|}{\big[ \tilde{\lambda}_1 \kappa |C_L| {\rm Tr}(Q)  + \lambda_1 \nu {\rm Tr}(\QQ)] 
+ (\nu \kappa \lambda_1 \tilde{\lambda}_1 -8 |C_L|) \frac{\gamma_0 + |C_L| \tilde{\gamma}_0}{\gamma_0 \tilde{\gamma_0}}}, 
\]
  we can find $\epsilon\sim 0$ and $p\sim 1$  such that \eqref{constraint_p-1} holds true. 
   \end{proof}

We next. give conditions on ${\rm Tr}(Q)$ and ${\rm Tr}(\QQ)$ to have a strong rate of convergence of the schemes of order "almost" 1/2 when $C_L=0$. 
\begin{theorem}		\label{th-s-cv_CL=0}
Suppose that $C_L=0$. 

Let 
 $u_0\in V^1$ a.s. and $\theta_0\in H^1$ a.s. 
Let $u$ and $\theta$ be the solutions of \eqref{def_u} and \eqref{def_t} respectively, $\{ u^l\}_{l=0,..., N}$ and $\{\theta^l\}_{l=1, ..., N}$ be the semi-implicit
Euler scheme defined by \eqref{Euler_u} and \eqref{Euler_t}  respectively. 
Then if either assumption (i) or (ii) is satisfied,
we have 
\begin{equation}	\label{s-rate_CL0}
\EE\Big( \max_{1\leq l\leq N} \big( \|u(t_l)-u^l\|_{V^0}^2 + \|\theta(t_l)-\theta^l\|_{H^0}^2 \big)\Big) \leq C \Big(\frac{T}{N}\Big)^\eta,\qquad \eta \in (0,1).
\end{equation}

(i)  $u_0\in V^1$,  $\theta_0\in H^1$ are deterministic,  and 
  \[ \frac{\bar{C}_4^4 {\rm Tr}(\QQ)}{\kappa \nu \tilde{\lambda}_1} \Big( \frac{\nu +2 \kappa}{\kappa^2}+\frac{8T}{\nu^2 \lambda_1 } \Big)
 + \frac{\bar{C}_4^4 {\rm Tr}(Q)}{\nu^3 \lambda_1} <1.
 \] 
 
(ii)   $u_0$ and $\theta_0$ are ${\mathcal F}_0$-measurable and satisfy \eqref{exp_mom_ut0L2}.  
Suppose furthermore that $u_0 \in L^{p}(\Omega;V^1)$ and $\theta_0 \in L^p(\Omega;H^1)$ for any $p\in [2,\infty)$ (for example $u_0$ and $\theta_0$ are Gaussian random variables). 
Suppose furthermore that
 \[ \frac{\bar{C}_4^4}{4 \kappa  \tilde{\beta}_1} \Big[ \frac{2}{\nu} + \frac{1}{\kappa} \Big] + \frac{\bar{C}_4^4}{\nu^2 \tilde{\alpha}_1} <1,
   \] 
where the constants   $\tilde{\alpha}_1$ and $\tilde{\beta}_1$ are 
 defined  by 
  \eqref{def_alpha01} 
in  Proposition  \ref{exp_mom_schemeu}, and by \eqref{def_beta12} in Corollary \ref{cor_exp_mom_t}.

\end{theorem}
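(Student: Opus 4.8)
The proof follows the strategy of Theorem~\ref{th-s-cv_CLneq0}; the simplification when $C_L=0$ is that the linear terms coupling the velocity and the temperature drop out, so the two processes can be controlled by \emph{separate} exponential moments --- Proposition~\ref{exp_mom_schemeu} for $\{u^l\}$ and Corollary~\ref{cor_exp_mom_t} for $\{\theta^l\}$. First I would rewrite the error equations \eqref{def_error_u_add}--\eqref{def_error_t_add} with $C_L=0$, test the velocity error equation with $e_j$ and the temperature error equation with $\tilde e_j$, and estimate the trilinear terms exactly as in the proofs of Proposition~\ref{th_loc_cv} and Theorem~\ref{th-s-cv_CLneq0}, keeping in the ``bad'' terms the factors $\|A^{\frac12}u^{j}\|_{V^0}^2$, $\|A^{\frac12}u^{j+1}\|_{V^0}^2$, $\|\AA^{\frac12}\theta^{j}\|_{H^0}^2$, $\|\AA^{\frac12}\theta^{j+1}\|_{H^0}^2$. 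Adding the estimates and applying the discrete Gronwall lemma gives, for $\delta_1\in(0,\tfrac12)$, $\delta_5\in(0,1-2\delta_1)$, $\tilde{\delta}_1\in(0,1)$,
\[
\max_{1\le l\le N}\big(\|e_l\|_{V^0}^2+\|\tilde e_l\|_{H^0}^2\big)\le \exp\Big(h\sum_{j=1}^{N-1}Y_j\Big)\,(Z_1+Z_2),
\]
where now
\[
Y_j = 6 + \frac{\bar{C}_4^4}{8}\Big[\frac{\|A^{\frac12}u^{j+1}\|_{V^0}^2+\|A^{\frac12}u^{j}\|_{V^0}^2}{\nu\delta_1}+\frac{\|\AA^{\frac12}\theta^{j+1}\|_{H^0}^2}{\nu\delta_5}+\frac{\|\AA^{\frac12}\theta^{j}\|_{H^0}^2}{\kappa\tilde{\delta}_1}\Big]
\]
(i.e.\ the $Y_j$ of Theorem~\ref{th-s-cv_CLneq0} with the $2|C_L|$ term removed), and $Z_1,Z_2$ are the time-increment remainders \eqref{Z1}, \eqref{Z2} without the $C_L$-dependent summands.

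Exactly as in Theorem~\ref{th-s-cv_CLneq0}, Cauchy--Schwarz together with Propositions~\ref{prop_u_V1}, \ref{prop_mom_t_H1} and \ref{prop_regularity_H1} yields $\|Z_1+Z_2\|_{L^q(\Omega)}\le C\,h^\eta$ for every $q\in(1,\infty)$ and $\eta\in(0,\tfrac12)$. By H\"older's inequality, \eqref{s-rate_CL0} then reduces to showing $\EE\big[\exp\big(p\,h\sum_{j=1}^{N-1}Y_j\big)\big]<\infty$ for $N$ large and some $p>1$, which may be taken arbitrarily close to $1$. Since $h\sum_{j=1}^{N-1}6\le 6T$ and $\sum_{j}\big(\|A^{\frac12}u^{j+1}\|_{V^0}^2+\|A^{\frac12}u^j\|_{V^0}^2\big)\le 2\sum_{l=1}^N\|A^{\frac12}u^l\|_{V^0}^2$ (and likewise for $\theta$), this amounts to a bound on $\EE\big[\exp\big(c_u\,h\sum_l\|A^{\frac12}u^l\|_{V^0}^2+c_\theta\,h\sum_l\|\AA^{\frac12}\theta^l\|_{H^0}^2\big)\big]$ with $c_u\asymp \frac{p\bar{C}_4^4}{4\nu\delta_1}$ and $c_\theta\asymp \frac{p\bar{C}_4^4}{8}\big(\frac1{\nu\delta_5}+\frac1{\kappa\tilde{\delta}_1}\big)$. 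A further H\"older split with exponents $q_1,q_2$, $\frac1{q_1}+\frac1{q_2}=1$, separates the velocity and temperature sums; the velocity factor is finite for $N$ large by Proposition~\ref{exp_mom_schemeu}(i) (deterministic data) or (ii) (random data satisfying \eqref{exp_mom_ut0L2}) provided $q_1 c_u<\nu\,\alpha_1$ (resp.\ $\nu\,\tilde{\alpha}_1$), and the temperature factor by Corollary~\ref{cor_exp_mom_t}(i) or (ii) provided $q_2 c_\theta<\kappa\,\tilde{\beta}_0$ (resp.\ $\kappa\,\tilde{\beta}_1$).

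It then remains to check that, under hypothesis (i) (resp.\ (ii)), the parameters $\delta_1,\delta_5,\tilde{\delta}_1$, the H\"older exponents $q_1,q_2$ and $p$ can be chosen to meet all these inequalities at once. Letting $\tilde{\delta}_1\uparrow1$, $\delta_5$ of order $\tfrac12$ (so that $2\delta_1+\delta_5<1$ still holds for $\delta_1$ small), $p\downarrow1$, and optimizing over $q_1,q_2$, the feasibility of the parameter choice becomes a single scalar inequality involving $\bar{C}_4^4$, $\nu$, $\kappa$ and the thresholds $\alpha_1,\tilde{\beta}_0$ (resp.\ $\tilde{\alpha}_1,\tilde{\beta}_1$); inserting the explicit values $\alpha_1=\frac{\nu\kappa\lambda_1\tilde{\lambda}_1}{4[2T{\rm Tr}(\QQ)+\kappa\tilde{\lambda}_1{\rm Tr}(Q)]}$ and $\tilde{\beta}_0=\frac{\kappa\tilde{\lambda}_1}{4{\rm Tr}(\QQ)}$ (resp.\ the expressions \eqref{def_alpha01} and \eqref{def_beta12}, which additionally absorb the $\gamma_0,\tilde{\gamma}_0$ contributions of the random initial data), and splitting $\alpha_1^{-1}$ into its ${\rm Tr}(Q)$-- and $T\,{\rm Tr}(\QQ)$--pieces, turns it into exactly the condition stated in (i) (resp.\ (ii)). This multi-parameter balancing --- keeping $p$, $q_1$, $q_2$ and $\tilde{\delta}_1$ simultaneously near their limiting values without exceeding the thresholds $\alpha_1,\tilde{\beta}_0$ (resp.\ $\tilde{\alpha}_1,\tilde{\beta}_1$), which already encode, through the chain Corollary~\ref{cor_exp_mom_t}$\Rightarrow$Proposition~\ref{exp_mom_schemeu}, the dependence of the velocity moments on the temperature moments --- is the only delicate point, and the strict inequalities in (i)/(ii) are precisely what leaves room for it. Every other step is identical to the proof of Theorem~\ref{th-s-cv_CLneq0}.
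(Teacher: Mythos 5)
Your proposal is correct and follows essentially the same route as the paper: the error analysis is carried over verbatim from Theorem \ref{th-s-cv_CLneq0} with the $C_L$-terms dropped, the problem is reduced to bounding $\EE[\exp(ph\sum_j Y_j)]$ for some $p$ close to $1$, and a H\"older split separates the velocity and temperature sums so that Proposition \ref{exp_mom_schemeu} and Corollary \ref{cor_exp_mom_t} can be applied to each factor, with the final parameter balancing (choosing $\delta_1$ to equalize the two threshold constraints, $\tilde{\delta}_1\sim 1$, $\epsilon\sim 0$, $p\sim 1$) producing exactly the scalar conditions in (i) and (ii). This matches the paper's argument, including the observation that the coupling of the velocity moments to the temperature moments is already absorbed into the thresholds $\alpha_1$, $\tilde{\alpha}_1$ via the chain Corollary \ref{cor_exp_mom_t} $\Rightarrow$ Proposition \ref{exp_mom_schemeu}.
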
 
\begin{proof}
The proof is that of the beginning of Theorem \ref{th-s-cv_CLneq0} until the necessity to prove that under appropriate conditions, $\EE\big[ \exp \big( ph \sum_{j=1}^{N-1} Y_j\big) \big] <\infty$
for some $p\in (1,\infty)$, which can be chosen close to 1. 

$\bullet$ {\bf Deterministic initial conditions $u_0$ and $\theta_0$} 
Let again 
$\tilde{\delta}_1\sim 1$, $\delta_1 \in (0, \frac{1}{2})$,  $\epsilon \in (0, 1-2\delta_1) $ and $\delta_5=1-2\delta_1-\epsilon$.
  we use Corollary \ref{cor_exp_mom_t} for the temperature and Proposition \ref{exp_mom_schemeu} for the velocity. Using H\"older's inequality, \eqref{exp-moments-Aun}
  and \eqref{exp-moments-At_al_0}  we have to find exponents $p_1$ and
  $p_2$ such that for some $\delta_1 \in (0, \frac{1}{2})$, 
  \begin{equation}	\label{p1p2-C=0}
  \frac{1}{p_1} + \frac{1}{p_2} = \frac{1}{p} <1, \quad  \frac{ p_1 \bar{C}_4^4}{4\nu^2  \delta_1} < \alpha_1  , \quad \mbox{\rm and } \quad 
  \frac{p_2 \bar{C}_4^4 }{4\kappa } \Big[ \frac{1}{\nu( 1-\epsilon - 2\delta_1)}+ \frac{1}{\kappa}\Big] < \tilde{\beta}_0 . 
  \end{equation} 
  Thus we have to make sure that
  \[ \frac{\bar{C}_4^4}{4} 
  \Big[ \frac{1}{\nu^2 \delta_1 \alpha_1} +  \frac{1}{\kappa \tilde{\beta}_0}   \Big( \frac{1}{\nu(1-\epsilon -2\delta_1)}  + \frac{1}{\kappa} \Big) \Big] <1.\]
  Choose $\delta_1$ such that   $\frac{1}{\nu \delta_1 \alpha_1} = \frac{1}{\kappa \tilde{\beta}_0  (1-\epsilon - 2\delta_1)}$, that is 
 $\delta_1=\frac{\kappa \tilde{\beta}_0 (1-\epsilon)}{2\kappa \tilde{\beta}_0 + \nu \alpha_1} <\frac{1}{2}$. 
   With  this choice of $\delta_1$, we deduce that the constraint reads
  \[ \frac{\bar{C}_4^4}{4 \tilde{\beta}_0  \kappa} \Big[ \frac{2}{\nu} + \frac{1}{\kappa} \Big] + \frac{\bar{C}_4^4}{\nu^2 \alpha_1} <1, \]
  that is,  plugging in the values of $\alpha_1$ and $\tilde{\beta}_0$,
  \[ \frac{\bar{C}_4^4 {\rm Tr}(\QQ)}{\kappa \nu \tilde{\lambda}_1} \Big( \frac{\nu +2 \kappa}{\kappa^2}+\frac{8T}{\nu^2 \lambda_1 } \Big)
 + \frac{\bar{C}_4^4 {\rm Tr}(Q)}{\nu^3 \lambda_1} <1.
 \] 

$\bullet$ {\bf Random initial conditions} 
 Let $u_0$ and $\theta_0$ be random and satisfy \eqref{exp_mom_ut0L2},  let $\tilde{\alpha}_1$ be defined by \eqref{def_alpha01} in Proposition~\ref{exp_mom_schemeu}, 
   and  $\tilde{\beta}_1$  be defined by \eqref{def_beta12} in Corollary \ref{cor_exp_mom_t}. 
  With the same choice of $\lambda\sim1$ and $\epsilon\sim0$,  we have to find exponents $p_1$ and
  $p_2$ such that for some $\delta_1 \in (0, \frac{1}{2})$,  
  \begin{equation}	\label{p1p2-C=0_random}
  \frac{1}{p_1} + \frac{1}{p_2} <1, \quad  \frac{ p_1 \bar{C}_4^4}{4\nu^2  \delta_1} < \tilde{\alpha}_1  , \quad \mbox{\rm and } \quad 
  \frac{p_2 \bar{C}_4^4 }{4\kappa } \Big[ \frac{1}{\nu( 1 - 2\delta_1)}+ \frac{1}{\kappa}\Big] <   \tilde{\beta}_1 . 
  \end{equation} 
   Thus we have to make sure that
   \[ \frac{\bar{C}_4^4}{4} 
  \Big[ \frac{1}{\nu^2 \delta_1 \tilde{\alpha}_1} + \frac{1}{\kappa   \tilde{\beta}_1}   \Big( \frac{1}{\nu(1 -2\delta_1)}  + \frac{1}{\kappa} \Big) \Big] <1.\]
   Choose $\delta_1$ such that $\frac{1}{\nu \delta_1 \tilde{\alpha}_1} = \frac{1}{\kappa  \tilde{\beta}_1 (1 - 2\delta_1)}$, that is 
 $\delta_1=\frac{\kappa  \tilde{\beta}_1 }{2\kappa  \tilde{\beta}_1 + \nu \tilde{\alpha}_1} <\frac{1}{2}$. 
   With  this choice of $\kappa$, we deduce that the constraint reads
 \[ \frac{\bar{C}_4^4}{4 \kappa  \tilde{\beta}_1} \Big[ \frac{2}{\nu} + \frac{1}{\kappa} \Big] + \frac{\bar{C}_4^4}{\nu^2 \tilde{\alpha}_1} <1.
   \] 
  This completes the proof.
\end{proof}
\section{Appendix}
In this section, we slightly extend some results proved in \cite{BeMi_Bou} to a more general B\'enard model (that is imposing a different temperature on the sets $\{x_2=0\}$ and $\{x_2 =L\}$ as in
the rest of the paper) with a multiplicative noise. 
This will justify Proposition~\ref{prop_mom_t_H1}  stated in  Section \ref{gwp}.

Since  $\langle [u(s). \nabla] \theta(s),\AA \theta(s)\rangle \neq 0$, 
 unlike what happens for the velocity, we keep the bilinear term. 
This creates technical problems and we proceed in two steps. First, using the mild formulation of the weak solution $\theta$ of \eqref{def_t},
 we prove that the gradient of the temperature has finite moments. Then going back to the weak form, we prove the desired result localized on a set
 where $\| A^{\frac{1}{2}} u(t)\|_{V^0}$ is bounded by a constant.
 
Let  $\{S(t)\}_{t\geq 0}$ be the semi-group generated by $-\nu A$,  $\{\SS(t)\}_{t\geq 0}$ be the semi-group generated by $-\kappa \AA$, that
is $S(t)=\exp(-\nu t A)$ and $\SS(t)=\exp(-\kappa t\AA)$ for every $t\geq 0$. Note that  for every $\alpha >0$
\begin{align}
\| A^\alpha S(t) \|_{{\mathcal L}(V^0;V^0)} \leq C t^{-\alpha}, \quad\forall  t>0 	\label{AS}
 \\
\| A^{-\alpha} \big[ {\rm Id} - S(t)\big] \|_{{\mathcal L}(V^0;V^0)}
  \leq C t^{\alpha}, \quad \forall t>0.	\label{A(I-S)}
\end{align} 
 Similar upper estimates are valid when we replace $A$ by $\AA$ and $S(t)$ by $\SS(t)$.
 
Note that  if $u_0\in L^2(\Omega;V^1)$ and $\theta_0\in L^2(\Omega;H^0)$,  $u\in L^{2}(\Omega ; C([0,T];V^0)\cap L^\infty( [0,T] ; V^1))$ 
and 
$\theta \in  L^{2}(\Omega ; C([0,T];H^0))\cap 
L^2(\Omega\times [0,T] ; H^1)$, we can write the solutions of \eqref{def_u}--\eqref{def_t} in the following mild form
\begin{align}
u(t) = &\, S(t) u_0 - \int_0^t S(t-s) B(u(s), u(s))\,  ds + \int_0^t S(t-s) \big(\Pi \theta(t) v_2\big) \,  ds  \nonumber \\
&+ \int_0^t S(t-s) G(u(s)) dW(s), 	\label{weak_u}\\
\theta(t) = &\, \tilde{S}(t)  \theta_0 - \int_0^t \SS(t-s) \big( [u(s) . \nabla] \theta(s)\big) \, ds - \int_0^t \SS(t-s) u_2(s) ds  \nonumber \\
&+ \int_0^t \SS(t-s) \GG(\theta(s)) d\WW(s), 	\label{weak_t}
\end{align}
where the first equality holds a.s. in $V^0$ and the second one in $H^0$.

Indeed, since $\|A^\alpha u\|_{V^0}\leq C  \|A^{\frac{1}{2}}  u\|_{V^0}^{2\alpha} \|u(s)\|_{V^0}^{1-2\alpha}$,
 the upper estimate \eqref{GiMi-uv} for $\delta +\rho >\frac{1}{2}$, $\delta+\alpha+\rho=1$ and the Minkowski inequality imply for every $\epsilon >0$
 \begin{align*}
\Big\| \int_0^t S(t-s)& B(u(s),u(s)) ds \Big\|_{V^0 }  \leq  \, \int_0^t \| A^{\delta-\epsilon}  A^{-\delta} B(u(s),u(s)) \|_{V^0} ds \\
& \leq C  \int_0^t (t-s)^{-\delta +\epsilon} \|A^\alpha u(s)\|_{V^0} \|A^\rho u(s)\|_{V^0} ds \\
& \leq C \,  \sup_{s\in [0,t]} \| u(s)\|_{V^1}^{2} \int_0^t (t-s)^{-\delta +\epsilon}  ds.
 \end{align*}
Since $\|S(t)\|_{{\mathcal L}(V^0;V^0)} \leq 1$, it is easy to see that 
 \[ \Big\| \int_0^t S(t-s) \Pi \theta(t) v_2 ds\Big\|_{V^0} \leq C \int_0^t \|\theta(t)\|_{H^0} ds .\]
 Furthermore,
 \[ \EE\Big( \Big\| \int_0^t S(t-s) G(u(s)) dW(s) \Big\|_{V^0}^2 \Big) \leq {\rm Tr}(Q) \EE\Big( \int_0^t [K_0+K_1 \|u(t)\|_{V^0}^2\big] ds \Big) <\infty.
 \]
 Therefore, the stochastic integral  $\int_0^t S(t-s) G(u(s)) dW(s) \in V^0$ a.s., and the identity \eqref{weak_u} is true a.s. in $V^0$.
 
  A similar argument shows that \eqref{weak_t} holds a.s. in $H^{0}$. We only show that the convolution involving the bilinear term belongs to $H^0$.
  Using the Minkowski inequality and the upper estimate \eqref{GiMi-ut} with positive constants $\delta, \alpha, \rho $ such that 
  $\alpha, \rho\in (0,\frac{1}{2})$,   $\delta + \rho > \frac{1}{2}$ and $\delta + \alpha + \rho =1$, we obtain
  \begin{align*}
  \Big\|& \int_0^t \SS(t-s) [ (u(s). \nabla) \theta(s)] ds \Big\|_{H^0} \leq \int_0^t \| \AA^\delta  \SS(t-s) \; \AA^{-\delta} [ (u(s). \nabla) \theta(s)] \|_{H^0}\,  ds  \\
  &\leq C  \int_0^t (t-s)^{-\delta} \, \|A^\alpha u(s)\|_{V^0} \, \| \AA^\rho \theta(s)\|_{H^0} d\, s \\
  & \leq C  \sup_{s\in [0,t]} \|u(s)\|_{V^1}\sup_{s\in [0,t]} \|\theta(s)\|_{H^0}^{1-2\rho}  \Big( \int_0^t (t-s)^{-\frac{\delta }{1-\rho}} ds\Big)^{1-\rho}
 \Big(  \int_0^t \|\AA^{\frac{1}{2}} \theta(s)\|_{H^0}^2 ds \Big)^\rho <\infty,
  \end{align*}
  where the last upper estimate is deduced from H\"older's inequality and $\frac{\delta}{1-\rho}<1$.

The following result shows that for fixed $t$, the $L^2$-norm of the gradient of $\theta(t)$ has finite moments.
\begin{lemma}		\label{lem_sup_E_t}
Let  $p\in [0,+\infty)$, $u_0\in L^{4p + \epsilon}(\Omega;V^1)$ and $\theta_0 \in L^{4p+\epsilon}(\Omega;H^1)$ for some  $\epsilon \in (0,\frac{1}{2})$. Let 
the diffusion coefficient $G$ and $\GG$ satisfy the condition {\bf(C)} and {\bf ($\tilde{\rm \bf C}$)} respectively. 
 For every N, let  $\tilde{\tau}_N := \inf\{ t\geq 0 : \|\AA^{\frac{1}{2}} \theta(t)\|_{H^0} \geq N\} \wedge T$; then 
\begin{equation}		\label{sup_E_t}
\sup_{N>0} 
\sup_{t\in [0,T]} \EE\big(\| \AA^{\frac{1}{2}} \theta(t \wedge \tilde{\tau}_N
)\|_{H^0}^{2p}\big)  <\infty.
\end{equation} 
\end{lemma}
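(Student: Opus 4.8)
The plan is to work from the mild formulation \eqref{weak_t} of $\theta$, apply $\tilde A^{1/2}$ to both sides, take $H^0$-norms, raise to the power $2p$, and estimate the four resulting contributions, the whole time keeping the stopping time $\tilde\tau_N$ in force so that $\|\tilde A^{1/2}\theta(s\wedge\tilde\tau_N)\|_{H^0}$ never exceeds $N$ in the terms where we need a bound that is a priori only finite. Concretely, for $t\le\tilde\tau_N$ one has
\[
\tilde A^{1/2}\theta(t)=\tilde A^{1/2}\tilde S(t)\theta_0
-\int_0^t \tilde A^{1/2}\tilde S(t-s)\big([u(s)\!\cdot\!\nabla]\theta(s)\big)\,ds
-\int_0^t \tilde A^{1/2}\tilde S(t-s)u_2(s)\,ds
+\int_0^t \tilde A^{1/2}\tilde S(t-s)\tilde G(\theta(s))\,d\tilde W(s).
\]
The first term is bounded by $\|\tilde A^{1/2}\theta_0\|_{H^0}$ since $\|\tilde S(t)\|_{\mathcal L(H^0;H^0)}\le1$, and its $2p$-th moment is controlled by the hypothesis $\theta_0\in L^{4p+\epsilon}(\Omega;H^1)$ (in fact $L^{2p}$ suffices for this piece). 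The third, linear, term is estimated by $\int_0^t\|\tilde A^{1/2}\tilde S(t-s)\|_{\mathcal L(H^0;H^0)}\|u_2(s)\|_{H^0}\,ds$; using \eqref{AS} for $\tilde A$ this is $\le C\int_0^t(t-s)^{-1/2}\|u(s)\|_{V^0}\,ds\le C\,T^{1/2}\sup_{s\le T}\|u(s)\|_{V^0}$, whose $2p$-th moment is finite by \eqref{mom_ut_L2} (or \eqref{mom_u_V1}). The stochastic convolution is handled by the factorization method / a Burkholder-type inequality for stochastic convolutions: its $2p$-th moment is bounded, up to constants, by $\mathbb E\big(\int_0^t\|\tilde A^{1/2}\tilde S(t-s)\tilde G(\theta(s))\|_{\mathcal L_2}^2\,ds\big)^p$, and using the growth condition \eqref{growthGG-1} on $\tilde G$ together with $\|\tilde A^{1/2}\tilde S(t-s)\|\le C(t-s)^{-1/2}$ — or better, absorbing the $\tilde A^{1/2}$ into $\tilde G$ by \eqref{K0}-type trace assumptions — this reduces to $\mathbb E\big(\sup_{s\le T}(1+\|\theta(s)\|_{H^1}^2)\big)^p$ multiplied by a convergent time integral; one must be a little careful to split $\tilde A^{1/2}$ between the semigroup and $\tilde G$ so the time singularity stays integrable to the required power, which is exactly why the extra $\epsilon$ appears in the moment hypothesis.

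The genuinely delicate term is the bilinear convolution $I(t):=\int_0^t\tilde A^{1/2}\tilde S(t-s)\big([u(s)\!\cdot\!\nabla]\theta(s)\big)\,ds$. Here I would write $\tilde A^{1/2}\tilde S(t-s)=\tilde A^{1/2+\delta}\tilde S(t-s)\cdot\tilde A^{-\delta}$, use \eqref{AS} to get $\|\tilde A^{1/2+\delta}\tilde S(t-s)\|_{\mathcal L(H^0;H^0)}\le C(t-s)^{-(1/2+\delta)}$, and then invoke \eqref{GiMi-ut} with exponents $\alpha,\rho>0$, $\delta\in[0,1)$ chosen so that $\delta+\rho>\tfrac12$ and $\alpha+\delta+\rho\ge1$ — take for instance $\rho=\tfrac12$, $\alpha$ small, $\delta=\tfrac12-\alpha$ — to obtain $\|\tilde A^{-\delta}[(u(s)\!\cdot\!\nabla)\theta(s)]\|_{H^0}\le C\|A^\alpha u(s)\|_{V^0}\|\tilde A^\rho\theta(s)\|_{H^0}$. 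Since the exponent on $(t-s)$ is then $-(1-\alpha)>-1$, the time integral $\int_0^t(t-s)^{-(1-\alpha)}\,ds$ is finite, and Hölder's inequality in $s$ gives
\[
\|I(t)\|_{H^0}\le C\Big(\int_0^t (t-s)^{-\frac{1-\alpha}{1-\rho'}}ds\Big)^{1-\rho'}\sup_{s\le t}\|u(s)\|_{V^1}\,\Big(\int_0^t\|\tilde A^{1/2}\theta(s\wedge\tilde\tau_N)\|_{H^0}^2\,ds\Big)^{\rho'}
\]
for a suitable $\rho'\in(0,1)$ with $\tfrac{1-\alpha}{1-\rho'}<1$; the point is that on $\{s\le\tilde\tau_N\}$ the last integral is $\le T N^2$, so $I(t)$ is a.s. finite, and its $2p$-th moment is controlled by $\mathbb E\big(\sup_{s\le T}\|u(s)\|_{V^1}^{2p}\big)^{?}$ times powers of $N$ — but crucially the power of $N$ multiplies a constant only, and after combining everything we get a bound that is \emph{finite} for each $N$ (that is all \eqref{sup_E_t} claims: a supremum over $t$, for each fixed $N$, of a finite quantity). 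If one instead wants the bound uniform in $N$ one cannot afford the $N^2$, and must instead use Hölder with $\rho'$ paired against the energy estimate \eqref{mom_ut_L2} for $\int_0^T\|\tilde A^{1/2}\theta\|_{H^0}^2\,ds$ so that the $\tilde\tau_N$ truncation is only needed to justify finiteness, not for the actual bound; this is the reading consistent with the statement $\sup_{N>0}\sup_{t}$, and requires the higher moment $4p+\epsilon$ on $u_0$ because $\sup\|u\|_{V^1}$ is paired against a power of a time integral of $\|\tilde A^{1/2}\theta\|^2$, and then Hölder in $\Omega$ costs an extra factor.

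So the ordered steps are: (1) write \eqref{weak_t}, apply $\tilde A^{1/2}$, and split into four terms $J_1,\dots,J_4$; (2) bound $J_1$ by $\|\tilde A^{1/2}\theta_0\|_{H^0}$ and $J_3$ by $C T^{1/2}\sup_{s\le T}\|u(s)\|_{V^0}$, each with finite $2p$-th moment via \eqref{mom_ut_L2}; (3) bound the stochastic convolution $J_4$ by a Burkholder/factorization estimate, using \eqref{growthGG-1} and the fractional-semigroup bound \eqref{AS}, getting a constant times $\mathbb E(1+\sup_{s\le T}\|\theta(s)\|_{H^1}^{2})^p$ — wait, that is circular, so one must instead use the \emph{energy} bound $\mathbb E\int_0^T\|\tilde A^{1/2}\theta\|^2<\infty$ from \eqref{mom_ut_L2} together with splitting $\tilde A^{1/2}=\tilde A^{1/2-\e'}\tilde A^{\e'}$ so the trace condition plus a sub-singular kernel closes it; (4) bound the bilinear term $J_2$ via \eqref{GiMi-ut} and \eqref{AS} as above, Hölder in $s$, and then Hölder in $\Omega$ pairing $\sup\|u\|_{V^1}$ (moment bounded by \eqref{mom_u_V1}, needing $\sim 4p+\epsilon$ on $u_0$) against a power of $\int_0^T\|\tilde A^{1/2}\theta(s\wedge\tilde\tau_N)\|^2 ds$ (moment bounded using \eqref{mom_ut_L2}, needing $\sim 4p+\epsilon$ on $\theta_0$); (5) collect the four bounds, take $\sup_{t\in[0,T]}$ and then $\sup_{N>0}$. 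The main obstacle is precisely step (4): making the Hölder split in $s$ and in $\Omega$ work simultaneously so that every time-integral exponent stays below $1$, every $\Omega$-moment stays below the hypothesized $4p+\epsilon$, and the dependence on $N$ drops out — this is the accounting that forces the exact exponents in the statement and is the only place where the argument genuinely departs from the Navier–Stokes case treated in \cite{BeMi_Bou}, because there $\langle[u\!\cdot\!\nabla]\theta,\tilde A\theta\rangle$ would vanish and no such term would appear.
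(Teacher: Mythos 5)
Your overall strategy (mild formulation \eqref{weak_t}, apply $\tilde{A}^{1/2}$, split into four terms, treat the initial-data term and the linear term directly) matches the paper's, and your treatment of those two easy terms is fine. But there is a genuine gap at the heart of the argument: for both the bilinear convolution and the stochastic convolution you try to close the estimate in a single application of H\"older's inequality, you correctly notice that this is either circular or leaves a factor $N^{2}$ from the truncation, and your proposed repair does not work. Pairing the bilinear term against $\big(\int_0^T\|\tilde{A}^{1/2}\theta(s)\|_{H^0}^2\,ds\big)^{\rho'}$ and invoking \eqref{mom_ut_L2} fails because, after raising to the power $2p$ and applying H\"older in $\Omega$ against $\sup_s\|u(s)\|_{V^1}$, you would need moments of order $q>1$ of $\int_0^T\|\tilde{A}^{1/2}\theta(s)\|_{H^0}^2\,ds$, and \eqref{mom_ut_L2} only provides the first moment (the weighted integrand $\|\tilde{A}^{1/2}\theta\|^2\big(1+\|\theta\|_{H^0}^{2(p-1)}\big)$ is not $\big(\int\|\tilde{A}^{1/2}\theta\|^2\big)^p$). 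Such higher moments of the $H^1$-norm of $\theta$ are essentially what Proposition \ref{prop_mom_t_H1} is meant to establish, and that proposition is proved \emph{using} the present lemma, so you cannot assume them here.

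The missing idea is the singular Gronwall lemma (Lemma \ref{Gronwall}) applied to the bounded function $\varphi(t)=\EE\big(\|\tilde{A}^{1/2}\theta(t\wedge\tilde{\tau}_N)\|_{H^0}^{2p}\big)$ (bounded precisely because of the stopping time). To set it up, the paper estimates the bilinear term with \eqref{GiMi-ut} taking $\alpha=\tfrac12$, $\delta=\tfrac12-\beta$ and $\rho$ \emph{small} (not $\rho=\tfrac12$ as you propose), then interpolates $\|\tilde{A}^{\rho}\theta\|_{H^0}\le\|\tilde{A}^{1/2}\theta\|_{H^0}^{2\rho}\|\theta\|_{H^0}^{1-2\rho}$ so that $\|\tilde{A}^{1/2}\theta(s)\|_{H^0}$ enters the time integral only to the small power $2\rho$. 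After H\"older with respect to the finite measure $(t-s)^{-1+\beta}\,ds$ and Young's inequality with exponents $p_1=\tfrac{2(1-\rho)}{1-2\rho}$, $p_2=\tfrac{2(1-\rho)}{(1-2\rho)^2}$, $p_3=\tfrac1{2\rho}$, the contribution splits into $\sup_s\|A^{1/2}u(s)\|_{V^0}^{4p+\epsilon}+\sup_s\|\theta(s)\|_{H^0}^{4p+\epsilon}$ (which is exactly where the hypothesis $4p+\epsilon$ comes from, letting $\rho\downarrow0$) plus a term that is \emph{linear} in $\|\tilde{A}^{1/2}\theta(s)\|_{H^0}^{2p}$ under the integrable singular kernel $(t-s)^{-1+\beta}$. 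The stochastic convolution is handled the same way: Burkholder and \eqref{growthGG-1} leave a term $\int_0^t\EE\big(\|\tilde{A}^{1/2}\theta(s\wedge\tilde{\tau}_N)\|_{H^0}^{2p}\big)\,ds$, which is not circular because it too is absorbed by Gronwall. The resulting inequality $\varphi(t)\le a+\int_0^t\big[b+c(t-s)^{-1+\beta}\big]\varphi(s)\,ds$ with $a,b,c$ independent of $N$ then yields the claimed bound uniformly in $N$. Without this Gronwall step your argument cannot produce a bound that is simultaneously finite and independent of $N$.
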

\begin{proof}
Write $\theta(t)$ using \eqref{weak_t}; then $\|\AA^{\frac{1}{2}} \theta(t)\|_{H^0} \leq  \sum_{i=1}^4  T_i(t)$,  where
\begin{align*}
 T_1(t)&\, =\| \AA^{\frac{1}{2}} \SS(t) \theta_0\|_{H^0}, \quad  T_2(t)\, =\Big\| \int_0^t \AA^{\frac{1}{2}} \SS(t-s) [ (u(s).\nabla) \theta(s)] ds\Big\|_{H^0}, \\
 T_3(t)&  \,=\Big \| \int_0^t \AA^{\frac{1}{2}}  \SS(t-s) u_2(s) ds\Big\|_{H^0} ,
\quad T_4(t)\, = \Big\| \int_0^t \AA^{\frac{1}{2}} \SS(t-s) \GG(\theta(s)) d\WW(s)\Big\|_{H^0}. 
\end{align*}
The Minkowski inequality implies for $\beta \in (0,\frac{1}{2})$ 
\begin{align*}
 T_2(t)  \leq&\, \int_0^t \| \AA^{\frac{1}{2}} \SS(t-s) [(u(s) . \nabla) \theta(s)] \|_{H^0} ds\\
 & \leq  \int_0^t \|\AA^{1-\beta} \SS(t-s)\|_{{\mathcal L}(H^0;H^0)} \| \AA^{-(\frac{1}{2}-\beta)} [ (u(s).\nabla)\theta(s)\|_{H^0} ds. 
\end{align*}
Apply \eqref{GiMi-ut} with $\delta = \frac{1}{2}-\beta$, $\alpha = \frac{1}{2}$ and $\rho \in (\beta, \frac{1}{2})$. A simple computation
proves that $\|\AA^\rho f\|_{H^0} \leq \|\AA^{\frac{1}{2}} f\|_{H^0}^{2\rho} \|f\|_{H^0}^{1-2\rho}$ for any $f\in H^1$.  Therefore, 
\begin{align*} 
 \| \AA^{-(\frac{1}{2}-\beta)} [(u(s).\nabla)\theta(s)]\|_{H^0} &\leq C \|A^{\frac{1}{2}} u(s)\|_{V^0} \|\AA^\rho \theta(s)\|_{H^0}\\
 & \leq  C \|A^{\frac{1}{2}} u(s)\|_{V^0} 
\| \AA^{\frac{1}{2}} \theta(s) \|_{H^0}^{2\rho} \|\theta(s)\|_{H^0}^{1-2\rho}.
\end{align*}   
This upper estimate and \eqref{AS} imply
\[ T_2(t) \leq C \sup_{s\in [0,T]} \|A^{\frac{1}{2}} u(s)\|_{H^0}  \sup_{s\in [0,t]} \|\theta(s)\|_{H^0}^{1-2\rho} 
\int_0^t (t-s)^{-1+\beta} \|\AA^{\frac{1}{2}} 
\theta(s)\|_{H^0}^{2\rho} ds.\]
For any $p\in [1,\infty)$, using H\"older's inequality  with respect to the finite measure \linebreak  $(t-s)^{-(1-\beta)} 1_{[0,t)}(s) ds$  we obtain 
\begin{align*}
 T_2(t)^{2p} &\, \leq C \sup_{s\in [0,t]} \|A^{\frac{1}{2}} u(s)\|_{V^0}^{2p} \sup_{s\in [0,t]} \|\theta(s)\|_{H^0}^{2p(1-2\rho)} 
 \Big( \int_0^t (t-s)^{-(1-\beta)} ds\Big)^{2p-1}\\
 &\quad \times 
\Big( \int_0^t (t-s)^{-(1-\beta)} \|\AA^{\frac{1}{2}} \theta(s)\|_{H^0}^{4p\rho} ds \Big).
\end{align*}
Let $p_1=\frac{2(1-\rho)}{1-2\rho}$, $p_2=\frac{2(1-\rho)}{(1-2\rho)^2}$ and $p_3=\frac{1}{2\rho}$. Then $\frac{1}{p_1}+\frac{1}{p_2} + \frac{1}{p_3}=1$,
$4\rho p p_3=2p$ and $p p_1 = p(1-2\rho) p_2 := \tilde{p}$. Young's and H\"older's inequalities imply
\begin{align*}
 T_2(t)^{2p}   \leq &\, C \Big[ \frac{1}{p_1} \sup_{s\in [0,t]} \|A^{\frac{1}{2}} u(s)\|_{V^0}^{2\tilde{p}} 
 + \frac{1}{p_2} \sup_{s\in [0,t]} \|\theta(s)\|_{H^0}^{2\tilde{p}} \\
 &\quad 
+ \frac{1}{p_3} \Big( \int_0^t (t-s)^{-1+\beta} \|\AA^{\frac{1}{2}} \theta(s)\|_{H^0}^{2p} ds\Big) \Big( \int_0^t (t-s)^{-1+\beta} ds\Big)^{p_3-1} \Big] .
\end{align*}
Note that the continuous function $\rho\in (0,\frac{1}{2}) \mapsto  \frac{2(1-\rho)}{1-2\rho}$ is increasing with  $\lim_{\rho\to 0}  \frac{2(1-\rho)}{1-2\rho} =2$. 
Given $\epsilon>0$ choose 
$\rho \in (0,\frac{1}{2})$ close enough to 0 to have $2\tilde{p}=2p \frac{2(1-\rho)}{1-2\rho} = 4 p+\epsilon$, then choose $\beta \in (0,\rho)$. The above
computations yield
\begin{align} 	\label{T_2-convol}
T_2(t)^{2p} \leq &\; C\Big[ \sup_{s\in [0,t]} \|A^{\frac{1}{2}} u(s)\|_{V^0}^{4p+\epsilon} + \sup_{s\in [0,t]} \|\theta(s)\|_{H^0}^{4p+\epsilon} \Big] \nonumber \\
&\; + C \int_0^t (t-s)^{-1+\beta} \|\AA^{\frac{1}{2}} \theta(s)\|_{H^0}^{2p} ds.
\end{align}
The Minkowski inequality and \eqref{AS} for the pair $\AA$ and $\SS$ imply 
\begin{align*}	
T_3(t) \leq &\, \int_0^t \| \AA^{\frac{1}{2}} \SS(t-s) u_2(s)\|_{H^0} ds \leq C \sup_{s\in [0,T]} \|A^{\frac{1}{2}} u(s)\|_{V^0} \int_0^t (t-s)^{-\frac{1}{2}} ds. 
\end{align*} 
Thus for any $p\in [1,\infty)$
\begin{equation} \label{T3_convol}
T_3(t)^{2p} \leq C(T,p) \|A^{\frac{1}{2}} u(s)\|_{V^0}^{2p}. 
\end{equation}

Finally, Burhholder's inequality, the growth condition \eqref{growthGG-1}  and H\"older's inequality imply that  for $t\in [0,T]$ 
\begin{align}	\label{mom-convol}
\EE \Big( \Big\| \int_0^t &\AA^{\frac{1}{2}} \SS(t-s) \GG(\theta(s)) d\WW(s) \Big\|_{H^0}^{2p} \Big) 
\leq \, C_p \big( {\rm Tr}(Q)\big)^p \EE\Big( \Big|
\int_0^t \|\AA^{\frac{1}{2}} \GG(\theta(s))\|_{{\mathcal L}(\KK;H^0)}^2 ds \Big|^p\Big)  \nonumber \\
\leq &\, C_p \big( {\rm Tr}(Q)\big)^p\, \EE\Big( \Big| \int_0^t [\tilde{K}_2 + \tilde{K}_3 \|\theta(s)\|_{H^0}^2 
 + \tilde{K}_3 \| \AA^{\frac{1}{2}} \theta(s)\|_{H^0}^2] ds \Big|^p \Big)
\nonumber \\
\leq &\, C(p, \tilde{K}_2, \tilde{K}_3, {\rm Tr}(Q))  T^p \Big[ 1+ \EE\Big( \sup_{s\in [0,t]} \|\theta(s)\|_{H^0}^{2p} \Big]  \nonumber \\
& \quad + C_p \big( {\rm Tr}(Q)\big)^p \tilde{K_3}^p T^{p-1} 
\int_0^t \EE\big( \|\AA^{\frac{1}{2}} \theta(s)\|_{H^0}^{2p}  \big) ds.
\end{align} 
The upper estimates 
\eqref{T_2-convol}--\eqref{mom-convol} and $T_1(t) \leq \|A^{\frac{1}{2}} \theta_0\|_{H^0} \leq \|\theta_0\|_{H^1}$ used with $t\wedge \tilde{\tau}_N$
instead of $t$ imply for every $t\in [0,T]$ 
\begin{align*}
 \EE\big( \|\AA^{\frac{1}{2}} \theta(t\wedge \tilde{\tau}_N)\|_{H^0}^{2p} \big) \leq&\,  C_p
\Big[1+  \EE\Big(\| \AA^{\frac{1}{2}} \theta_0\|_{H^0}^{2p} +  \sup_{s\in [0,T]} \|A^{\frac{1}{2}} u(s)\|_{V^0}^{4p+\epsilon} +
\sup_{s\in [0,T]} \|\theta(s)\|_{H^0}^{4p+\epsilon} \Big)\Big]\\
 &\, + C_p \int_0^t  \big[ (t-s)^{-1+\beta}  + \tilde{K}_3 T^{p-1} \big] 
 \EE\big( \| \AA^{\frac{1}{2}} \theta(s\wedge \tilde{\tau}_N)\|_{H^0}^{2p} \big) ds, 
\end{align*}
where the constant $C_p$ does not depend on $t$ and $N$. 
Theorem \ref{th-gwp}, Proposition \ref{prop_u_V1}, and the version of Gronwall's lemma proved in the following lemma \ref{Gronwall} imply
\eqref{sup_E_t} 
for some constant $C$ depending on $\EE(\|u_0\|_{V^1}^{4p+\epsilon})$ and $\EE(\|\theta_0\|_{H^0}^{4p+\epsilon})$.
 The proof of the Lemma 
is complete.
\end{proof}

The following lemma is an extension of Lemma 3.3, p.~316  in \cite{Walsh}.
 Its proof is given in \cite[Lemma~3]{BeMi_Bou}. 
\begin{lemma}		\label{Gronwall}
Let $\epsilon \in (0,1)$, $a,b,c$ be positive constants and $\varphi$ be a bounded non negative function such that 
\begin{equation}  	\label{maj_Gron}
\varphi(t) \leq a+\int_0^t \big[ b+c(t-s)^{-1+\epsilon}\big] \,\varphi(s)\, ds, \quad \forall t\in [0,T].
\end{equation}
Then $\sup_{t\in [0,T]} \varphi(t)\leq C(a,b,c,T,\epsilon)<\infty$.
\end{lemma}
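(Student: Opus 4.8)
The plan is to iterate the singular integral inequality \eqref{maj_Gron} a fixed finite number of times until the iterated kernel becomes bounded on $[0,T]$, and then to invoke the classical (nonsingular) Gronwall lemma. This is the standard route for Henry-type inequalities and follows the argument of \cite[Lemma~3]{BeMi_Bou}.

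First I would reduce the kernel to a pure power. Since $b\le bT^{1-\epsilon}(t-s)^{\epsilon-1}$ for $0\le s<t\le T$, we have
\[ b+c(t-s)^{-1+\epsilon}\le C_1\,(t-s)^{\epsilon-1},\qquad C_1:=c+bT^{1-\epsilon},\]
so it suffices to treat the inequality with kernel $k(r):=C_1 r^{\epsilon-1}$. Writing $(f*g)(t)=\int_0^t f(t-s)g(s)\,ds$ for the convolution on $[0,t]$ and $k^{*n}$ for the $n$-fold convolution power, the Beta-integral identity $\int_0^t(t-s)^{\alpha-1}s^{\beta-1}\,ds=\frac{\Gamma(\alpha)\Gamma(\beta)}{\Gamma(\alpha+\beta)}t^{\alpha+\beta-1}$ yields, by induction on $n$,
\[ k^{*n}(r)=C_1^{n}\,\frac{\Gamma(\epsilon)^n}{\Gamma(n\epsilon)}\,r^{n\epsilon-1},\qquad r>0.\]
Choosing $n:=\lceil 1/\epsilon\rceil$ gives $n\epsilon\ge 1$, hence $k^{*n}$ is bounded on $[0,T]$ by $K_n:=C_1^{n}\Gamma(\epsilon)^n\Gamma(n\epsilon)^{-1}T^{n\epsilon-1}<\infty$, and each $(k^{*j}*1)(t)=\int_0^t k^{*j}(r)\,dr$ is bounded on $[0,T]$ by a finite constant $M_j$ depending only on $b,c,T,\epsilon$ (with the convention $k^{*0}*1\equiv 1$, i.e. $M_0=1$).

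Next I would iterate \eqref{maj_Gron}: substituting the bound for $\varphi(s)$ inside the convolution term and using Fubini's theorem (legitimate since all integrands are nonnegative and $\varphi$ is bounded, so each $k^{*j}*\varphi$ is finite, and this also gives associativity of the truncated convolution) one obtains, for every $t\in[0,T]$,
\[ \varphi(t)\le a\sum_{j=0}^{n-1}(k^{*j}*1)(t)+(k^{*n}*\varphi)(t)\le A_n+K_n\int_0^t\varphi(s)\,ds,\qquad A_n:=a\sum_{j=0}^{n-1}M_j<\infty,\]
where $A_n$ depends only on $a,b,c,T,\epsilon$. The singular part of the kernel has now been absorbed, and the classical Gronwall lemma applied to this last inequality gives $\sup_{t\in[0,T]}\varphi(t)\le A_n e^{K_n T}=:C(a,b,c,T,\epsilon)$, which is the claim, and the constant manifestly does not depend on $\varphi$ itself.

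I do not expect a genuine obstacle here: the only point requiring care is the bookkeeping in the iteration — verifying the Gamma-function formula for the convolution powers of $r^{\epsilon-1}$ and justifying the interchange of integrals — but the a priori boundedness of $\varphi$ built into the hypothesis makes both steps routine, and no localization or probabilistic input is needed.
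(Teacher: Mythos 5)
Your proof is correct. The paper itself does not prove this lemma --- it only points to \cite[Lemma~3]{BeMi_Bou} (itself an extension of Walsh's Lemma~3.3) --- and your argument is exactly the standard one used there: majorize the kernel by $C_1(t-s)^{\epsilon-1}$, iterate the inequality $n=\lceil 1/\epsilon\rceil$ times using the Beta-integral identity for the convolution powers so that the iterated kernel becomes bounded, and finish with the classical Gronwall lemma; the a priori boundedness of $\varphi$ justifies the Fubini/associativity steps. Nothing further is needed.
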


\bigskip

\begin{proof}[Proof of Proposition \ref{prop_mom_t_H1}]
We next prove that 
the gradient of the temperature
 has bounded moments uniformly in time. 

Applying the operator $\AA^{\frac{1}{2}}$ to Equation \eqref{def_t}, and writing It\^o's formula for the square of the corresponding $H^0$-norm,
we obtain 
\begin{align*}		
\| \AA^{\frac{1}{2}} \theta(t) &\|_{H^0}^2 +2\kappa \int_0^t \|\AA \theta(s)\|_{H^0}^2 ds = \| \AA^{\frac{1}{2}} \theta_0\|_{H^0}^2 
- 2 \int_0^t \langle (u(s).\nabla) \theta(s), \AA\theta(s) \rangle ds  \\
& + 2 \int_0^t \big( \AA^{\frac{1}{2}} \GG(\theta(s)) d\WW(s) , \AA^{\frac{1}{2}} \theta(s)\big)
+ {\rm Tr}(Q) \int_0^t \| \AA^{\frac{1}{2}} \GG(\theta(s))\|_{H^0}^2 ds.
\end{align*}
Then, apply It\^o's formula for the map $x\mapsto x^p$. This yields, using integration by parts, 
\begin{align} \label{Ito-t-H1}
\| \AA^{\frac{1}{2}}& \theta(t) \|_{H^0}^{2p} + 2p\kappa \int_0^t \|\AA \theta(s)\|_{H^0}^2 \|\AA^{\frac{1}{2}} \theta(s)\|_{H^0}^{2(p-1)} ds
= \| \AA^{\frac{1}{2}} \theta_0\|_{H^0}^{2p}  \nonumber \\
&- 2p \int_0^t \langle (u(s).\nabla) \theta(s), \AA\theta(s) \rangle \| \AA^{\frac{1}{2}} \theta(s)\|_{H^0}^{2(p-1)} ds  \nonumber \\
& + 2p \int_0^t \big( \AA^{\frac{1}{2}} \GG(\theta(s)) d\WW(s) , \AA^{\frac{1}{2}} \theta(s)\big)  \| \AA^{\frac{1}{2}} \theta(s)\|_{H^0}^{2(p-1)} \nonumber \\
&+ p {\rm Tr}(\QQ) \int_0^t \| \AA^{\frac{1}{2}} \GG(\theta(s))\|_{H^0}^2  \| \AA^{\frac{1}{2}} \theta(s)\|_{H^0}^{2(p-1)} ds\nonumber \\
&+ 2p(p-1) {\rm Tr}(\QQ)\int_0^{t} \| \big(\AA^{\frac{1}{2}} \GG\big)^*(\theta(s)) \big(\AA^{\frac{1}{2}} \theta(s)\big)\|_K^2 
\| \AA^{\frac{1}{2}} \theta(s)\|_{H^0}^{2(p-2)} ds.
\end{align}
The Gagliardo--Nirenberg inequality \eqref{GagNir} 
and the inclusion $V^1\subset \LL^4$ implies that 
\begin{align*}
  \int_0^t \big|  \langle (u(s).\nabla) &\theta(s) , \tilde{A} \theta(s)\rangle \big| \,  \|\AA^{\frac{1}{2}} \theta(s)\|_{H^0}^{2(p-1)}  ds  \\
  & \leq  C  \int_0^t \|\tilde{A} \theta(s)\|_{H^0}  \|u(s)\|_{\LL^4} \|\AA^{\frac{1}{2}} \theta(s)\|_{L^4}  \|\AA^{\frac{1}{2}} \theta(s)\|_{H^0}^{2(p-1)}  ds  \\
 &\leq  C  \int_0^t \|\tilde{A} \theta(s)\|_{H^0}^{\frac{3}{2}}   \|u(s)\|_{V^1}  \|\AA^{\frac{1}{2}} \theta(s)\|_{H^0}^{2p-\frac{3}{2}}  ds .
\end{align*}
Then, using the H\"older and Young's inequalities, we deduce
\begin{align}		\label{bilin-ut}
2p \int_0^{t} 
  &\big| \langle (u(s).\nabla) \theta(s), \AA\theta(s) \rangle \big| 
\| \AA^{\frac{1}{2}} \theta(s)\|_{H^0}^{2(p-1)} ds  	\nonumber \\
\leq & \; (2p-1)\, \kappa  \int_0^{t} 
 \|\AA(\theta(s))\|_{H^0}^{2} \| \AA^{\frac{1}{2}} \theta(s)\|_{H^0}^{2(p-1)} ds  \nonumber \\
 &\; +   C(\kappa,p) \sup_{s\in [0,T]}  \|u(s)\|_{V^1}^4 
 \int_0^t 
 \| \AA^{\frac{1}{2}} \theta(s)\|_{H^0}^{2p} ds .
\end{align}
The growth condition \eqref{growthGG-1} and H\"older's and Young inequalities imply that
\begin{equation}		\label{maj_trace1}
 \int_0^{t} 
  \|\AA^{\frac{1}{2}} \GG(\theta(s)) \|_{H^0}^2 \|\AA^{\frac{1}{2}} \theta(s)\|_{H^0}^{2(p-1)} ds 
\leq C \int_0^{t} 
\big[ 1+ \|\theta(s)\|_{H^0}^{2p} +\|\AA^{\frac{1}{2}} \theta(s)\|_{H^0}^{2p} \big] ds,
\end{equation}
and a similar computation yields 
\begin{align}		\label{maj_trace2}
 \int_0^{t} 
  \big\| \big(\AA^{\frac{1}{2}} &\, \GG\big)^*(\theta(s)) \big(\AA^{\frac{1}{2}} \theta(s)\big)\|_K^2 
\| \AA^{\frac{1}{2}} \theta(s)\|_{H^0}^{2(p-2)} ds \nonumber \\
& \leq 
C \int_0^{t} 
\big[ 1+ \|\theta(s)\|_{H^0}^{2p} +\|\AA^{\frac{1}{2}} \theta(s)\|_{H^0}^{2p} \big] ds.
\end{align}
 
 Let $\tilde{\tau}_N:=\inf\{ t\geq 0: \|\AA^{\frac{1}{2}} \theta(t)\|_{H^0} \geq N\}$. The upper estimates \eqref{Ito-t-H1}--\eqref{maj_trace2}
  written for $t\wedge \tilde{\tau}_N$ instead of $t$ imply  
  
\begin{align*}
\sup_{t\in [0,T]} &\|\AA^{\frac{1}{2}} \theta(t\wedge \tilde{\tau}_N) 
 \|_{H^0}^{2p}  + \kappa \int_0^{T\wedge \tilde{\tau}_N} \| \AA \theta(s)\|_{H^0}^2 \| \AA^{\frac{1}{2}} \theta(s)\|_{H^0}^{2(p-1)} ds 
  \leq \|\AA^{\frac{1}{2}} \theta_0\|_{H^0}^{2p}\\
&  
+ C \sup_{s\in [0,T]} \! \|u(s)\|_{V^1}^4 \!\! \int_0^{T\wedge \tilde{\tau_N}} \! \| \AA^{\frac{1}{2}} \theta(s)\|_{H_0}^{2p} ds  
+C \int_0^{T\wedge \tilde{\tau}_N} \!\! \big( 1+\|\theta(s)\|_{H^0}^{2p} + \| \AA^{\frac{1}{2}} \theta(s)\|_{H^0}^{2p}  \big)ds \\
&+ 2p \,
\sup_{t\in [0,T]} \int_0^{t\wedge \tilde{\tau}_N} 
 \big(
 \AA^{\frac{1}{2}} \GG(\theta(s)) d\WW(s), 
\AA^{\frac{1}{2}} \theta(s) \big) 
\| \AA^{\frac{1}{2}} \theta(s)\|_{H^0}^{2(p-1)}.
\end{align*}
Using the Cauchy--Schwarz inequality, Fubini's theorem, \eqref{mom_u_V1}  and \eqref{sup_E_t}, we deduce 
\begin{align}		\label{maj_u_int-theta}
\EE\Big( \sup_{s\in [0,T]}& \| u(s)\|_{V^1}^4  \int_0^{T\wedge\tilde{\tau}_N}  \| \AA^{\frac{1}{2}} \theta(s)\|_{H^0}^{2p} ds \Big) \nonumber \\
&  \leq 
\Big\{ \EE\Big( \sup_{s\in [0,T]} \| u(s)\|_{V^1}^8  \Big) \Big\}^{\frac{1}{2}} 
\Big\{ \int_0^T \EE\big( \|\AA^{\frac{1}{2}} \theta(s\wedge \tilde{\tau}_N)\|_{H^0}^{4p} \big) ds \Big\}^{\frac{1}{2}} \leq C.
\end{align} 

The Davis inequality, the growth condition \eqref{growthGG-1} and the Cauchy--Schwarz, Young and H\"older inequalities imply that
%
\begin{align*}
\EE \Big(& \sup_{t\in [0,T]} 
\Big|  \int_0^{t\wedge \tilde{\tau}_N} \big( 
\AA^{\frac{1}{2}} \GG(\theta(s)) d\WW(s), 
\AA^{\frac{1}{2}} \theta(s) \big)  \| \AA^{\frac{1}{2}} \theta(s)\|_{H^0}^{2(p-1)} \Big) 
\\
& \leq C \, \EE\Big( \Big\{ \int_0^{T} 
 {\rm Tr}(\QQ)
 \big[\tilde{K}_2  + \tilde{K}_3 \|\theta(s\wedge \tilde{\tau}_N)\|_{H^1}^2   \big] 
\|\AA^{\frac{1}{2}} \theta(s\wedge \tilde{\tau}_N) \|_{H^0}^{4p-2} ds \Big\}^{\frac{1}{2}} \Big)  \\
&\leq  C \, \EE\Big[ \big( \sup_{s\leq T} \big( 
 \|\AA^{\frac{1}{2}} \theta(s\wedge \tilde{\tau}_N)
\|_{H^0}^p\big)   ({\rm Tr}(\QQ))^{\frac{1}{2}}  \\
&\quad \times 
 \Big\{\int_0^T \big[ \tilde{K}_2  + \tilde{K}_3 \|\theta(s\wedge \tilde{\tau}_N)\|_{H^0}^2  + \tilde{K}_3
  \|\AA^{\frac{1}{2}} \theta(s\wedge \tilde{\tau}_N)\|_{H^0}^2 \big] 
\| \AA^{\frac{1}{2}} \theta(s\wedge \tilde{\tau}_N) 
\|_{H^0}^{2(p-1)} ds \Big\}^{\frac{1}{2}} \Big)\\
&\leq \frac{1}{4p} \EE\Big( \sup_{s\leq T} \big( 
\|\AA^{\frac{1}{2}}   \theta(s\wedge\tilde{\tau}_N) 
\|_{H^0}^{2p}\Big) \\
&\qquad + C \EE\Big( \int_0^T \big[1+\| \theta(s\wedge\tilde{\tau}_N)\|_{H^0}^{2p} 
+ \| \AA^{\frac{1}{2}} \theta(s\wedge \tilde{\tau}_N)\|_{H^0}^{2p}  \big] ds\Big). 
\end{align*}
Therefore, the upper estimates \eqref{mom_ut_L2}, \eqref{sup_E_t} and \eqref{maj_u_int-theta} imply that
\[ 
\frac{1}{2}  \EE\Big( \sup_{s\leq T}  \|\AA^{\frac{1}{2}} \theta(s\wedge\tilde{\tau}_N)\|_{H^0}^{2p} \Big) 
+  \kappa\; \EE\Big(  \int_0^{T\wedge \tilde{\tau}_N} \| \AA \theta(s\wedge \tilde{\tau}_N)\|_{H^0}^2 \| \AA^{\frac{1}{2}} \theta(s)\|_{H^0}^{2(p-1)} ds \Big)
 \leq C
\] 
for some constant $C$ independent of $N$. 
 As $N\to +\infty$, we deduce \eqref{mom_t_H1}; 
this completes the proof of Proposition \ref{prop_mom_t_H1}. 
\end{proof}

 \noindent {\bf Acknowledgements.}   
 Annie~Millet's research has been conducted within the FP2M federation (CNRS FR 2036). 

\noindent{\bf Declarations.}

\noindent{\bf Funding.}  Hakima Bessaih was partially supported by  NSF grant DMS: 2147189.

\noindent {\bf  Conflicts of interest.} The authors have no conflicts of interest to declare that are
 relevant to the content of this article.

\noindent{\bf  Availability of data and material.} Data sharing not applicable to this article as
no datasets were generated or analysed during the current study.

\end{document}